\newcommand{\commentout}[1]{}
\newcommand{\R}{\mathbb{R}}
\newcommand{\N}{\mathbb{N}}
\newcommand {\al} {\alpha}
\newcommand {\e}  {\varepsilon}
\newcommand {\Chi} {{\bf \raise 2pt \hbox{$\chi$}} }
\newcommand {\F} { {\mathcal F} }
\newcommand {\U} { {\mathcal U} }
\newcommand {\M} { {\mathcal M} }
\newcommand {\f}   {\frac}
\newcommand {\p}   {\partial}
\newcommand{\dis}{\displaystyle}
\newcommand{\beq}{\begin{equation}}
\newcommand{\beqa} {\begin{array}{rl}}
\newcommand{\eeq}{\end{equation}}
\newcommand{\eeqa}{\end{array}}
\newtheorem{theorem}{Theorem}[section]
\newtheorem{lemma}[theorem]{Lemma}
\newtheorem{remark}[theorem]{Remark}
\newtheorem{proposition}[theorem]{Proposition}
\newtheorem{corollary}[theorem]{Corollary}
\newtheorem*{theoremCCM}{Theorem \cite{CCM}}
\title{\LARGE Long-time asymptotics for nonlinear growth-fragmentation equations}
\author{P. Gabriel \thanks{Universit\'e Pierre et Marie Curie-Paris 6, UMR 7598 LJLL, BC187, 4, place de Jussieu, F-75252 Paris cedex 5, France. Email: gabriel@ann.jussieu.fr}}
\date{\today}
\begin{document}
\maketitle
\pagestyle{plain}
\pagenumbering{arabic}

\begin{abstract}
We are interested in the long-time asymptotic behavior of growth-fragmentation equations with a nonlinear growth term.
We present examples for which we can prove either the convergence to a steady state or conversely the existence of periodic solutions.
Using the General Relative Entropy method applied to well chosen self-similar solutions, we show that the equation can ``asymptotically'' be reduced to a system of ODEs.
Then stability results are proved by using a Lyapunov functional, and the existence of periodic solutions is proved with the Poincar\'e-Bendixon theorem or by Hopf bifurcation.
\end{abstract}

\noindent{\bf Keywords:} size-structured populations, growth-fragmentation processes,
eigenproblem, self-similarity, relative entropy, long-time asymptotics,
stability, periodic solution, Poincar\'e-Bendixon theorem, Hopf bifurcation.

\

\noindent{\bf AMS Class. No.} 35B10, 35B32, 35B35, 35B40, 35B42, 35Q92, 37G15, 45K05, 92D25

\

\section{Introduction}

We are interested in growth models which take the form of a {\it mass preserving} fragmentation equation complemented with a transport term.
Such models are used to describe the evolution of a population in which each individual grows and splits or divides.
The individuals can be for instance cells \cite{Bekkal1, Bekkal2, GW2, MetzDiekmann} or polymers \cite{biben, destaing, GabrielTine} and are
structured by a variable $x>0$ which may be size \cite{DMZ,DPZ}, label \cite{Banks2}, protein content \cite{Doumic, Magal}, proliferating parasites content \cite{Bansaye2}, etc.
More precisely, we denote by $u = u(t,x) \geq 0$ the density of individuals of structured variable $x$ at time $t,$
and we consider that the time dynamics of the population are given by the following equation:
\beq\label{eq:frag-drift}\left\{\begin{array}{l}
\f{\p}{\p t}u(t,x) + \f{\p}{\p x} (\tau(t,x) u(t,x))+\mu(t,x)u(t,x) = (\F u)(t,x),\qquad t\geq0,\ x>0,
\vspace{.2cm}\\
u(t,x=0)=0,\qquad t\geq0,
\vspace{.1cm}\\
u(t=0,x)=u_0(x)\geq0,\qquad x>0,
\end{array}\right.\eeq
where $\F$ is a {\it mass conservative} fragmentation operator
\beq\label{eq:fragop}(\F u)(t,x)=\int_x^\infty b(t,y,x)u(t,y)\,dy-\beta(t,x)u(t,x).\eeq
The {\it mass conservation} for the fragmentation operator requires the relation
\beq\label{as:b}\beta(t,x)=\int_0^x\f yx\, b(t,x,y)\,dy.\eeq
The coefficient $\beta(t,y)\geq0$ represents the rate of splitting for a particule of size $y$ at time $t$ and $b(t,y,x)\geq0$ represents the formation rate of a particule of size $x\leq y$ by the fragmentation.
The velocity $\tau(t,x)>0$ in the transport term represents the growth rate of each individual, and $\mu(t,x)\geq0$ is a degradation or death term.

\

We consider that the time dependency of $\tau$ and $\mu$ is of the form
\beq\label{as:timedependency}\tau(t,x)=V(t)\tau(x)\qquad\text{and}\qquad\mu(t,x)=R(t)\mu(x),\eeq
and moreover that the size dependency is a powerlaw:
\beq\label{as:powerlaw}\tau(x)=\tau x^\nu\qquad\text{and}\qquad\mu(x)\equiv\mu.\eeq
The choice of the coefficients $V(t)$ and $R(t)$ depends on the cases we want to analyse.
We give below four examples in which they are nonlinear terms or periodic controls.
The fragmentation coefficients are assumed to be time-independent and to have a \emph{self-similar} structure
\beq\label{as:selfsimfrag}\beta(t,x)=\beta x^\gamma\qquad\text{and}\qquad b(t,x,y)=\f{\beta(x)}{x}\,\kappa\left(\f yx\right),\eeq
where $\kappa$ a nonnegative measure on $[0,1].$
In the sequel we denote by ${\mathcal F}_\gamma$ the fragmentation operator associated to coefficients satisfying these conditions.
Additionally, to obtain convergence results for nonlinear problems, we will sometimes assume that $\kappa$ is a functional kernel, bounded above and below:
\beq\label{as:boundfragker}\exists\,\underline\kappa,\overline\kappa>0,\quad\forall\,z\in[0,1],\quad \underline\kappa\leq\kappa(z)\leq\overline\kappa.\eeq
Notice that for $b$ as in Assumption~\eqref{as:selfsimfrag}, the quantity
$$n_0:=\int_0^1 \kappa(z)\,dz$$
represents the mean number of fragments produced by the fragmentation of an individual.
Moreover relation~\eqref{as:b} becomes $\int_0^1 z\,\kappa(z)\,dz=1,$ which enforces $n_0>1.$
If $\kappa$ is symmetric ($\kappa(z)=\kappa(1-z)$), we even necessarily have $n_0=2.$

\

Now we state our main results concerning different choices for $V(t)$ and $R(t).$
First we investigate the nonlinear growth-fragmentation equations corresponding to the case when $V$ and/or $R$ are functions of the solution $u(t,x)$ itself.
We also consider a model of \emph{polymerization} in which the transport term depends on $u$ and on a solution to an ODE coupled to the growth-fragmentation equation.
The long-time behavior of these equations is investigated under the assumption that $\tau(x)$ is linear (\emph{i.e.} $\nu=1)$ and $\beta$ increasing (\emph{i.e.} $\gamma>0).$
We finish with a study of the long-time asymptotics in the case when $V$ and $R$ are known periodic controls.

\

\paragraph{\bf Example 1. Nonlinear drift-term.}
We consider that the death rate is time independent ($R\equiv1$) and that the transport term depends on the solution itself through the nonlinearity
\beq\label{eq:nonlinear}\f{\p}{\p t} u(t,x) = - f\left(\int x^p u(t,x)\,dx\right) \f{\p}{\p x} \bigl(x\,u(t,x)\bigr) - \mu\, u(t,x) + {\mathcal F}_\gamma u(t,x),\eeq
where $f:\R_+\to\R_+$ is a continuous function which represents the influence of the weighted total population $\int x^pu(t,x)\,dx\ (p>0)$ on the growth process.
Such weak nonlinearities are common in structured populations (see \cite{Hoppensteadt,Iannelli,Webb} for instance).
The stability of the steady states for related models has already been investigated (see \cite{DiekGyllMetz, farkas, Farkas2, Farkas3, MPR}),
but never for the growth-fragmentation equation with the nonlinearities considered here.
We prove in Section~\ref{sec:nonlineardrift} convergence and nonlinear stability results for Equation~\eqref{eq:nonlinear}
in the functional space ${\mathcal H}:=L^2((x+x^r)dx)$ for $r$ large enough, and more precisely in its positive cone denoted by ${\mathcal H}^+.$
These results are stated in the two following theorems. They require that $f$ is continuous and satisfies
\beq\label{as:f}{\mathcal N}:=\left\{I;\ f(I)=\mu\right\}\ \text{is a finite set}\qquad and\qquad \limsup_{I\to\infty}f(I)<\mu.\eeq

\medskip

\begin{theorem}[Convergence]\label{th:nonlindrift:convergence}
Assume that $f$ satisfies Assumption~\eqref{as:f}, that $\gamma\in(0,2],$ and that the fragmentation kernel $\kappa$ satisfies Assumption~\eqref{as:boundfragker}.
Then the number of positive steady states for Equation~\eqref{eq:nonlinear} is equal to $\sharp{\mathcal N}$
and any solution with an initial distribution $u_0\in{\mathcal H}^+$ either converges to one of these steady states or vanishes.
\end{theorem}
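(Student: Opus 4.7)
First, I would classify the positive stationary solutions of~\eqref{eq:nonlinear}. If $U$ is such a solution and $I^{\ast}:=\int x^{p}U\,dx$, then testing the stationary equation against the weight $x$ makes the fragmentation term vanish by the mass-conservation identity~\eqref{as:b} and leaves $(f(I^{\ast})-\mu)\int xU\,dx=0$, which forces $f(I^{\ast})=\mu$, i.e.\ $I^{\ast}\in\mathcal N$. For each such $I^{\ast}$, the remaining equation is the Perron eigenproblem for the linear growth-fragmentation with $\tau(x)=\mu x$ and death rate $\mu$ at eigenvalue $0$; under Assumption~\eqref{as:boundfragker} this admits a unique positive eigenfunction $N$ up to scaling, and the constraint $\int x^{p}U\,dx=I^{\ast}$ then fixes the scaling uniquely. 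Hence the positive steady states are in bijection with $\mathcal N$.

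\textbf{Second}, I would reduce the dynamics to a scalar ODE coupled with a profile convergence. Multiplying~\eqref{eq:nonlinear} by $x$ and integrating (the fragmentation contribution drops out via~\eqref{as:b}, and $\nu=1$ makes the drift produce just $f(I)$ times the first moment), the first moment $m(t):=\int xu\,dx$ satisfies the \emph{exact} identity $\dot m=(f(I(t))-\mu)m$. Combined with Assumption~\eqref{as:f} and energy estimates in $\mathcal H$, this yields uniform-in-time control of $u$ in $\mathcal H$. To promote this to a convergence result, I would apply the General Relative Entropy method after a self-similar change of variables that absorbs the nonlinear drift into the spatial variable—made possible by the linearity of $\tau$, whose characteristics are exponentials in $t$. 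The rescaled profile then solves a non-autonomous but \emph{linear} equation, to which GRE associates a dissipative Lyapunov functional; passing to the limit shows $u(t,\cdot)/m(t)\to\widetilde N$ in $\mathcal H$, where $\widetilde N$ is the Perron eigenfunction normalized by $\int x\widetilde N\,dx=1$.

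\textbf{Third}, I would close the loop by plugging the profile convergence back into the moment identity. Writing $c:=\int x^{p}\widetilde N\,dx>0$, one has $I(t)\sim cm(t)$, hence asymptotically $\dot m=(f(cm)-\mu)m$. Its equilibria are $\{0\}\cup\{I^{\ast}/c:I^{\ast}\in\mathcal N\}$, finitely many by~\eqref{as:f}; a scalar autonomous ODE on $\R^{+}$ with finitely many equilibria has no periodic orbits, so every trajectory converges to one of them. Translating back to $u$, the solution either vanishes (if $m\to0$) or converges in $\mathcal H$ to one of the $\sharp\mathcal N$ positive steady states produced in the first step.

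\textbf{The main obstacle} I expect lies in the GRE step: the classical entropy identity is built for an autonomous linear equation with a fixed Perron eigenpair, whereas here the coefficient $f(I(t))$ fluctuates with the solution. The two ingredients that should make the argument go through are the linearity $\nu=1$, which lets one eliminate the drift entirely by a time-dependent dilation of the $x$-variable, and the choice of $\mathcal H=L^{2}((x+x^{r})dx)$ with $r$ large, which provides the tail control needed to rule out loss of mass at $0$ or $\infty$. Establishing compactness of the $\omega$-limit set in $\mathcal H$, so that convergence of the scalar variable $m$ lifts to $\mathcal H$-convergence of $u$, will be the central technical point.
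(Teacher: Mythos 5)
Your first step (classifying the steady states) is correct and matches the paper, and the identity $\dot m=(f(I(t))-\mu)m$ is also correct. The gap lies in your second and third steps, and it is structural: for $p\neq 1$ the reduced dynamics on the eigenmanifold are genuinely two-dimensional, not one-dimensional, and your argument collapses them to a scalar ODE by an assumption you cannot justify at that stage.

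Concretely, you assert that GRE after the self-similar change of variables yields $u(t,\cdot)/m(t)\to\widetilde N$ in $\mathcal H$ for a \emph{fixed} profile $\widetilde N$, and hence $I(t)\sim c\,m(t)$. What the rescaling of Theorem~\ref{th:nu1} actually produces is a function $v(h(t),x)=W^k(t)\,u(t,W^k(t)x)\,e^{\mu(t-h(t))}$ solving an autonomous \emph{linear} growth-fragmentation equation, so GRE (or the spectral gap of \cite{CCM}) gives $v(t,\cdot)\to\varrho_0\,\U$. Undoing the rescaling, this says $u(t,\cdot)$ approaches the two-parameter family $\mathcal E=\{Q\,\U(W\mu;\cdot)\}$ — not a fixed ray. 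In particular $I(t)/m(t)\propto W(t)^{k(p-1)}$, which is constant only if $p=1$ or if $W(t)$ converges, and the convergence of $W$ is precisely what remains to be shown. Your step ``$I(t)\sim c\,m(t)$'' therefore presupposes the conclusion. (When $p=1$ your reduction to $\dot m=(f(m)-\mu)m$ is exact and your argument goes through; the paper notes this special case explicitly.)

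Once one accepts that the asymptotic dynamics live on a $2$D system in $(W,Z)$ with $Z=W^{kp}Q$, one can no longer invoke ``a scalar autonomous ODE has no periodic orbits'': planar systems \emph{can} have limit cycles, and indeed the paper's own Section~\ref{sec:oscilld2} produces exactly such cycles when the death term is also made nonlinear. Ruling out periodic orbits for Equation~\eqref{eq:ODEWZ2} is the real technical core of the paper's proof: one builds a Lyapunov functional $L(W,Z)=2k\mu\,G(W)+(\al_+^2+\al_-^2)F(Z)$ with $G(W)=W-1-\ln W$ and $F(Z)=\int_1^Z(\mu-f_p(z))\,dz/z$, and uses the elementary but nontrivial inequality of Lemma~\ref{lm:lyapunov} to show $\f{d}{dt}L\le -D(W,Z)<0$ off equilibria. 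Your proposal skips this entirely, replacing it with the (unavailable) profile convergence. A second, more minor, point: even granting $\e(t)\to0$, the paper still needs $L$ and $D$ coercive to keep $(W,Z)$ bounded before it can pass to the limit; your plan leaves the boundedness of the trajectory and the lift from moment convergence to $\mathcal H$-convergence implicit, and both depend on the $2$D Lyapunov structure you would be lacking.
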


\medskip

\begin{theorem}[Local stability]\label{th:stability}
Assume that $f\in{\mathcal C}^1(\R_+)$ satisfies Assumption~\eqref{as:f}, that $\gamma\in(0,2],$ and that the fragmentation kernel $\kappa$ satisfies Assumption~\eqref{as:boundfragker}.
Then the trivial steady state is locally exponentially stable if $f(0)<\mu$ and $p\geq1,$ and unstable if $f(0)>\mu.$
Any nontrivial steady state $u_\infty$ is locally asymptotically stable if $f'\left(\int x^pu_\infty(x)\,dx\right)<0,$
locally exponentially stable if additionally $\kappa\equiv2,$ and unstable if $f'\left(\int x^pu_\infty(x)\,dx\right)>0.$
\end{theorem}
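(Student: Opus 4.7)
The plan is to linearize equation~\eqref{eq:nonlinear} around each steady state, read the local linear stability from the spectrum of the linearization, and close the nonlinear argument with a Lyapunov functional inspired by the General Relative Entropy.

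\medskip
\textbf{Trivial steady state.} Near $u\equiv0$, equation~\eqref{eq:nonlinear} linearizes to the linear growth-fragmentation equation with growth rate $V=f(0)$ and death rate~$\mu$. A direct computation using the mass-conservation identity $\int z\kappa(z)\,dz=1$ shows that $\mathcal F_\gamma^*x=0$, so $\phi(x)=x$ is a positive dual principal eigenvector with associated eigenvalue $f(0)-\mu$, and thus the linearized operator has principal eigenvalue $f(0)-\mu$. When $f(0)>\mu$, the first moment $M_1(t)=\int xu(t,x)\,dx$ of any small nonnegative initial datum satisfies $\partial_tM_1\approx(f(0)-\mu)M_1$, which grows exponentially and rules out stability. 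When $f(0)<\mu$, the $L^2$-GRE estimate of~\cite{CCM} gives exponential decay of the linear semigroup in~$\HH$; the nonlinear remainder $(f(I(u))-f(0))\,\partial_x(xu)$ is at least quadratic in $\|u\|_\HH$, since $I(u)=\int x^pu\,dx$ is controlled by~$\|u\|_\HH$ precisely when $p\ge1$ (Cauchy--Schwarz against the weight $x+x^r$), and a Gronwall bootstrap closes the local exponential stability.

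\medskip
\textbf{Nontrivial steady state.} At~$u_\infty$ we have $f(I_\infty)=\mu$, so $u_\infty$ spans the kernel of
\[L_0v:=-\mu\,\partial_x(xv)-\mu v+\mathcal F_\gamma v,\]
and $\phi(x)=x$ still spans the dual kernel. Linearizing~\eqref{eq:nonlinear} at~$u_\infty$ yields the rank-one perturbation
\[
Lv\;=\;L_0v-f'(I_\infty)\Bigl(\int x^pv\,dx\Bigr)\,\partial_x(xu_\infty).
\]
Integration by parts gives $\langle\phi,\partial_x(xu_\infty)\rangle=-M_1$ with $M_1=\int xu_\infty\,dx=\langle\phi,u_\infty\rangle$, so the standard first-order shift of the simple eigenvalue~$0$ under rank-one perturbation is
\[
\lambda(L)\;=\;\frac{\langle\phi,(L-L_0)u_\infty\rangle}{\langle\phi,u_\infty\rangle}\;=\;f'(I_\infty)\,I_\infty.
\]
When $f'(I_\infty)>0$, this produces instability via the positive dual principal eigenvector of~$L$. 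When $f'(I_\infty)<0$ and additionally $\kappa\equiv2$, the quantitative spectral gap of~$L_0$ from~\cite{CCM} is preserved by the compact rank-one perturbation, so~$L$ generates an exponentially stable semigroup and a standard bootstrap transfers this to the nonlinear equation. For a general~$\kappa$, no uniform gap is available and I would instead introduce a Lyapunov functional of the form
\[
\Phi(u)\;=\;\int\phi\,u_\infty\,H\!\left(\tfrac{u}{u_\infty}\right)dx+\tfrac12\,c\,(I(u)-I_\infty)^2,
\]
with~$H$ a convex GRE entropy and $c>0$ chosen so that the quadratic rank-one cross coupling is absorbed by the linear entropy dissipation plus the negative ODE dissipation $f'(I_\infty)(I(u)-I_\infty)^2$, yielding local asymptotic stability.

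\medskip
\textbf{Main difficulty.} The delicate point is constructing the Lyapunov functional in the general-$\kappa$ regime: the GRE dissipation vanishes to infinite order at~$u_\infty$ and must be carefully balanced against the quadratic ODE dissipation in order to dominate the rank-one cross term. The assumption $\kappa\equiv2$ bypasses this balance through the quantitative spectral gap and is precisely what upgrades asymptotic stability to exponential stability.
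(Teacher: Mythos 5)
Your approach is genuinely different from the paper's: you linearize the PDE at each steady state and read stability from the spectrum, whereas the paper never linearizes the PDE. Instead it exploits the invariant eigenmanifold ${\mathcal E}$ of Section~\ref{ssec:self-sim} to reduce the on-manifold dynamics to the planar system~\eqref{eq:ODEWZ2}, builds the explicit Lyapunov function $L(W,Z)=2k\mu\,G(W)+(\al_+^2+\al_-^2)F(Z)$ whose sublevel sets yield local stability, and controls the off-manifold error by the attractivity estimate~\eqref{eq:u-QU} derived from \cite{CCM}. This avoids linearized spectral analysis altogether and simultaneously delivers the global convergence of Theorem~\ref{th:convergence}.

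For the nontrivial steady state your argument has a real gap. The first-order shift formula
\[
\lambda(L)=\frac{\langle\phi,(L-L_0)u_\infty\rangle}{\langle\phi,u_\infty\rangle}
\]
is valid for a \emph{small} rank-one perturbation; here $B=L-L_0$ is of fixed $O(1)$ size, with no small parameter, so this gives at best a heuristic. The genuine eigenvalue condition, obtained by reducing $(L_0+B-\lambda)v=0$ to a scalar characteristic equation $1=f'(I_\infty)\langle x^p,(L_0-\lambda)^{-1}\partial_x(xu_\infty)\rangle$, shows the perturbed eigenvalue depends on the whole resolvent and on $p$ through the regular part, not just through $f'(I_\infty)I_\infty$; this is consistent with the fact that the Jacobian of the reduced system~\eqref{eq:ODEWZ2} has trace and determinant depending explicitly on $p$ and $\mu/k$. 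For instability ($f'(I_\infty)>0$) you could still save the argument via an intermediate-value argument on that characteristic equation, but for stability ($f'(I_\infty)<0$) you must rule out all other eigenvalues crossing, and you also need a nonlinear-to-linear instability/stability transfer which is not automatic for non-analytic semigroups; none of this is addressed.

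Your reading of the role of $\kappa\equiv2$ is also off. The spectral gap of \cite{CCM} stated in Section~\ref{ssec:GRE} holds for \emph{any} bounded kernel satisfying~\eqref{as:boundfragker}, so there is no dichotomy ``gap only for $\kappa\equiv2$''. What $\kappa\equiv2$ actually buys, and what the paper uses, is the explicit eigenvector formula~\eqref{eq:explicitU}, which lets one estimate the dilation remainder $\int\bigl(\U(W^{-k}\mu^{-k}x)-\U(\mu^{-k}x)\bigr)^2(x+x^r)\,dx$ by $C(W-1)^2$ through the auxiliary function $\psi_r(W)$, and thereby upgrade the local entropy--entropy dissipation inequality~\eqref{eq:entropy-entropy} to a quantitative exponential rate. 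Consequently the ``GRE dissipation vanishes to infinite order'' picture you invoke is not the obstruction; the obstruction is the lack of an explicit profile $\U$ for general $\kappa$. Finally, the Lyapunov functional $\Phi(u)=\int\phi\,u_\infty H(u/u_\infty)+\tfrac12 c(I(u)-I_\infty)^2$ you sketch is only a plan: the claim that one can choose $c$ to absorb the cross term is not substantiated and is precisely where the paper instead invokes the planar $L(W,Z)$ together with the $\e(t)$ error control of~\eqref{eq:ep}, \eqref{eq:Lyapunovep}.
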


\medskip

A positive steady state $u_\infty$ satifies $f(I_\infty)=\mu,$ where $I_\infty=\int_0^\infty x^pu_\infty(x)\,dx,$ and its local stability depends on the sign of $f(I)-\mu$ around $I_\infty.$
Indeed, if we start with a initial distribution $u_0$ close to $u_\infty$ and if we freeze the growth term $f\bigl(\int_0^\infty x^pu_0(x)\,dx\bigr)$ in Equation~\eqref{eq:nonlinear},
we obtain a linear growth-fragmentation equation with a principal eigenvalue $\Lambda_0=f\bigl(\int_0^\infty x^pu_0(x)\,dx\bigr)-\mu$ (see Section~\ref{ssec:self-sim}).
Thus if $f'(I_\infty)<0$ for instance, the eigenvalue $\Lambda_0$ is positive for an initial data with a $p\,$-moment less than $I_\infty,$
and negative for an initial $p\,$-moment greater than $I_\infty.$
So $u_\infty$ is expected to be stable.

The method of proof combines several arguments.
First it uses the General Relative Entropy principle introduced by \cite{MMP1,MMP2,BP} for the linear case.
Secondly it reduces the system to a set of ODEs which has the same asymptotic behavior as Equation~\eqref{eq:nonlinear}.
Then we build a Lyapunov functional for this reduced system.
Therefore our result extends several stability results proved for the nonlinear renewal equation in \cite{M3,PTum,Tumuluri2} to the case of the growth-fragmentation equation.

As an immediate consequence of the two theorems, we have the following corollary.

\medskip

\begin{corollary}[Global stability]\label{co:stability}
For $\gamma\in(0,2]$ and under Assumption~\eqref{as:boundfragker}, if $f\in{\mathcal C}^1(\R_+)$ satisfies \eqref{as:f} with ${\mathcal N}$ a singleton,
then there is a unique nontrivial steady state $u_\infty.$
If additionally $f'\left(\int x^pu_\infty(x)\,dx\right)<0,$ then it is globally asymptotically stable in ${\mathcal H}^+\backslash\{0\}.$
\end{corollary}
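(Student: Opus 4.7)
The plan is to deduce the corollary from the two preceding theorems, supplementing them with a short mass-balance computation to eliminate the vanishing branch of the dichotomy in Theorem~\ref{th:nonlindrift:convergence}.

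First, uniqueness of the nontrivial steady state is immediate: Theorem~\ref{th:nonlindrift:convergence} says the number of positive steady states equals $\sharp\mathcal{N}=1$. I call it $u_\infty$ and set $I_\infty := \int x^p u_\infty(x)\,dx$. Next I will extract the sign of $f(0)-\mu$ from the hypotheses. Since $\mathcal{N}=\{I_\infty\}$, continuity and $\limsup_{I\to\infty}f(I)<\mu$ force, by the intermediate value theorem, $f<\mu$ on $(I_\infty,\infty)$, while on $[0,I_\infty)$ the function $f-\mu$ has constant sign. The assumption $f'(I_\infty)<0$ puts $f$ above $\mu$ just to the left of $I_\infty$, so $f>\mu$ on $[0,I_\infty)$, and in particular $f(0)>\mu$. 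Theorem~\ref{th:stability} then gives that the trivial state is unstable and that $u_\infty$ is locally asymptotically stable.

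To globalize, I must exclude the possibility that some nonzero initial datum produces a solution that vanishes. The idea is to test \eqref{eq:nonlinear} against $x$: mass conservation of $\mathcal{F}_\gamma$ from Assumption~\eqref{as:b}, together with an integration by parts in the transport term, will produce
\beq\label{eq:plan:mass}
\frac{d}{dt}\int xu(t,x)\,dx=\bigl(f(I(t))-\mu\bigr)\int xu(t,x)\,dx,\qquad I(t):=\int x^p u(t,x)\,dx.
\eeq
Denote $M(t):=\int xu(t,x)\,dx$. If $u_0\in\mathcal{H}^+\setminus\{0\}$ then $M(0)>0$ by positivity, so \eqref{eq:plan:mass} gives $M(t)>0$ for all $t$. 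Should $u(t,\cdot)\to 0$ in $\mathcal{H}$, Cauchy--Schwarz (for $r$ large enough so that the weights $x$ and $x^p$ are controlled by $(x+x^r)^{1/2}$ in $L^2$) forces $M(t),I(t)\to 0$; then $f(I(t))\to f(0)>\mu$ by the sign analysis above, and \eqref{eq:plan:mass} makes $M(t)$ grow exponentially, contradicting $M(t)\to 0$. Hence every such solution converges to $u_\infty$.

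The main obstacle is precisely this last step. Theorem~\ref{th:nonlindrift:convergence} is dichotomous, and the mere instability of $0$ given by Theorem~\ref{th:stability} does not by itself preclude trajectories from approaching $0$; an independent argument is required. Mass balance is a natural choice because $\mathcal{F}_\gamma$ is conservative and the sign of $f(I(t))-\mu$ near $I=0$ is controlled by the analysis already performed.
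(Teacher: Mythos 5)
Your proposal is correct, and it does something the paper does not bother to spell out. The paper presents the corollary as an ``immediate consequence'' of Theorems~\ref{th:nonlindrift:convergence} and \ref{th:stability}; the reason this works is hidden inside the proof of Theorem~\ref{th:convergence}: when $f(0)>\mu$ the Lyapunov functional $L(W,Z)=2k\mu\,G(W)+(\al_+^2+\al_-^2)F(Z)$ is coercive as $Z\to 0^+$ (the integrand $(\mu-f_p(z))/z$ diverges), which forbids the reduced trajectory $(W,Z)$ from approaching the boundary $Z=0$, hence forbids the vanishing branch of the dichotomy. You were right to be suspicious that the theorem \emph{statements} alone leave this open --- a trivial state can be ``unstable'' yet still attract a thin set of trajectories --- and your mass-balance computation $\frac{d}{dt}\int xu=(f(I(t))-\mu)\int xu$ is an exact identity (with $\alpha=1$, the fragmentation contributes $(c_1-1)\beta M_{1+\gamma}=0$ since $c_1=\int_0^1 z\kappa(z)\,dz=1$, and $\int x\,\partial_x(xu)\,dx=-\int xu\,dx$). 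Your deduction of $f(0)>\mu$ from $\mathcal N$ singleton, $\limsup_\infty f<\mu$, and $f'(I_\infty)<0$ is sound (it uses implicitly that $I_\infty>0$, which holds since the associated steady state is nontrivial), and once $f(I(t))\to f(0)>\mu$ the first moment $M(t)>0$ cannot decay, so the vanishing branch is excluded. Compared to the paper's (implicit) route, your argument is more self-contained: it closes the gap by a two-line moment computation rather than by re-opening the Lyapunov analysis, at the modest cost of one additional Cauchy--Schwarz step to pass from $\mathcal H$-convergence to convergence of the moments $M$ and $I$. Both roads lead to the same conclusion; yours is the more readable patch, and it also makes explicit what Theorem~\ref{th:nonlindrift:convergence} would need to assert (namely: if $f(0)>\mu$, no nonzero solution vanishes) for the ``immediate consequence'' phrasing to be airtight.
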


\

\paragraph{\bf Example 2. Nonlinear drift and death terms.}
We can also treat several nonlinearities as in
\beq\label{eq:nonlinear2}\f{\p}{\p t} u(t,x) = - f\left(\int x^p u(t,x)\,dx\right) \f{\p}{\p x} \bigl(x\,u(t,x)\bigr) - g\left(\int x^q u(t,x)\,dx\right) u(t,x) + {\mathcal F}_\gamma u(t,x).\eeq
In this case, we show that persistent oscillations can appear.
The existence of non-trivial periodic solutions for structured population models is a very interesting and difficult problem.
It has been mainly investigated for age structured models with nonlinear renewal and/or death terms, but there are very few results
\cite{ACHQ,Bertoni,Cushing3,KostovaLi,MagalRuan,MagalRuan2,Pruss1983,Swart,Tumuluri2}.
For Equation~\eqref{eq:nonlinear2}, we exhibit functions $f$ and $g$ for which convergence to periodic solutions can be proved.

Consider differentiable increasing functions $f$ and $g$ such that
\beq\label{as:fandg}f(0)>g(0)=0\qquad \text{and}\qquad f(\infty)<g(\infty)=\infty,\eeq
and which satisfy one of the two following conditions:
\begin{align}
\label{as:example1}&\bullet\quad\exists\, C>0,\ \forall x\geq0,\ g(x)\leq Cxg'(x),\ \text{and}\ f(1)=g(1)=1,\\
\text{or}\quad&\nonumber\\
\label{as:example2}&\bullet\quad\forall x\geq0,\ g(x)=x,\ f\ \text{has a unique fixed point}\ x_0,\ \text{and}\ f'(x_0)<1.\quad
\end{align}
Then we have the following convergence result.

\medskip

\begin{theorem}\label{th:nonlindriftdeath}
Assume that $f$ and $g$ satisfy Assumption~\eqref{as:fandg} and either~\eqref{as:example1} or~\eqref{as:example2},
that $\gamma\in(0,2],$ and that $\kappa$ satifies Assumption~\eqref{as:boundfragker}.
Then there exist parameters $p$ and $q$ for which we can find an open set ${\mathcal V}\subset{\mathcal H}^+$ with the property that
any solution to Equation~\eqref{eq:nonlinear2} with an initial distribution $u_0\in{\mathcal V}$ converges to a nontrivial periodic solution.
\end{theorem}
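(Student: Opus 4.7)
The plan is to follow the same reduction-plus-dynamical-systems strategy as for Theorem~\ref{th:nonlindrift:convergence}, but to exploit the presence of two independent nonlinearities so that the asymptotic reduced system is planar; the Poincar\'e--Bendixon theorem then applies. Placing the nonlinearity in both the drift (through $f$) and in the death term (through $g$) produces two scalar control parameters, namely $I(t):=\int x^p u(t,x)\,dx$ and $J(t):=\int x^q u(t,x)\,dx$, which will serve as the phase variables of the reduced ODE.

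First I would rescale the solution along the self-similar profile associated with the linear operator ${\mathcal F}_\gamma$ with $\tau(x)=x$, exactly as in the proof of Theorem~\ref{th:nonlindrift:convergence}. Setting $V(t)=f(I(t))$ and $R(t)=g(J(t))$, equation~\eqref{eq:nonlinear2} falls into the class~\eqref{eq:frag-drift}--\eqref{as:timedependency}. Applying the General Relative Entropy to the rescaled equation I would show that the shape of $u(t,\cdot)$ is exponentially attracted in ${\mathcal H}$ to the Perron eigenfunction $\U$, while the scalar controls $(I(t),J(t))$ satisfy an autonomous planar system
\[
\dot I \;=\; F(I,J), \qquad \dot J \;=\; G(I,J),
\]
in which $F$ and $G$ are computable in closed form from $f,g,p,q$, the Perron eigenvalue, and a few moments of $\U$. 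Once this slaving is justified, the effective dynamics of~\eqref{eq:nonlinear2} in ${\mathcal H}^+$ is genuinely two-dimensional.

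The second step is a concrete design of $f,g,p,q$ so that the planar field $(F,G)$ admits an attracting limit cycle. I would take $f,g$ continuously differentiable with coercivity at infinity in the spirit of Assumption~\eqref{as:f}, yielding a unique interior equilibrium $(I_*,J_*)$ together with a bounded forward-invariant annulus in the open positive quadrant; and I would then tune the exponents $p,q$ (or the slopes of $f,g$ at the equilibrium) so that the Jacobian of $(F,G)$ at $(I_*,J_*)$ has a pair of complex eigenvalues with positive real part, either directly via Routh--Hurwitz or as the outcome of a Hopf bifurcation. Since the annulus contains no other equilibrium, Poincar\'e--Bendixon provides a periodic orbit $\Gamma$. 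The open set ${\mathcal V}\subset{\mathcal H}^+$ of the theorem is then the preimage, under $u_0\mapsto(I(0),J(0))$, of the basin of $\Gamma$, intersected with a ball on which the entropy estimates of Step~1 are quantitative.

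The hard part will not be the Poincar\'e--Bendixon argument itself but the rigorous reduction in Step~1 when $V(t),R(t)$ do not converge: the GRE inequality must supply a dissipation rate uniform in $(V(t),R(t))$ as these range in a compact subset of $\R_+^*\times\R_+^*$, strong enough to absorb the non-autonomous perturbation coming from the oscillating controls. A related subtlety is producing a \emph{genuine} periodic solution of the PDE from the limit cycle of the reduced ODE; I expect this to follow from a fixed-point argument applied to the Poincar\'e return map of $(F,G)$ lifted to ${\mathcal H}^+$, the contraction in the transverse direction being furnished by GRE. These uniform-in-time estimates are what dictate the assumptions $\gamma\in(0,2]$, $\nu=1$, and the weighted $L^2$ setting.
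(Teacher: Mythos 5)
Your plan coincides in all essentials with the paper's proof (Section~\ref{sec:oscilld2}, Theorem~\ref{th:PDEoscill}): reduce the dynamics to a planar autonomous ODE on the invariant eigenmanifold ${\mathcal E}$, design $f$, $g$, $p$, $q$ so that the unique interior equilibrium is a source while trajectories remain in a bounded region away from the coordinate axes, conclude via Poincar\'e--Bendixon, and then extend to an ${\mathcal H}$-neighbourhood of ${\mathcal E}$ using the GRE spectral gap to show that the off-manifold correction to the reduced system vanishes as $t\to\infty$. Two adjustments to your write-up would bring it in line with the actual argument. First, the step you flag as subtle --- producing a genuine periodic PDE solution from the ODE limit cycle --- is in fact immediate and needs no infinite-dimensional Poincar\'e return map or fixed-point argument: ${\mathcal E}$ is exactly invariant under~\eqref{eq:nonlinear2bis}, so a periodic orbit $(W(t),Q(t))$ of the reduced system lifts verbatim to the periodic solution $Q(t)\,\U(W(t);\cdot)$, and the theorem only asserts convergence to this object. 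Second, there is no need for a dissipation rate ``uniform in $(V(t),R(t))$''; the change of variables of Theorem~\ref{th:nu1} converts the equation with time-dependent $(V,R)$ into one with constant coefficients, to which the spectral gap of \cite{CCM} applies directly, and the non-autonomous residual in the reduced ODE is then handled by a simple compactness lemma (Lemma~\ref{lm:ODE}) stating that a bounded trajectory of a vector field with vanishing perturbation shares its $\omega$-limit set with an unperturbed trajectory. Finally, the paper's choice of coordinates $(W,Q)$ (dilation and amplitude) rather than your $(I,J)=(M_p[u],M_q[u])$ is cosmetically cleaner --- the reduced field is closed-form in $(W,Q)$, whereas in $(I,J)$ one must first invert $W=((I M_q[\U])/(J M_p[\U]))^{1/(k(p-q))}$, which fails at $p=q$ --- but the two parametrisations are equivalent for $p\neq q$, which is the regime of interest.
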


\medskip

For $f$ and $g$ satisfying~\eqref{as:example1} or~\eqref{as:example2}, there exists a unique nontrivial steady state to Equation~\eqref{eq:nonlinear2}.
For $p$ and $q$ well chosen, we can prove that this steady state is unstable.
Moreover Assumption~\eqref{as:fandg} ensures that the trivial steady state is unstable and that the solutions are bounded.
Then, taking advantage of the Poincar\'e-Bendixon theorem for a reduced ODE system, we can prove for some initial data the convergence to a nontrivial periodic solution.
The details are given in Section~\ref{sec:oscilld2}.

\

\paragraph{\bf Example 3. The prion equation.}
In Section~\ref{sec:prion}, we are interested in a general so-called prion equation
\begin{equation}
\label{eq:Webblin}
\left\{
\begin{array}{rll}
\dfrac{dV(t)}{dt}&=&\displaystyle - V(t) f\left(\int x^pu\right)\int_{0}^{\infty} x\, u(t,x) \; dx - \delta V(t) + \lambda\,,
\vspace{.2cm}\\
\dfrac{\partial}{\partial t} u(t,x) &=& \displaystyle - V(t)f\left(\int x^pu\right) \frac{\partial}{\partial x} \big(x\, u(t,x)\big) - \mu\, u(t,x) + {\mathcal F}_\gamma u(t,x).
\end{array} \right.
\end{equation}
In this equation, the growth term depends on the quantity $V(t)$ of another population (monomers for the prion proliferation model).
We prove for this system the existence of nontrivial periodic solutions under some conditions on $f.$
Define on $\bigl[0,\lambda\mu^{-k-1}\bigr),$ where $k=\f1\gamma,$ the function
\beq\label{def:g}g(x):=\f{\delta\mu}{\lambda-\mu^{k+1}x},\eeq
and consider a positive differentiable function $f$ satisfying
\beq\label{as:fandgprion}\exists!\,x_0>0\quad s.t.\quad f(x_0)=g(x_0)\quad \text{and moreover}\quad 0<f'(x_0)<g'(x_0).\eeq

\medskip

\begin{theorem}\label{th:prion}
Let $f$ a function satisfying Assumption~\eqref{as:fandgprion} and assume that $\mu\leq(k+\mu^{-1})\delta.$
Then there exist parameters $p>0$ for which Equation~\eqref{eq:Webblin} admits nontrivial periodic solutions.
\end{theorem}

\medskip

In age structured models, nontrivial periodic solutions are usually built using bifurcation theory, particularly by Hopf bifurcation (see \cite{LiuMagalRuan} for a general theorem).
Here we use the same method, considering the power $p$ as a bifurcation parameter.
For $f$ satisfying~\eqref{as:fandgprion}, there exists a unique positive steady state for Equation~\eqref{eq:Webblin},
and this steady state undergoes a supercritical Hopf bifurcation when $p$ increases.

\

\paragraph{\bf Example 4. Perron vs. Floquet.}
Our method in Section~\ref{ssec:self-sim} can also be applied to the situation when $V(t)$ and $R(t)$ are periodic controls:
\beq\label{eq:Floquet}\f{\p}{\p t} u(t,x) + V(t)\f{\p}{\p x} \bigl(\tau x\,u(t,x)\bigr) + R(t)\mu\, u(t,x) =\F_\gamma u(t,x).\eeq
In this case, Theorem~\ref{th:nu1} allows to build a particular solution called the Floquet eigenvector, starting from the Perron eigenvector which corresponds to constant parameters.
Moreover, we can compare the associated Floquet eigenvalue to the Perron eigenvalue.
The results about this problem are stated in Section~\ref{sec:Floquet}.

\

Before treating the different examples, we explain in Section~\ref{sec:generalmethod} the general method used to tackle these problems.
It is based on the main result in Theorem~\ref{th:nu1} and the use of the eigenelements of the growth-fragmentation operator together with General Relative Entropy techniques.
In Sections~\ref{sec:nonlineardrift},~\ref{sec:oscilld2},~\ref{sec:prion}, and \ref{sec:Floquet}, we give the proofs of the results in Examples~1,~2,~3,~and~4 respectively.

\

\section{Technical Tools and General Method}\label{sec:generalmethod}

\subsection{Main Theorem}

The proofs of the main theorems of this paper are based on the following result, which requires one to consider that $\tau(x)$ is linear (\emph{i.e.} $\nu=1).$
In this case, there exists a relation between a solution to Equation~\eqref{eq:frag-drift} with time-dependent parameters $(V_1(t),R_1(t))$ and a solution to the same equation with parameters $(V_2(t),R_2(t)).$
More precisely, we can obtain one from the other by an appropriate dilation.
The following theorem generalizes the change of variable used in \cite{EscoMischler3} to build self-similar solution to the pure fragmentation equation.

\medskip

\begin{theorem}\label{th:nu1}
Suppose that Assumptions~\eqref{as:timedependency}-\eqref{as:selfsimfrag} are satisfied with $\nu=1$ and $\gamma>0.$
For $u_1(t,x)$ a solution to Equation~\eqref{eq:frag-drift} with parameters $(V_1,R_1),$ the function $u_2(t,x)$ defined by
\beq\label{eq:trans}u_2(t,x)=W^{-k}(t)u_1\left(h(t),W^{-k}(t)x\right)e^{\,\mu\int_0^t(W(s)R_1(s)-R_2(s))\,ds},\eeq
with $k=\f1\gamma,$ is a solution to Equation~\eqref{eq:frag-drift} with $(V_2,R_2)$ if $W>0$ and $h$ satisfy
\begin{eqnarray}
\dot W&=&\f{\tau W}{k}(V_2-V_1W) \label{eq:WODE1},\\
\dot h&=&W.\nonumber
\end{eqnarray}
Conversely, if $h:\R_+\to\R_+$ is one to one and if $u_2$ is a solution with $(V_2,R_2),$ then $u_1$ defined by~\eqref{eq:trans} is a solution with $(V_1,R_1).$
\end{theorem}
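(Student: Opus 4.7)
My plan is to verify the claim by direct substitution of formula~\eqref{eq:trans} into Equation~\eqref{eq:frag-drift} with parameters $(V_2,R_2)$, and to check that the remainder vanishes precisely when the ODEs for $W$ and $h$ are satisfied. Write $A(t):=W^{-k}(t)$, $s(t):=h(t)$, $y:=A(t)x$, and let $E(t)$ denote the exponential factor, so that $u_2(t,x)=A(t)\,u_1(s(t),A(t)x)\,E(t)$. Then $\dot s=W$ and, using \eqref{eq:WODE1}, $\dot A=-kW^{-k-1}\dot W=\tau A(V_1W-V_2)$. Computing $\partial_t u_2$ by the chain rule and factoring $AE$ outside yields
\[
\partial_t u_2 \;=\; AE\bigl\{\tau(V_1W-V_2)\,\partial_y(yu_1)+W\,\partial_s u_1+\mu(WR_1-R_2)\,u_1\bigr\},
\]
where I have combined the contribution of $\dot A\,x\,\partial_y u_1$ with the $\dot A\,u_1$ term to form $\partial_y(yu_1)$.

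The crucial step is a scaling identity for the self-similar fragmentation operator. Thanks to Assumptions~\eqref{as:selfsimfrag} and~\eqref{as:powerlaw2}, the change of variables $x'=W^k y'$ inside the integral of~\eqref{eq:fragop} gives, for any $v$,
\[
\mathcal{F}_\gamma\bigl[A\,v(A\cdot)\bigr](x)\;=\;A\,W^{k\gamma}\,(\mathcal{F}_\gamma v)(Ax)\;=\;A\,W\,(\mathcal{F}_\gamma v)(y),
\]
where the final equality uses $k\gamma=1$. This delivers $\mathcal{F}_\gamma u_2(t,x)=AE\,W\,(\mathcal{F}_\gamma u_1)(s,y)$. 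A short direct calculation also shows $\partial_x(xu_2)(t,x)=AE\,\partial_y(yu_1)(s,y)$, so the spatial side of the $(V_2,R_2)$-equation evaluated on $u_2$ equals $AE\{W\mathcal{F}_\gamma u_1-V_2\tau\partial_y(yu_1)-R_2\mu u_1\}$.

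To conclude, I substitute into the expression for $\partial_t u_2$ the fact that $u_1$ solves its own equation with $(V_1,R_1)$, namely $\partial_s u_1=\mathcal{F}_\gamma u_1-V_1\tau\,\partial_y(yu_1)-R_1\mu\,u_1$. Multiplying by $W$ and regrouping, the $V_1$-contributions cancel between $\tau(V_1W-V_2)\partial_y(yu_1)$ and $-W V_1\tau\partial_y(yu_1)$, while the $R_1$-contributions cancel between $\mu(WR_1-R_2)u_1$ and $-WR_1\mu u_1$; what remains matches exactly the spatial side computed above. The boundary condition $u_2(t,0)=0$ is immediate from $u_1(s,0)=0$, and the converse follows by running the same algebra backwards once $h$ is known to be a bijection $\R_+\to\R_+$. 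The only real obstacle is the scaling identity for $\mathcal{F}_\gamma$: the exponent $k\gamma$ is produced by the integral change of variables and only collapses to $1$ because $k=1/\gamma$; without the power law $\beta(x)=\beta x^\gamma$ this exponent would depend on $x'$ and the identity would break down, which is exactly why the hypotheses $\nu=1$ and $\gamma>0$ enter the statement.
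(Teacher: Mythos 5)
Your proof is correct and is precisely the ``easy calculation'' that the paper declares and leaves to the reader: a direct substitution of \eqref{eq:trans} into the $(V_2,R_2)$-equation, using the chain rule together with the scaling identity $\mathcal{F}_\gamma[A\,v(A\,\cdot)](x)=A^{1-\gamma}(\mathcal{F}_\gamma v)(Ax)$ for the self-similar power-law fragmentation operator and the linear ($\nu=1$) transport term, so that the remainder collapses to $W$ times the $(V_1,R_1)$-equation evaluated at $(h(t),W^{-k}x)$. The identification $A^{-\gamma}=W^{k\gamma}=W$ is exactly where $k=1/\gamma$ is used, and your closing remark correctly pinpoints why the power-law assumptions are essential.
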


\medskip

The proof of this result is an easy calculation that we leave to the reader.

\medskip

\begin{remark}\label{rk:Bernoulli}
To check that $h$ is one to one, one can take advantage of the fact that ODE~\eqref{eq:WODE1} is a Bernoulli equation which can be integrated:
\beq\label{eq:Bernoulli}W(t)=\f{W_0\,e^{\f\tau k\int_0^t V_2(s)ds}}{1+W_0\f\tau k\int_0^tV_1(s)e^{\f\tau k\int_0^sV_2(s')ds'}\,ds}.\eeq
\end{remark}

\medskip

To tackle the different examples, we use Theorem~\ref{th:nu1} together with two techniques appropriate for this type of equations.
First we recall the existence of particular solutions to the growth-fragmentation equation in the case of time-independent coefficients.
They correspond to eigenvectors of the growth-fragmentation operator and we can give their self-similar dependency on parameters in the case of powerlaw coefficients (see \cite{PG, CDG}).
This dependency is the starting point which leads to Theorem~\ref{th:nu1} and it allows one to build an invariant manifold for Equation~\eqref{eq:frag-drift} in the case $\nu=1.$
It also provides interesting properties on the moments of the solutions when $\nu\neq1.$
Then we recall results about the General Relative Entropy (GRE) introduced by \cite{MMP1,MMP2,BP} for the growth-fragmentation model.
This method ensures that the particular solutions built from eigenvectors are attractive for suitable norms.

\medskip

\subsection{Eigenvectors and Self-similarity: Existence of an Invariant Manifold}
\label{ssec:self-sim}

When the coefficients of Equation~\eqref{eq:frag-drift} do not depend on time, one can build solutions $(t,x)\mapsto\U(x)e^{\Lambda t}$ by solving the Perron eigenvalue problem
\beq
\label{eq:eigenproblem}
\left \{ \begin{array}{l}
\Lambda\,\U(x)=-\f{\p}{\p x} (\tau(x)\,\U(x)) - \mu(x)\,\U(x) + ({\mathcal F}\,\U)(x), \qquad x \geqslant0,
\\
\\
\tau\U(x=0)=0 ,\qquad \U(x)\geq0, \qquad \int_0^\infty \U(x)dx =1.
\end{array} \right.
\eeq
The existence of such elements $\Lambda$ and $\U$ has been first studied by~\cite{M1,PR} and is proved for general coefficients in \cite{DG}.
The dependency of these elements on parameters is of interest to investigate the existence of steady states for nonlinear problems (see \cite{CL1,CDG}).
In the case of powerlaw coefficients, we can work out this dependency on the frozen transport parameter $V$ and the death parameter $R$ (see \cite{PG}).
Under Assumptions~\eqref{as:powerlaw}-\eqref{as:selfsimfrag}, the necessary condition which appears in \cite{DG,M1} to ensure the existence of eigenelements is $\gamma+1-\nu>0.$
Then we define a {\it dilation parameter}
\beq\label{eq:kdef}k:=\f{1}{\gamma+1-\nu}>0,\eeq
and we have explicit self-similar dependencies
\beq\label{eq:VRdependency}
\Lambda(V,R)=V^{k\gamma}\Lambda(1,0)-R\mu\qquad\text{and}\qquad\U(V;x)=V^{-k}\U(1;V^{-k}x).
\eeq
The eigenvector $\U$ does not depend on $R,$ that is why we do not label it.
Hereafter $\Lambda(1,0)$ and $\U(1;\cdot)$ are denoted by $\Lambda$ and $\U$ for the sake of clarity.
The result of Theorem~\ref{th:nu1} is based on the idea to use these dependencies to tackle time-dependent parameters.
An intermediate result between the formula~\eqref{eq:trans} and the dependencies~\eqref{eq:VRdependency} is given by the following corollary.

\medskip

\begin{corollary}\label{co:nu1}
Under the assumptions of Theorem~\ref{th:nu1}, if $W$ is a solution to
\beq\label{eq:WODE2}\dot W=\f{\Lambda(W,0)}{k}(V-W),\eeq
then
\beq\label{eq:autosim}u(t,x)=\U(W(t);x)e^{\int_0^t\Lambda(W(s),R(s))ds}\eeq
is a solution to Equation~\eqref{eq:frag-drift}.
\end{corollary}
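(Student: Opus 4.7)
The strategy is to derive Corollary~\ref{co:nu1} from Theorem~\ref{th:nu1} by applying the change of variables \eqref{eq:trans} to the stationary Perron solution.

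I would start from the particular solution $u_1(t,x):=\U(x)\,e^{\Lambda t}$ (with $\Lambda:=\Lambda(1,0)$) of \eqref{eq:frag-drift} corresponding to the constant parameters $(V_1,R_1)=(1,0)$; it is a solution by the very definition of the eigenproblem \eqref{eq:eigenproblem}. Theorem~\ref{th:nu1} then produces, for target parameters $(V_2,R_2)=(V(t),R(t))$, the explicit candidate
$$u_2(t,x) = W^{-k}(t)\,\U\!\bigl(W^{-k}(t)\,x\bigr)\,\exp\!\Bigl(\Lambda\, h(t) - \mu\int_0^t R(s)\,ds\Bigr).$$

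Next I would collapse both factors using the self-similar dependencies \eqref{eq:VRdependency}. The prefactor is immediately $\U(W(t);x)$ by the $V$-scaling of $\U$. For the exponent, since $\nu=1$ one has $k\gamma=1$, hence $\Lambda(W(s),0)=W(s)^{k\gamma}\Lambda=W(s)\Lambda$; combined with $h(t)=\int_0^t W(s)\,ds$ (the second equation in Theorem~\ref{th:nu1}), this yields
$$\Lambda\, h(t)-\mu\int_0^t R(s)\,ds=\int_0^t\bigl(\Lambda(W(s),0)-\mu R(s)\bigr)\,ds=\int_0^t\Lambda(W(s),R(s))\,ds,$$
which is exactly the exponent in \eqref{eq:autosim}.

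It remains to match the ODE \eqref{eq:WODE1} for $W$, which here reads $\dot W=\frac{\tau W}{k}(V-W)$, with the prescribed \eqref{eq:WODE2}. By the scaling $\Lambda(W,0)=W^{k\gamma}\Lambda=W\Lambda$, this reduces to the identity $\Lambda(1,0)=\tau$ in the case $\nu=1$. I expect this to be the only step requiring a short calculation, and hence the main obstacle: multiplying \eqref{eq:eigenproblem} (at $V=1$, $R=0$, $\nu=1$) by $x$, integrating over $\R_+$, and using the mass conservation $\int_0^\infty x\,(\F\U)(x)\,dx=0$ (a direct consequence of \eqref{as:b}) leaves $\Lambda\int_0^\infty x\U\,dx=\tau\int_0^\infty x\U\,dx$, hence $\Lambda=\tau$. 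Once this is established, the corollary follows by pure substitution into Theorem~\ref{th:nu1}; no further analytic estimate is needed.
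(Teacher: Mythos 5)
Your proof is correct and follows essentially the same path as the paper: apply Theorem~\ref{th:nu1} to the Perron solution $\U(x)e^{\Lambda t}$ for $(V_1,R_1)=(1,0)$, use the self-similar dependencies~\eqref{eq:VRdependency} to rewrite the result, and close the argument by computing $\Lambda(1,0)=\tau$ via integration of the eigenproblem against $x\,dx$ and mass conservation, which identifies~\eqref{eq:WODE2} with~\eqref{eq:WODE1}. The only difference is cosmetic ordering (the paper establishes $\Lambda=\tau$ first, you do it last); the content and the single nontrivial computation are identical.
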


\

\begin{proof}
For $\nu=1,$ we can compute $\Lambda=\Lambda(1,0)$ by integrating Equation~\eqref{eq:eigenproblem} with $\mu\equiv0$ against $x\,dx.$
We obtain, due to the {\it mass preservation} of ${\mathcal F},$
$$\Lambda\int_0^\infty x\,\U(x)\,dx=\tau\int_0^\infty x\,\U(x)\,dx,$$
and so $\Lambda(1,0)=\tau.$
Thus, using the dependency~\eqref{eq:VRdependency} we find that $\Lambda(W,0)=\tau W$ and Equation~\eqref{eq:WODE2} is nothing but a rewriting of Equation~\eqref{eq:WODE1}.
We use this formulation~\eqref{eq:WODE2} here to highlight the link with eigenelements, and because it allows one to obtain results in the cases when $\nu\neq1.$
Now we apply Theorem~\ref{th:nu1} for $V_1\equiv1,\ R_1\equiv0$ and $V_2\equiv V,\ R_2\equiv R,$ and we obtain that
$$u_2(t,x)=W^{-k}\U(W^{-k}x)e^{\Lambda h(t)-\int_0^tR(s)\,ds}=\U(W(t);x)e^{\int_0^t\Lambda(W(s),R(s))ds}$$
is a solution to Equation~\eqref{eq:frag-drift}.
\end{proof}

\

This corollary provides a very intuitive explicit solution in the spirit of dependencies~\eqref{eq:VRdependency}.
At each time $t,$ the solution is an eigenvector associated to a parameter $W(t)$ with an instantaneous fitness $\Lambda(W(t),R(t))$ associated to the same parameter $W(t).$
The function $t\mapsto W(t)$ thus defined follows $V(t)$ with a delay explicitely given by ODE~\eqref{eq:WODE2}.

A very useful consequence for the different applications is the existence of an invariant manifold for the growth-fragmentation equation with time-dependent parameters of the form~\eqref{as:timedependency}.
Define the {\it eigenmanifold}
\beq\label{eq:invariantmanifold}{\mathcal E}:=\left\{Q\,\U(W;\cdot),\ (W,Q)\in(\R_+^*)^2\right\},\eeq
which is the union of all the positive eigenlines associated to a transport parameter $W.$
Then Corollary~\ref{co:nu1} ensures that, under the assumptions of Theorem~\ref{th:nu1},
any solution to Equation~\eqref{eq:frag-drift} with an initial distribution $u_0\in{\mathcal E}$ remains in ${\mathcal E}$ for all time.
Moreover the dynamics of such a solution reduces to ODE~\eqref{eq:WODE2}, and this is the key point we use to tackle nonlinear problems.

\

For $\nu\neq1,$ the technique fails and we cannot obtain an explicit solution with the method of Theorem~\ref{th:nu1}.
Nevertheless, we can still define $W$ as the solution to ODE~\eqref{eq:WODE2} and give properties of the functions defined as dilations of the eigenvector by~\eqref{eq:autosim}.
We obtain that the moments of these functions satisfy equations which are similar to the ones verified by the moments of the solution to the growth-fragmentation equation.
In the special case $\nu=0$ and $\gamma=1,$ we even obtain the same equation.
More precisely, if we denote, for $\alpha\geq0,$
\beq\label{eq:momentW}\M_\al[W](t):=\int_0^\infty x^\al \U(W(t);x)e^{\int_0^t\Lambda(W(s),R(s))\,ds}\,dx,\eeq
and
\beq\label{eq:moment}M_\al[u](t):=\int_0^\infty x^\al u(t,x)\,dx,\eeq
then we have the following result.

\medskip

\begin{lemma}\label{lm:moments}
On the one hand, if $W$ is a solution to Equation~\eqref{eq:WODE2}, then the moments $\M_\al$ satisfy 
\beq\label{eq:momentevolW}\dot \M_\al=\al \Lambda a_\al V\M_{\al+\nu-1}+(1-\al)\Lambda b_\al \M_{\al+\gamma}-\mu R\M_\al,\eeq
with
$$a_\al:=\f{M_\al[\U]}{M_{\al+\nu-1}[\U]}\qquad\text{and}\qquad b_\al:=\f{M_\al[\U]}{M_{\al+\gamma}[\U]}.$$
On the other hand, if $u$ is a solution to the fragmentation-drift equation, then the moments $M_\al$ satisfy
\beq\label{eq:momentevol}\dot M_\al=\al\tau V M_{\al+\nu-1}+(c_\al-1)\beta M_{\al+\gamma}-\mu R M_\al,\eeq
with
$$c_\al:=\int_0^1z^\al\kappa(z)\,dz.$$
\end{lemma}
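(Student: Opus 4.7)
The plan is to prove the two formulas essentially independently, since the second one is a direct moment computation on Equation~\eqref{eq:frag-drift} while the first one exploits the explicit self-similar dependency in \eqref{eq:VRdependency}.

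For the second identity \eqref{eq:momentevol}, I would multiply Equation~\eqref{eq:frag-drift} by $x^\alpha$ and integrate over $(0,\infty)$. Under Assumptions~\eqref{as:timedependency}--\eqref{as:powerlaw2}, the transport term becomes, after integration by parts (using $u(t,0)=0$ and sufficient decay of $u$ at infinity so that the boundary terms vanish),
\[
-\int_0^\infty x^\alpha\,\p_x(\tau V x^\nu u)\,dx = \alpha\tau V\int_0^\infty x^{\alpha+\nu-1}u\,dx = \alpha\tau V M_{\alpha+\nu-1}.
\]
For the fragmentation term, using \eqref{as:selfsimfrag} and \eqref{as:powerlaw2}, Fubini's theorem and the change of variables $z=x/y$ give
\[
\int_0^\infty x^\alpha(\F_\gamma u)\,dx=\beta\int_0^\infty u(y)y^{\alpha+\gamma}\int_0^1 z^\alpha\kappa(z)\,dz\,dy-\beta M_{\alpha+\gamma}=(c_\alpha-1)\beta M_{\alpha+\gamma}.
\]
Adding the death contribution $-\mu R M_\alpha$ yields \eqref{eq:momentevol}.

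For the first identity \eqref{eq:momentevolW}, I would first use the self-similar dependency $\U(W;x)=W^{-k}\U(W^{-k}x)$ from \eqref{eq:VRdependency} and the substitution $y=W^{-k}x$ to get the closed form
\[
\M_\alpha[W](t)=W(t)^{k\alpha}\,M_\alpha[\U]\,e^{\int_0^t\Lambda(W(s),R(s))\,ds}.
\]
Then I differentiate directly in $t$, substituting $\dot W=\frac{\Lambda(W,0)}{k}(V-W)$ from \eqref{eq:WODE2} and $\Lambda(W,R)=W^{k\gamma}\Lambda-R\mu$ from \eqref{eq:VRdependency}. After expansion this produces three terms proportional respectively to $W^{k\alpha+k\gamma-1}V$, $W^{k(\alpha+\gamma)}$ and $W^{k\alpha}$, together with the factor $M_\alpha[\U]e^{\int\Lambda ds}$. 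The final step is to re-express these as $\M_{\alpha+\nu-1}$, $\M_{\alpha+\gamma}$ and $\M_\alpha$ using the same closed form: here the key algebraic identity is $k(\gamma+1-\nu)=1$ coming from \eqref{eq:kdef}, which makes the exponent $k\alpha+k\gamma-1$ coincide with $k(\alpha+\nu-1)$. Introducing the ratios $a_\alpha$ and $b_\alpha$ yields exactly \eqref{eq:momentevolW}.

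The computations are essentially routine. The only genuine subtlety is the justification of the integration by parts and of Fubini in the moment computation for $u$: one needs the moments $M_{\alpha+\nu-1}$ and $M_{\alpha+\gamma}$ to be finite and the boundary term $x^\alpha\tau V x^\nu u(t,x)$ to vanish at $0$ and at $\infty$. This is the step most likely to require an a priori regularity/decay hypothesis on $u$; on the eigenmanifold side, the corresponding bounds are automatic from the known decay of $\U$.
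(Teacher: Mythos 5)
Your proposal matches the paper's proof in both parts: the first identity is obtained, exactly as in the paper, by writing $\M_\alpha[W]=M_\alpha[\U]\,W^{k\alpha}e^{\int_0^t\Lambda(W,R)\,ds}$ via the self-similar dilation, differentiating, substituting the ODE for $W$ and the self-similar form of $\Lambda(W,R)$, and converting the resulting powers of $W$ back into moments $\M_{\alpha+\nu-1}$, $\M_{\alpha+\gamma}$ using the identity $k\gamma-1=k(\nu-1)$; the second identity is the same direct moment computation by integration by parts and Fubini as in the paper. Your closing remark on justifying integration by parts and Fubini is a fair observation, but the paper likewise takes these for granted.
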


\

\begin{proof}
Using a change of variable, we can compute for all $\al\geq0$
$$\M_\al[W]=M_\al[\U] W^{k\al}e^{\int_0^t\Lambda(W(s),R(s))\,ds},$$
so we have
\begin{eqnarray*}
\dot\M_\al&=&k\al\f{\dot W}{W}\M_\al+\Lambda(W,R)\M_\al\\
&=&\al\Lambda W^{k\gamma-1}(V-W)\M_\al+\Lambda W^{k\gamma}\M_\al-\mu R\M_\al\\
&=&\al\Lambda V W^{k(\nu-1)}\M_\al+(1-\al)\Lambda W^{k\gamma}\M_\al-\mu R\M_\al\\
&=&\al\Lambda V a_\al \M_{\al+\nu-1}+(1-\al)\Lambda b_\al \M_{\al+\gamma}-\mu R \M_\al.
\end{eqnarray*}

Integrating Equation~\eqref{eq:frag-drift} against $x^\al dx,$ we obtain by integration by parts and Fubini's theorem that
\begin{eqnarray*}
\f{d}{dt}\int x^\al u(t,x)\,dx&=& \tau V\int x^{\al}\p_x\bigl(x^{\nu}u(t,x)\bigr)\,dx-\mu R\int x^\al u(t,x)\,dx\\
&&\hspace{.4cm} -\beta \int x^{\al+\gamma}u(t,x)\,dx+\beta \int_0^\infty x^{\al}\int_x^\infty y^{\gamma-1}\kappa\left(\f xy\right)\,dy\,dx\\
&=&\al\tau V\int x^{\al+\nu-1}u(t,x)\,dx-\mu R\int x^\al u(t,x)\,dx\\
&&\hspace{.4cm} -\beta \int x^{\al+\gamma}u(t,x)\,dx+\beta \int_0^\infty y^{\al+\gamma}\int_0^y \f{x^\al}{y^\al}\kappa\left(\f xy\right)\,\f{dx}{y}\,dy\\
&=&\al\tau VM_{\al+\nu-1}+(c_\al-1)\beta M_{\al+\gamma}-\mu R M_\al.
\end{eqnarray*}

\end{proof}

\

In the particular case $\nu=0$, $\gamma=1,$ and $\kappa$ is symmetric, we can compute the constants $a_\al,$ $b_\al,$ and $c_\al$ for $\al=1$ or $\al=2.$
A consequence is the useful result given in the following corollary.

\medskip

\begin{corollary}\label{co:nu0gamma1}
In the case when $\nu=0,\ \gamma=1,$ and $\kappa$ is symmetric, the zero and first moments $(\M_0,\M_1)$ and $(M_0,M_1)$ are both solutions to
\beq\label{eq:ODEclosed}
\left(\begin{array}{c}
\dot U\\
\dot P
\end{array}\right)
=
\left(\begin{array}{cc}
-\mu R&\beta\\
\tau V&-\mu R
\end{array}\right)
\left(\begin{array}{c}
U\\
P
\end{array}\right).
\eeq
\end{corollary}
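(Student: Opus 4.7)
The plan is to specialize Lemma~\ref{lm:moments} to the parameters $\nu=0$, $\gamma=1$ and $\kappa$ symmetric, applied at $\al=0$ and $\al=1$, and then to identify the constants $a_1$, $b_0$, $c_0$, $c_1$ so that the two resulting $2\times 2$ systems coincide.

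First I would write out \eqref{eq:momentevol} with $\nu=0$, $\gamma=1$: for $\al=0$, $\dot M_0=(c_0-1)\beta M_1-\mu R M_0$, and for $\al=1$, $\dot M_1=\tau V M_0+(c_1-1)\beta M_2-\mu R M_1$. Mass preservation \eqref{as:b} combined with \eqref{as:selfsimfrag} forces $c_1=\int_0^1 z\,\kappa(z)\,dz=1$, which kills the troublesome second-moment term; and $\kappa$ symmetric gives $n_0=2$, so $c_0-1=1$. Hence $(M_0,M_1)$ satisfies exactly the system \eqref{eq:ODEclosed}.

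Next I would do the analogous specialization of \eqref{eq:momentevolW}: for $\al=0$, $\dot\M_0=\Lambda b_0\M_1-\mu R\M_0$; for $\al=1$, $\dot\M_1=\Lambda a_1 V\M_0-\mu R\M_1$ (note the $(1-\al)$ factor eliminates the $\M_2$ term here automatically). The only thing left is to check that $\Lambda a_1=\tau$ and $\Lambda b_0=\beta$. For this I would multiply the eigenproblem \eqref{eq:eigenproblem} (with $\mu\equiv 0$, $R=0$, $V=1$) by $x$ and by $1$ in turn and integrate on $(0,\infty)$. Integration by parts on the transport term and Fubini on the fragmentation term — exactly the computation performed in the proof of Lemma~\ref{lm:moments} — give $\Lambda M_1[\U]=\tau M_0[\U]$ and $\Lambda M_0[\U]=(c_0-1)\beta M_1[\U]=\beta M_1[\U]$. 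By the definitions $a_1=M_1[\U]/M_0[\U]$ and $b_0=M_0[\U]/M_1[\U]$, these are precisely $\Lambda a_1=\tau$ and $\Lambda b_0=\beta$.

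Both pairs therefore satisfy the same linear system \eqref{eq:ODEclosed}, proving the corollary. There is no real obstacle here: the content is entirely the bookkeeping of the constants, with the two crucial cancellations being $c_1=1$ (from mass conservation) and the disappearance of the $\M_2$ term from the $(1-\al)$ factor, together with the identification of $\Lambda a_1$ and $\Lambda b_0$ via the eigenproblem's integral identities.
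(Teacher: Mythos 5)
Your proof is correct and follows essentially the same route as the paper: specialize Lemma~\ref{lm:moments} at $\al=0,1$ using $c_0=2$, $c_1=1$, then integrate the eigenproblem against $dx$ and $x\,dx$ to identify the constants. The only cosmetic difference is that you deduce $\Lambda a_1=\tau$ and $\Lambda b_0=\beta$ directly from the two integral identities, whereas the paper first computes the explicit values $\int x\,\U=\sqrt{\tau/\beta}$ and $\Lambda=\sqrt{\tau\beta}$ before plugging in.
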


\medskip

This Corollary will allow us, in Section~\ref{sec:Floquet}, to compare the Perron and Floquet eigenvalues not only for $\nu=1$ but also for $\nu=0,\ \gamma=1.$
In this case we do not have a particular solution to the growth-fragmentation equation as in Corollary~\ref{co:nu1}, but a particular solution of the reduced ODE system~\eqref{eq:ODEclosed}.

\medskip

\begin{proof}
For $\kappa$ symmetric, we have already seen that $c_0=n_0=2.$
Together with Assumption~\eqref{as:b} which gives $c_1=1,$ we conclude that $(M_0,M_1)$ is solution to Equation~\eqref{eq:ODEclosed}.

Integrating Equation~\eqref{eq:eigenproblem} against $dx$ and $x\,dx$ we obtain
$$\Lambda=\beta\int x\,\U(x)\,dx\qquad\text{and}\qquad\Lambda\int x\,\U(x)\,dx=\tau.$$
This allows to compute
$$\int x\,\U(x)\,dx=\sqrt{\frac{\tau}{\beta}}\qquad\text{and}\qquad \Lambda=\sqrt{\tau\beta}.$$
Thus $a_1=\sqrt{\frac{\tau}{\beta}}$ and $b_0=\sqrt{\frac{\beta}{\tau}},$ and $(\M_0,\M_1)$ satisfies Equation~\eqref{eq:ODEclosed} due to Lemma~\ref{lm:moments}.
\end{proof}

\medskip

\subsection{General Relative Entropy: Attractivity of the Invariant Manifold}
\label{ssec:GRE}

The existence of the invariant manifold ${\mathcal E}$ is useful to obtain particular solutions to nonlinear growth-fragmentation equations.
But what happens when the initial distribution $u_0$ does not belong to this manifold?
The GRE method ensures that ${\mathcal E}$ is attractive in a sense to be defined.

\

The GRE method requires one to consider the adjoint growth-fragmentation equation 
\beq\label{eq:adj-frag-drift}-\f{\p}{\p t} \psi(t,x) = \tau(t,x) \f{\p}{\p x} \psi(t,x) - \mu(t,x)\psi(t,x) + ({\mathcal F}^*\psi)(t,x),\eeq
where ${\mathcal F}^*$ is the adjoint fragmentation operator
$$({\mathcal F}^*\psi)(t,x):=\int_0^x b(t,x,y)\psi(t,y)\,dy-\beta(t,x)\psi(t,x).$$
If $u$ and $v$ are two solutions to Equation~\eqref{eq:frag-drift} and $\psi$ is a solution to Equation~\eqref{eq:adj-frag-drift}, then we have, for any function $H:\R\to\R,$
\begin{align*}
\f{d}{dt}\int_0^\infty \psi(t,&x)v(t,x)\,H\left(\f{u(t,x)}{v(t,x)}\right)\,dx=\\
-&\int_0^\infty\int_y^\infty b(t,y,x)\psi(t,x)v(t,y)\\
&\times\left[H\left(\f{u(t,x)}{v(t,x)}\right)-H\left(\f{u(t,y)}{v(t,y)}\right)+H'\left(\f{u(t,x)}{v(t,x)}\right)\left(\f{u(t,y)}{v(t,y)}-\f{u(t,x)}{v(t,x)}\right)\right]\,dxdy.\end{align*}
When $H$ is convex, the right hand side is nonpositive and we obtain a nonincreasing quantity called GRE.

\

In the case of time-independent coefficients, we can choose for $v$ a solution of the form $\U(x)e^{\Lambda t}.$
Then, to apply the GRE method, we need a solution to the adjoint equation; such solutions are given by solving the adjoint Perron eigenvalue problem
\beq
\label{eq:adjeigenproblem}
\left \{ \begin{array}{l}
\Lambda\phi(x)=\tau(x) \f{\p}{\p x} (\phi(x)) - \mu(x) \phi(x) + (\F^*\phi)(x), \qquad x \geqslant0,
\\
\\
\phi(x)\geq0, \qquad \int_0^\infty \phi(x)\U(x)dx =1.
\end{array} \right.
\eeq
Such a problem is usually solved together with the direct problem~\eqref{eq:eigenproblem} and the first eigenvalue $\Lambda$ is the same for each problem (see \cite{DG,M1,BP}).
Then the GRE ensures that any solution $u$ to the growth-fragmentation equation behaves asymptotically like $\U(x)e^{\Lambda t}.$
More precisely it is proved in \cite{MMP2,BP} under general assumptions that
\beq\label{eq:convcst}\lim_{t\to\infty}\|\varrho_0^{-1}u(t,\cdot)e^{-\Lambda t}-\U\|_{L^p(\U^{1-p}\phi\,dx)}=0,\qquad\forall p\geq1,\eeq
where $\varrho_0=\int u_0(y)\phi(y)\,dy$ with $u_0(x)=u(t=0,x).$

\

Now consider Equation~\eqref{eq:frag-drift} whose coefficients are time-dependent.
Under the assumptions of Theorem~\ref{th:nu1} and for $p=1,$
the convergence result~\eqref{eq:convcst} can be interpreted as the attractivity of the invariant manifold ${\mathcal E}$ in $L^1(\phi\,dx)$ with the distance
$$d(u,{\mathcal E}):=\inf_{\U\in{\mathcal E}}\|\varrho^{-1}u-\U\|_{L^1(\phi(x)\,dx)},$$
where $\varrho:=\int u(y)\phi(y)\,dy.$
Consider, for $V(t)\geq0,$ a solution $W$ to
$$\dot W=\f Wk(\tau V-W)$$
with $W(0)=1.$
First we have $\dot W\geq-\f1k W^2,$ so $W\geq\f1{1+\f tk}$ and $h\geq k \ln(1+\f tk).$
Thus $h:\R_+\to\R_+$ is one to one and we can build from a solution $u(t,x)$ to Equation~\eqref{eq:frag-drift} the function
\beq\label{eq:v}v(h(t),x):=W^k(t)\,u(t,W^k(t)x)\,e^{-\int_0^t(W(s)-\mu R(s))\,ds},\eeq
which satisfies $v(0,x)=u(0,x)=u_0(x).$
Using Theorem~\ref{th:nu1}, $v(t,x)$ is a solution to
\beq\label{eq:linear2}\f{\p}{\p t}v(t,x)=-\f{\p}{\p x}\bigl(x\,v(t,x)\bigr)-v(t,x)+(\F_\gamma v)(t,x)\eeq
so, denoting by $\U$ and $\phi$ the eigenfunctions of Equation~\eqref{eq:linear2}, we have
$$\int_0^\infty v(t,x)\phi(x)\,dx=\int_0^\infty v(t=0,x)\phi(x)\,dx=\varrho_0$$
and
$$\lim_{t\to\infty}\|\varrho_0^{-1}v(t,\cdot)-\U\|_{L^1(\phi(x)\,dx)}=0.$$
For $\nu=1,$ $\phi$ is linear (see examples in \cite{DG}), so we can compute from~\eqref{eq:v}
$$\varrho(t)=\int u(t,y)\phi(y)\,dy=\varrho_0 W^k(t)\,e^{\int_0^t(W(s)-\mu R(s))\,ds}$$
and
\beq\label{eq:Eattract}d(u(t,\cdot),{\mathcal E})\leq\|\varrho^{-1}(t)u(t,\cdot)-W^{-k}\U(W;\cdot)\|=\|\varrho_0^{-1}v(h(t),\cdot)-\U\|\to0.\eeq
This is what we call the attractivity of ${\mathcal E}.$

\

The exponential decay in \eqref{eq:convcst} is proved in \cite{PR,LP} for $p=1$ and for a constant fragmentation rate $\beta(x)\equiv\beta.$
It is also proved in \cite{CCM} for powerlaw parameters in the norm corresponding to $p=2$ and this is the case we are interested in.
A spectral gap result is proved in $L^2(\U^{-1}\phi\,dx)$ and the result is extended to bigger spaces thanks to a general method for spectral gaps in Hilbert spaces.

\medskip

\begin{theoremCCM}
Under Assumption~\eqref{as:powerlaw} with $\nu=1$, Assumption~\eqref{as:selfsimfrag} with $\gamma\in(0,2],$ and Assumption~\eqref{as:boundfragker},
there exist $\bar a>0$ and $\bar r\geq3$ such that, for any $a\in(0,\bar a)$ and any $r\geq\bar r,$ there exists $C_{a,r}$ such that for any $u_0\in{\mathcal H}:=L^2(\theta),\ \theta(x)=x+x^r,$ there holds
\beq\label{eq:spectralgap}\forall t>0,\qquad\|\varrho_0^{-1}u(t,\cdot)e^{-\Lambda t}-\U\|_{\mathcal H}\leq C_{a,r}\|\varrho_0^{-1}u_0-\U\|_{\mathcal H}\,e^{-at}.\eeq
\end{theoremCCM}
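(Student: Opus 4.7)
The plan is to first establish the spectral gap in the small Hilbert space $\mathcal{H}_0 := L^2(\U^{-1}\phi\,dx)$ where the General Relative Entropy machinery provides a natural dissipation, and then transfer it to the larger space $\mathcal{H} = L^2(\theta\,dx)$ by an enlargement argument of Gualdani--Mischler--Mouhot type. Let $\mathcal{L}$ denote the growth-fragmentation generator (with $\nu=1$) minus $\Lambda$, so that solutions of \eqref{eq:frag-drift} with time-independent coefficients can be written $u(t,\cdot) = e^{t\mathcal{L}}u_0 \cdot e^{\Lambda t}$. In $\mathcal{H}_0$, the quadratic GRE identity (taking $H(s)=(s-1)^2$ with $v=\U$) gives an exact dissipation formula
\[
\frac{d}{dt}\|\varrho_0^{-1}u(t,\cdot)e^{-\Lambda t}-\U\|_{\mathcal{H}_0}^2 = -D[u(t,\cdot)/\U]
\]
where $D$ is the nonnegative fragmentation dissipation functional. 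The first task is to prove a Poincaré-type inequality $D[h]\geq 2a\,\|h-\langle h\rangle\|_{\mathcal{H}_0}^2$ for functions $h$ with the right mass normalization. This is where the two-sided bound \eqref{as:boundfragker} on $\kappa$ plays its decisive role: it allows one to compare the dissipation from an arbitrary kernel with the dissipation from the equimitotic kernel $\kappa\equiv 2$, for which the Poincaré inequality can be obtained by expansion in an appropriate orthogonal basis (or by a direct weighted $H^1$-type estimate using $\gamma>0$).

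Once the spectral gap is proved in $\mathcal{H}_0$, the second step is to enlarge it to $\mathcal{H}$. I would split $\mathcal{L} = \mathcal{A} + \mathcal{B}$ where $\mathcal{B}$ is the drift--death part plus the "large $x$" portion of the fragmentation loss term, and $\mathcal{A}$ is a suitably truncated, smoothing remainder. The dissipation estimate for $\mathcal{B}$ in $\mathcal{H}$ uses the power-law growth $\beta(x)=\beta x^\gamma$ with $\gamma>0$ to force $\mathcal{B}$ to dissipate the weights $x$ and $x^r$ at a rate that can be made as large as any fixed $a<\gamma$ provided $r\geq \bar r$ is large enough; this is why the threshold $\bar r$ and the restriction $a\in(0,\gamma)$ appear. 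One shows moreover that $\mathcal{A}$ is bounded from $\mathcal{H}$ into $\mathcal{H}_0$ (using the two-sided bound on $\kappa$ again, and the fact that $\phi/\U$ is controlled by the weight $\theta$). The Duhamel iteration
\[
e^{t\mathcal{L}} = e^{t\mathcal{B}} + \int_0^t e^{(t-s)\mathcal{L}}\mathcal{A}e^{s\mathcal{B}}\,ds
\]
then transfers the decay rate $a$ from $\mathcal{H}_0$ to $\mathcal{H}$: the first term has the right decay by the $\mathcal{B}$-estimate, and the second term, after one iteration, lies in $\mathcal{H}_0$ and inherits the exponential decay from the $\mathcal{H}_0$-spectral gap.

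The hard part, as usual in this program, is the Poincaré inequality for $D$ with an \emph{explicit} (or at least effective) constant. The drift term $-\partial_x(xu)$ contributes no coercivity, so the coercivity must come entirely from the fragmentation part combined with the weight structure; the bound $\gamma\leq 2$ enters through a moment interpolation needed to close the estimate. The enlargement step is more mechanical but requires careful bookkeeping of constants so that one can afford $a$ arbitrarily close to $\gamma$ by choosing the truncation parameter in $\mathcal{A}$ and the exponent $r$ large.
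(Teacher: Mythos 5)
This theorem is not something the paper proves. The environment is a starred \texttt{newtheorem} whose title reads ``Theorem \cite{CCM}'', and the statement is imported verbatim from the reference \cite{CCM}; the paper uses it as a black box in the proofs of Theorems~\ref{th:convergence}, \ref{th:stability} and~\ref{th:PDEoscill}. The only description the paper gives of the argument is the single sentence preceding the statement: a spectral gap result is proved in \cite{CCM} for $\nu=1$ in $L^2(\U^{-1}\phi\,dx)$, and then the result is extended to bigger spaces thanks to a general method for spectral gap. So there is no in-paper proof for your proposal to be compared against.

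With that caveat, your sketch does line up with the skeleton the paper attributes to \cite{CCM}: spectral gap in the small GRE space $L^2(\U^{-1}\phi\,dx)$, then enlargement to $L^2(\theta\,dx)$ via an $\mathcal{A}+\mathcal{B}$ splitting and a Duhamel iteration, with the constraints $a<\gamma$, $r\geq\bar r$ and $\gamma\leq2$ entering where you say they should. The step you should be most careful about is the claimed Poincar\'e inequality $D[h]\geq 2a\|h-\langle h\rangle\|^2$ for the GRE dissipation: that dissipation only penalizes differences of $h$ across the scales selected by the kernel, not a gradient, so such an inequality --- with a rate $a$ that can be pushed arbitrarily close to $\gamma$ --- is itself a nontrivial theorem and cannot be dispatched by a one-sentence appeal to the two-sided bounds on $\kappa$. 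If you intend your proposal as more than a roadmap, that is the step that needs a full argument, and you should check it against \cite{CCM} itself rather than against this paper, which takes the result for granted.
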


\medskip

This result is very useful to treat Examples 1 and 2 because $L^2(\theta)\subset L^1(x^p)$ for $r\geq 2p+1.$
Moreover the exponential decay allows one to prove exponential stability results for Equation~\eqref{eq:nonlinear} when $\kappa$ is constant (see Section~\ref{sec:nonlineardrift}).

\

\section{Nonlinear Drift Term: Convergence and Stability}\label{sec:nonlineardrift}

Consider the nonlinear growth-fragmentation equation~\eqref{eq:nonlinear} where the transport term depends on the $pth$-moment of the solution itself.
This dependency may represent the influence of the total population of individuals on the growth process of each individual.
We study the long-time asymptotic behavior of the solutions in the positive cone ${\mathcal H}^+$ with the weight $\theta(x)=x+x^r$ for
\beq\label{as:r}r\geq\max(\bar r,2p+1).\eeq
We prove that there is always convergence to a steady state, provided that the function $f$ is less than $\mu$ at the infinity.
This result, stated in Theorem~\ref{th:nonlindrift:convergence} in the introduction, is made precise in Theorem~\ref{th:convergence} with the expression of the steady states in terms of eigenfunctions.
Results about their stability are given in Theorem~\ref{th:stability} and proved in the current section.
We use the notation $M_p$ for $M_p[\U]=\int x^p\U(x)\,dx.$

\medskip

\begin{theorem}\label{th:convergence}
Assume that $f$ is a continuous function on $[0,+\infty)$ which satisfies Assumption~\eqref{as:f}, that $\gamma\in(0,2],$ and that the fragmentation kernel $\kappa$ satisfies Assumption~\eqref{as:boundfragker}.
Then the nontrivial steady states of Equation~\eqref{eq:nonlinear} are
$$\f{I_\infty}{M_p}\mu^{-kp}\,\U(\mu;\cdot)\quad with \quad I_\infty\in{\mathcal N},$$
and for any solution $u,$ there exists $I_\infty\in{\mathcal N}\cup\{0\}$ such that
\beq\label{eq:convergence}\lim_{t\to\infty}\left\|u(t,\cdot)-\f{I_\infty}{M_p}\mu^{-kp}\,\U(\mu;\cdot)\right\|_{\mathcal H}=0.\eeq
\end{theorem}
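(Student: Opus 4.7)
The plan is to turn the nonlinear equation~\eqref{eq:nonlinear} into an autonomous linear equation via the dilation of Theorem~\ref{th:nu1}, read off the long-time behaviour from the spectral gap estimate~\eqref{eq:spectralgap}, and then reduce the remaining dynamics to a two-dimensional ODE in $(W, I)$ which I analyze with a Lyapunov functional.

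First, for a solution $u$ to~\eqref{eq:nonlinear} I set $I(t) := \int x^p u(t, x)\,dx$, so that $u$ solves the linear equation~\eqref{eq:frag-drift} with time-dependent coefficients $V(t) = f(I(t))$ and $R \equiv 1$. I let $W(t)$ solve~\eqref{eq:WODE1} with $V_1 \equiv 1$, $V_2 \equiv V$, and $W(0) = 1$; the explicit Bernoulli formula~\eqref{eq:Bernoulli} together with $\limsup_{I \to \infty} f(I) < \mu < \infty$ in~\eqref{as:f} guarantees $W > 0$ and $h(t) := \int_0^t W(s)\,ds \to \infty$. The converse direction of Theorem~\ref{th:nu1} then produces a function $v$, as in~\eqref{eq:v}, which solves the autonomous canonical growth-fragmentation equation on the rescaled time $s = h(t)$, with $v(0, \cdot) = u_0 \in \mathcal{H}$. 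The spectral gap~\eqref{eq:spectralgap} applied to $v$ gives $\|\rho_0^{-1} v(s, \cdot) e^{-\Lambda s} - \mathcal{U}\|_\mathcal{H} \le C e^{-as}$, and undoing the dilation through~\eqref{eq:trans}--\eqref{eq:VRdependency} yields
$$u(t, \cdot) = Q(t)\,\mathcal{U}(W(t); \cdot) + r(t, \cdot), \qquad Q(t) := \rho_0\, e^{\int_0^t (W - \mu)\,ds},$$
with $\|r(t, \cdot)\|_\mathcal{H}$ decaying exponentially in $h(t)$.

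Since $\mathcal{H} \hookrightarrow L^1(x^p\,dx)$ holds by~\eqref{as:r}, the same expansion delivers $I(t) = Q(t)\,W(t)^{kp}\,M_p + o(1)$; combined with $\dot Q = Q(W - \mu)$ and the ODE for $W$, this presents $(W, I)$ as an asymptotically autonomous perturbation of
$$\dot W = \frac{W(f(I) - W)}{k}, \qquad \dot I = I\,\bigl[(1 - p) W + p f(I) - \mu\bigr].$$
Its positive equilibria are exactly $(\mu, I_*)$ with $I_* \in \mathcal{N}$, together with the trivial/axis points, and the assumption $\limsup f < \mu$ keeps $I(t)$ bounded. A Lyapunov functional of the form
$$L(W, I) = \alpha \bigl(W - \mu - \mu \log(W/\mu)\bigr) + \beta \int^{I} \frac{\mu - f(s)}{s}\,ds,$$
with $(\alpha, \beta)$ chosen according to the sign of $1 - p$, yields $\dot L \le 0$ with equality only at equilibria; the case $p = 1$ decouples as the scalar, monotone equation $\dot I = I(f(I) - \mu)$. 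LaSalle's invariance principle together with the finiteness of $\mathcal{N}$ then forces $(W, I) \to (\mu, I_\infty)$ for some $I_\infty \in \mathcal{N} \cup \{0\}$; a standard asymptotically-autonomous perturbation argument transfers this back to the full coupled dynamics, and reassembling $Q(t)\,\mathcal{U}(W(t); \cdot) \to (I_\infty/M_p)\,\mu^{-kp}\,\mathcal{U}(\mu; \cdot)$ in $\mathcal{H}$ produces~\eqref{eq:convergence} and identifies the stated steady states.

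The hardest point is constructing a Lyapunov function valid uniformly in $p \ge 0$: the candidate above has the wrong sign for $p > 1$ and degenerates at $p = 1$. For $p > 1$ I expect to replace the Lyapunov step by a Dulac or Poincar\'e-Bendixson argument excluding periodic orbits and heteroclinic cycles of the reduced 2D system, then lift the conclusion through an asymptotically-autonomous result. A secondary technical issue is ensuring that the $\mathcal{H}$-decay of $r$ in the $s$-scale translates to a genuine decay in $t$, which uses that $W$ stays bounded below by a positive constant --- itself a consequence of the boundedness of $I(t)$ via Assumption~\eqref{as:f}.
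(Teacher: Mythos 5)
Your overall architecture is the paper's: dilate to the autonomous linear equation via Theorem~\ref{th:nu1}, use the spectral gap of \cite{CCM} to show that the $p$-th moment is asymptotically slaved to the $(W,Q)$ variables with an exponentially small error $\varepsilon(t)$, and reduce to a planar ODE which you analyze by a Lyapunov functional. The paper also changes to the variable $Z=W^{kp}Q$ (your $I$ is just $Z$ up to the constant $\mu^{kp}M_p$), and the two-step structure (eigenmanifold first, general $u_0$ second) is the same.

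The genuine gap is in the Lyapunov step. You expect your candidate
$L(W,I)=\alpha\,G(W)+\beta\,F(I)$ to fail for $p>1$ and you fall back on Poincar\'e--Bendixson/Dulac for that range. Neither is needed nor, as stated, sufficient. Completing the square in the dissipation requires $1+\alpha^2(p-1)=2\alpha\sqrt p$; the binomial $(p-1)\alpha^2-2\sqrt p\,\alpha+1=0$ has discriminant $4p-4(p-1)=4>0$ for \emph{every} $p\neq 1$, with roots $\alpha_\pm=\tfrac{1}{\sqrt p\pm1}$, and the coefficient in front of $F$ in the paper's functional is $\alpha_+^2+\alpha_-^2>0$, so no sign ever flips. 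The essential ingredient you are missing is that a single root only yields a dissipation $-\bigl(a+\alpha b\bigr)^2$ that vanishes on a whole line, while summing the contributions of \emph{both} roots gives $(a+b)^2+(a+\tfrac{\alpha_-}{\alpha_+}b)^2$, which by Lemma~\ref{lm:lyapunov} is bounded below by $\omega(a^2+b^2)$; this strict, coercive dissipation $D(W,Z)$ is exactly what lets the paper absorb the vanishing perturbation $E(W,Z,\varepsilon)$ in the second step. Without it, the proposal has two problems: (i) for the reduced autonomous system, Poincar\'e--Bendixson alone does not exclude limit cycles or graphics, so it does not prove convergence to a steady state, which is what the theorem claims (and Section~\ref{sec:oscilld2} shows cycles are a real possibility in nearby models); (ii) for the perturbed nonautonomous system, a bare ``asymptotically-autonomous perturbation'' argument would still only transfer information about $\omega$-limit sets, not force them to be singletons. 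So you would in any case need to exhibit a Dulac or Lyapunov function, and the explicit one in the paper works uniformly in $p$ once you take the combination of the two roots; the case $p=1$ is handled separately because $Z$ then decouples into the scalar monotone flow $\dot Z=Z(f_1(Z)-\mu)$, as you also note.

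A smaller point: the final passage from $(W,Z)\to(1,Z_\infty)$ to $\mathcal H$-convergence needs (a) the spectral gap estimate \eqref{eq:u-QU} to kill $\|u-Q\,\mathcal U(W\mu;\cdot)\|$ and (b) a dominated convergence argument for $\|Q\,\mathcal U(W\mu;\cdot)-Z_\infty\mathcal U(\mu;\cdot)\|$, using the polynomial decay of $\mathcal U$ from \cite{DG}; your sketch leaves this implicit but it is routine once the ODE convergence is in hand.
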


\

\begin{proof}[Proof of Theorem~\ref{th:convergence}]
\noindent{\bf First step: $u_0\in{\mathcal E}.$}\\
Consider an initial distribution $u_0\in{\mathcal E}$ defined in \eqref{eq:invariantmanifold}. Then there exist $W_0>0$ and $Q_0\geq0$ such that
$$u_0(x)=Q_0\,\U(W_0\mu;x).$$
Let $u(t,x)$ be the solution to Equation~\eqref{eq:nonlinear} and define $W$ as the solution to
\begin{equation}\label{eq:redODE}
\left\{\begin{array}{cll}
\dot W&=&\dis\f Wk\left(f\left(\int x^pu(t,x)\,dx\right)- \mu W\right),
\vspace{.2cm}\\
\dot W(0)&=&W_0.
\end{array}\right.
\end{equation}
Then Corollary~\eqref{co:nu1} ensures that
$$\forall\,t,x\geq0,\quad u(t,x)=Q_0\,\U(W(t)\mu;x)e^{\mu\int_0^t(W(s)-1)\,ds},$$
and so we have
$$\int_0^\infty x^pu(t,x)\,dx=Q_0W^{kp}\mu^{kp}\left(\int_0^\infty x^p\U(x)\,dx\right)e^{\mu\int_0^t(W(s)-1)\,ds}.$$
Now defining
$$Q(t):=Q_0 e^{\mu\int_0^t(W(s)-1)\,ds},$$
we obtain a system of ODEs equivalent to Equation~\eqref{eq:nonlinear} in ${\mathcal E},$
\begin{equation}\label{eq:nonlinODE}
\left\{\begin{array}{rcl}
\dot W&=&\dis\f Wk\left(f_p\left(W^{kp}Q\right)- \mu W\right),
\vspace{.2cm}\\
\dot Q&=&\mu\,Q(W-1),
\end{array}\right.
\end{equation}
with the notation
$$f_p(I):=f\left(I\mu^{kp}\int_0^\infty x^p\U(x)\,dx\right),$$
and proving convergence of $u$ is equivalent to proving convergence of $(W,Q).$
To study System~\eqref{eq:nonlinODE}, we change the unknown to $Z:=W^{kp}Q.$
Then $(W,Z)$ is solution to
\begin{equation}\label{eq:ODEWZ}
\left\{\begin{array}{rcl}
\dot W&=&\dis\f Wk\left(f_p\left(Z\right)- \mu W\right),
\vspace{.2cm}\\
\dot Z&=&pZ\left(f_p\left(Z\right)- \mu W\right)+ \mu Z(W-1),
\end{array}\right.
\end{equation}
and the positive steady states satisfy $W_\infty=\mu^{-1}f_p(Z_\infty)=1.$
To prove convergence to one of these steady states, we exhibit a Lyapunov functional.
Denoting $A:=\mu(W-1)$ and $B:=\mu-f_p(Z),$ System~\eqref{eq:ODEWZ} writes
\begin{equation}\label{eq:ODEAB}
\left\{\begin{array}{rcl}
k\dfrac{\dot W}{W}&=&-A-B,
\vspace{.2cm}\\
\dfrac{\dot Z}{Z}&=&-(p-1)A-pB.
\end{array}\right.
\end{equation}
For $\al>0,$ multiply the first equation by $\al A$ and the second by $B.$
The sum gives
\begin{align*}
\al k \f{\dot W}{W}A+\f{\dot Z}{Z}B&=-\Bigl(\al A^2+pB^2+(\al+p-1)AB\Bigr)\\
&=-\left(\al A^2+pB^2+\f{\al+p-1}{2\sqrt{\al p}}2\sqrt{\al p}AB\right)\\
&\leq \f{|\al+p-1|}{2\sqrt{\al p}}\left(\al A^2+pB^2\right)-\left(\al A^2+pB^2\right)\\
&\leq\left(\f{|\al+p-1|}{2\sqrt{\al p}}-1\right)\left(\al A^2+pB^2\right)\\
&\leq -\omega\left(\al A^2+pB^2\right)
\end{align*}
with $\omega>0$ if we can find $\al$ such that $|\al+p-1|<2\sqrt{\al p}.$
Equivalently we have to find $\al>0$ such that
\begin{align}
(\al+p-1)^2&<4\al p,\nonumber\\
\al^2-2(p+1)\al+&(p-1)^2<0.\label{eq:binomial}
\end{align}
But the reduced discriminant is
$$\Delta'=(p+1)^2-(p-1)^2=4p>0$$
and there always exists $\al>0$ such that~\eqref{eq:binomial} is satisfied.
Finally, defining
$$G(W):= W-1-\ln(W)$$
and
$$F(Z):=\int_1^Z(\mu-f_p(z))\f{dz}{z},$$
we obtain that $L(W,Z):=\al k\mu G(W)+F(Z)$ is a Lyapunov functional for System~\eqref{eq:ODEWZ}.
Indeed it satisfies
$$\f{d}{dt}L(W(t),Z(t))=\al k \f{\dot W}{W}A+\f{\dot Z}{Z}B\leq-\omega(\al A^2+B^2):=-D(W,Z),$$
with $D(W,Z)$ positive out of the steady states,
and Assumption~\eqref{as:f} ensures that $L$ and $D$ are coercive in the sense that $L(W,Z)\to+\infty$ and $D(W,Z)\to+\infty$ when $\|(W,Z)\|\to+\infty.$
So we can infer the convergence of the solution to a steady state.
If $f(0)>\mu,$ $L(W,Z)\to+\infty$ when $W$ or $Z$ tends to $0,$ so for any $(W_0>0,Z_0>0)$ the solution $(W,Z)$ converges to a critical point of $L,$ namely $(1,\f{I_\infty}{\mu^{kp}M_p})$ with $I_\infty\in{\mathcal N}.$
If $f(0)<\mu,$ then for any $\bar W>0$ we have that $L(W,Z)\xrightarrow{(W,Z)\to(\bar W,0)}-\infty.$
So either $(W,Z)$ converges to $(1,\f{I_\infty}{\mu^{kp}M_p})$ with $I_\infty\in{\mathcal N},$ or $Z\to0.$
To obtain the convergence in ${\mathcal H},$ we write
$$\|u(t,\cdot)-Z_\infty\U(\mu;\cdot)\|=\int_0^\infty(Q(t)\U(W(t)\mu;x)-Z_\infty\U(\mu;x))^2(x+x^r)\,dx,$$
and we use dominated convergence.
We know from Theorem~1 in \cite{DG} that under Assumption~\eqref{as:boundfragker} and for $\nu=1,$ $x^\al\U(x)$ is bounded in $\R_+$ for all $\al\geq0,$ so it ensures that the integrand is dominated by an integrable function.
Then the convergence in~${\mathcal H}$ is given by the convergence of $(W,Z),$ so convergence~\eqref{eq:convergence} occurs.

\

\noindent{\bf Second step: general initial distribution $u_0.$}\\
We now assume that $u$ a solution to Equation~\eqref{eq:nonlinear} not necessarily in ${\mathcal E},$ and we define as in Section~\ref{ssec:GRE}
$$v(h(t),x):=W^{k}(t)u(t,W^{k}(t)x)e^{\mu(t-h(t))},$$
with
$$\dot W=\f W k\left(f\left(\int x^p u\right)-\mu W\right)$$
and
$$\dot h=W.$$
We recall that in this case $h:\R_+\to\R_+$ is one to one.
We choose the initial values $W(0)=1$ and $h(0)=0$ to have $v(0,x)=u(0,x)=u_0(x).$
Then $v(t,x)$ is a solution to
\beq\label{eq:linear}\f{\p}{\p t} v(t,x) = - \mu\f{\p}{\p x} \big(x v(t,x)\big) - \mu v(t,x) + {\mathcal F}_\gamma v(t,x)\eeq
and, due to the GRE, we conclude that
$$v(t,x)\xrightarrow[t\to\infty]{}\varrho_0\,\U(\mu;x),$$
where
$$\varrho_0=\int_0^\infty\phi(\mu;x)v_0(x)\,dx=\int_0^\infty\phi(\mu;x)u_0(x)\,dx,$$
and so
$$\int x^p u(t,x)\,dx\underset{t\to\infty}{\sim}\varrho_0\mu^{kp}\left(\int x^p\U(x)\,dx\right)W^{kp}(t)e^{\mu(h(t)-t)}dx.$$
As a consequence it holds that
$$\dot W(t)\underset{t\to\infty}{\sim}\f Wk\left(f_p\left(W^{kp}Q\right)-\mu W\right)$$
with $Q(t)=\varrho_0\,e^{\mu(h(t)-t)}$ satisfying the equation
$$\dot Q=\mu Q(W-1).$$
The interpretation of this is that System~\eqref{eq:redODE} represents asymptotically the dynamics of the solutions to Equation~\eqref{eq:nonlinear}.
More rigorously, define
\beq\label{eq:defep}\e(t):=\frac{\int x^p u(t,x)\,dx}{\mu^{kp}M_pQ(t)W^{kp}(t)}-1.\eeq
Then we have
$$\dot W(t)=\f Wk\left(f_p\left(W^{kp}Q(1+\e(t))\right)-\mu W\right)$$
and, using the Cauchy-Schwarz inequality and the exponential decay theorem of \cite{CCM},
\begin{eqnarray}\label{eq:ep}
|\e(t)|&=&\frac{W^{-kp}e^{\mu (t-h(t))}}{\mu^{kp}M_p}\left|\int(\varrho_0^{-1}u(t,x)-\U(\mu;x) W^{kp}e^{\mu (h(t)- t)})x^p\,dx\right|\nonumber\\
&=&\frac{1}{\mu^{kp}M_p}\left|\int(\varrho_0^{-1}v(h(t),x)-\U(\mu;x))x^p\,dx\right|\nonumber\\
&\leq&\frac{1}{\mu^{kp}M_p}\left(\int|\varrho_0^{-1}v(h(t),x)-\U(\mu;x)|^2(x+x^r)\,dx\right)^{\frac12}\left(\int \frac{x^{2p}}{x+x^r}dx\right)^{\frac12}\nonumber\\
&&\vspace{.2cm}\nonumber\\
&\leq& \frac{C_p}{\mu^{kp}M_p}\|\varrho_0^{-1}u_0-\,\U(\mu;\cdot)\|e^{-ah(t)}.
\end{eqnarray}
The function $\f{x^{2p}}{x+x^r}$ is integrable under Assumption~\eqref{as:r}.
Finally, the long-time dynamics of the solution $u$ to Equation~\eqref{eq:nonlinear} is prescribed by
\begin{equation}\label{eq:ODEWe}
\left\{\begin{array}{rcl}
k\dfrac{\dot W}{W}&=&-\mu(W-1)-\bigl(\mu-f_p((1+\e)Z)\bigr),
\vspace{.2cm}\\
\dfrac{\dot Z}{Z}&=&-(p-1)\mu (W-1)-p\bigl(\mu-f_p((1+\e)Z)\bigr),
\end{array}\right.
\end{equation}
where $Z=W^{kp}Q,$ and $\e(t)\xrightarrow[t\to\infty]{}0.$
Now we see what becomes the Lyapunov functional of the first step for this system to obtain
\beq\label{eq:Lyapunovep}\f{d}{dt}L(W,Z)\leq-D(W,Z)+\underbrace{(\al A+pB)\bigl(f_p((1+\e)Z)-f_p(Z)\bigr)}_{:=E(W,Z,\e)}.\eeq
Thanks to Assumption~\eqref{as:f} we know that $f_p$ is bounded; hence $W,$ which is a solution to
$$\dot W=\f Wk\bigl(f_p((1+\e)Z)-\mu W\bigr),$$
is also bounded.
Thus $E(W,Z,\e)$ is bounded and, because $L$ and $D$ are coercive, the trajectory $(W,Z)$ is bounded.
Moreover $E(W(t),Z(t),\e(t))\xrightarrow[t\to\infty]{}0$ because $\e(t)\to0,$ so $(W,Z)$ converges to a steady state $(1,Z_\infty).$
Finally we write
$$\|u(t,\cdot)-Z_\infty\U(\mu;\cdot)\|\leq\|u(t,\cdot)-Q(t)\U(W(t)\mu;\cdot)\|+\|Q(t)\U(W(t)\mu;\cdot)-Z_\infty\U(\mu;\cdot)\|,$$
and we know from the first step that $\|Q(t)\U(W(t)\mu;\cdot)-Z_\infty\U(\mu;\cdot)\|\to0.$
We treat the other term using the spectral gap theorem of \cite{CCM}:
\begin{eqnarray}\label{eq:u-QU}
\|u(t,\cdot)-Q(t)\U(W(t)\mu;\cdot)\|_{\mathcal H}&=&Q\left(\int|\varrho_0^{-1}v(h(t),x)-\U(\mu;x)|^2(x+W^{(r-1)k}x^r)\,dx\right)^{\f12}\nonumber\\
&\leq&C\|\varrho_0^{-1}v(h(t),\cdot)-\U(\mu;\cdot)\|\nonumber\\
&\leq&C\|\varrho_0^{-1}u_0-\U(\mu;\cdot)\|\,e^{-ah(t)}. 
\end{eqnarray}
So $\|u(t,\cdot)-Q(t)\U(W(t)\mu;\cdot)\|\to0$ because $h(t)\to+\infty,$ and the proof is complete.
\end{proof}

\

\begin{proof}[Proof of Theorem~\ref{th:stability}]
\noindent{\bf Stability of the trivial steady state.}\\
We start with the stability of the zero steady state when $f(0)<\mu$ and $p\geq1.$
For this we integrate Equation~\eqref{eq:nonlinear} against $x^p$ and find
$$\f{d}{dt}\left(\int x^pu(t,x)\,dx\right)\leq \left(f\left(\int x^pu(t,x)\,dx\right)-\mu\right)\left(\int x^pu(t,x)\,dx\right)$$
due to the mass conservation Assumption \eqref{as:b}.
Thus $\int x^pu(t,x)\,dx$ is a decreasing function if $f\left(M_p[u_0]\right)<\mu.$
Then we integrate against $x+x^r$ for any $r\geq1$ and we obtain
$$\f{d}{dt}\left(\int u(t,x)\,(x+x^r)dx\right)\leq \left(f\left(\int x^pu(t=0,x)\,dx\right)-\mu\right)\left(\int u(t,x)\,(x+x^r)dx\right),$$
which ensures the exponential convergence
$$\|u(t,\cdot)\|_{\mathcal H}\leq \|u_0\|_{\mathcal H}\,e^{\left(f\left(M_p[u_0]\right)-\mu\right)t}.$$
For $r\geq p,$ we have $M_p[u_0]\leq C \|u_0\|_{\mathcal H},$ so for $\|u_0\|$ small enough we have $f(M_p[u_0])<\mu,$ and the exponential convergence occurs.

When $f(0)>\mu,$ we have seen in the proof of Theorem~\ref{th:convergence} that $L(W,Z)\to+\infty$ when $W$ or $Z$ tends to zero.
So the trivial steady state is unstable.

\

\noindent{\bf Stability of nontrivial steady states.}\\
Let $(W_\infty,Z_\infty)$ be a positive steady state to System~\eqref{eq:ODEWZ}.
We want to prove that $Z_\infty\U(\mu;\cdot)$ is locally asymptotically stable.
Since Theorem~\ref{th:convergence} ensures the convergence of any solution to Equation~\eqref{eq:nonlinear} toward a steady state,
it only remains to prove the local stability of $Z_\infty\U(\mu;\cdot),$ namely
$$\forall\,\rho_1>0,\ \exists\,\rho_2>0,\ \|u_0-Z_\infty\U(\mu;\cdot)\|<\rho_2\quad \Rightarrow\quad \forall\,t>0,\ \|u(t,\cdot)-Z_\infty\U(\mu;\cdot)\|<\rho_1.$$
We have already seen that
$$\|u(t,\cdot)-Z_\infty\U(\mu;\cdot)\|\leq C\|\varrho_0^{-1}u_0-\U(\mu;\cdot)\|+\|Q(t)\U(W(t)\mu;\cdot)-Z_\infty\U(\mu;\cdot)\|,$$
with $\|Q\U(W\mu;\cdot)-Z_\infty\U(\mu;\cdot)\|\to0$ when $(W,Z)\to(W_\infty,Z_\infty).$

Let first treat the term $\|\varrho_0^{-1}u_0-\U(\mu;\cdot)\|.$
We have
\begin{eqnarray}\label{eq:Z-norm}
|\varrho_0-Z_\infty|&=&\left|\int (u_0-Z_\infty\U(\mu;x))\phi(\mu;x)\,dx\right|\nonumber\\
&\leq&\mu^{-k}\left(\int (u_0-Z_\infty\U(\mu;x))^2(x+x^r)\,dx\right)^{\f12}\left(\int\f{x^2}{x+x^r}\,dx\right)^{\f12}\nonumber\\
&=&C\|u_0-Z_\infty\U(\mu;\cdot)\|,
\end{eqnarray}
and also
\begin{eqnarray}\label{eq:norm-norm}
\|\varrho_0^{-1}u_0-\U(\mu;\cdot)\|&\leq&\varrho_0^{-1}\|u_0-\f{I_\infty}{\mu^{kp}M_p}\U(\mu;\cdot)\|+\varrho_0^{-1}|\varrho_0-Z_\infty|\|\U(\mu;\cdot)\|\nonumber\\
&\leq&\f{C}{\varrho_0}\|u_0-Z_\infty\U(\mu;\cdot)\|\nonumber\\
&\leq&\f{C}{Z_\infty-|\varrho_0-Z_\infty|}\|u_0-Z_\infty\U(\mu;\cdot)\|\nonumber\\
&\leq&\f{C\|u_0-Z_\infty\U(\mu;\cdot)\|}{Z_\infty-C\|u_0-Z_\infty\U(\mu;\cdot)\|},
\end{eqnarray}
so $\|\varrho_0^{-1}u_0-\U(\mu;\cdot)\|$ is small for $\|u_0-Z_\infty\U(\mu;\cdot)\|$ small enough.

Now let turn to the term $\|Q\U(W\mu;\cdot)-Z_\infty\U(\mu;\cdot)\|.$
Since it tends to zero when $(W,Z)$ tends to $(W_\infty,Z_\infty),$ it is sufficient to prove that $(W_\infty,Z_\infty)$ is stable for System~\eqref{eq:ODEWe}.
For $\eta>0,$ denote by ${\mathcal V}_\eta(W_\infty,Z_\infty)$ the connected component of $\{(W,Z),\ L(W,Z)<L(W_\infty,Z_\infty)+\eta\}$ which contains $(W_\infty,Z_\infty).$
Since $f'(I_\infty)<0,$ $(W_\infty,Z_\infty)$ is a strict local minimum of $L$ so we have, denoting $B(X,\rho)=\{Y\in\R^2,\ \|X-Y\|<\rho\},$
$$\forall\,\rho>0,\ \exists\,\eta>0,\quad {\mathcal V}_\eta(W_\infty,Z_\infty)\subset B((W_\infty,Z_\infty),\rho),$$
and reciprocally
$$\forall\,\eta>0,\ \exists\,\rho>0,\quad B((W_\infty,Z_\infty),\rho)\subset{\mathcal V}_\eta(W_\infty,Z_\infty).$$
So it is sufficient, to have the local stability of $(W_\infty,Z_\infty),$ to prove that ${\mathcal V}_\eta(W_\infty,Z_\infty)$ is stable.
This is true for System~\eqref{eq:ODEWZ} since in this case $L$ is a Lyapunov functional.
Then, by continuity of $f,$ there exists $\e_\eta$ such that ${\mathcal V}_\eta(W_\infty,Z_\infty)$ remains stable for System~\eqref{eq:ODEWe} if $|\e(t)|<\e_\eta$ for all $t>0.$
But we know from~\eqref{eq:ep} that $|\e(t)|\leq C\|\varrho_0^{-1}u_0-\U(\mu;\cdot)\|$
and from~\eqref{eq:norm-norm} that $\|\varrho_0^{-1}u_0-\U(\mu;\cdot)\|$ is small for $\|u_0-Z_\infty\U(\mu;\cdot)\|$ small enough.
Finally $\|Q\U(W\mu;\cdot)-Z_\infty\U(\mu;\cdot)\|$ is small for $\|u_0-Z_\infty\U(\mu;\cdot)\|$ small enough and the local asymptotical stability of the nontrivial steady states is proved.

\

Now we assume that $\kappa\equiv2$ and prove the local exponential stability of $Z_\infty\U(\mu;\cdot).$
In this case we have (see examples in \cite{DG}) the explicit formula
\beq\label{eq:explicitU}\U(x)=C\,e^{-\f\beta\gamma x^\gamma},\eeq
and due to this we can estimate the quantity $\|Q\U(W\mu;\cdot)-Z_\infty\U(\mu;\cdot)\|.$
We have
\begin{align*}
\|Q\,\U(W\mu;\cdot)-Z_\infty\U(\mu;\cdot)\|^2&=\int|Q\,\U(W\mu;x)-Z_\infty\U(\mu;x)|^2(x+x^r)\,dx\\
&\leq C|Z-Z_\infty|^2W^{-2kp}\int\U(W\mu;x)^2(x+x^r)\,dx\hspace{1.7cm}(i)\\
&\hspace{.2cm}+C|W^{-k(p+1)}-1|^2Z_\infty^2\int\U(W^{-k}\mu^{-k}x)^2(x+x^r)\,dx\hspace{.4cm}(ii)\\
&\hspace{.4cm}+CZ_\infty^2\int|\U(W^{-k}\mu^{-k}x)-\U(\mu^{-k}x)|^2(x+x^r)\,dx\hspace{.6cm}(iii)
\end{align*}
and we prove exponential decay of $(i),$ $(ii),$ and $(iii)$ in a neighbourhood of $(W_\infty,Z_\infty).$
We have, due to the L'H\^opital rule,
\beq\label{eq:hopitalF}F(Z)\underset{Z\to Z_\infty}{\sim}\frac{1}{-2Z_\infty f'(Z_\infty)} (\mu-f(Z))^2\eeq
and
\beq\label{eq:hopitalG}G(W)\underset{W\to W_\infty}{\sim}\f{1}{2W_\infty}(W-1)^2,\eeq
so the following local Poincar\'e inequality holds:
\beq\label{eq:entropy-entropy}\exists\,b>0,\ \rho>0,\quad \forall(W,Z)\in B((W_\infty,Z_\infty),\rho),\quad L(W,Z)\leq \f{1}{4b} D(W,Z).\eeq
Fix such a $\rho<1$ and fix $\eta>0$ such that ${\mathcal V}_\eta(W_\infty,Z_\infty)\subset B((W_\infty,Z_\infty),\rho).$
Consider $\|u_0-Z_\infty\U(\mu;\cdot)\|$ small enough so that $|\e(t)|$ remains smaller than $\e_\eta$ for all time.
Then ${\mathcal V}_\eta(W_\infty,Z_\infty)$ is stable for the dynamics of System~\eqref{eq:ODEWe}.
Now look at the term $E(W(t),Z(t),\e(t))$ for a solution $(W,Z)$ to System~\eqref{eq:ODEWe} in this stable neighbourhood.
It satisfies
\begin{eqnarray*}
E(W,Z,\e)&=&(\al A+pB)\bigl(f_p((1+\e)Z)-f_p(Z)\bigr)\\
&=&\sqrt{\omega\alpha}A\,\sqrt{\f\alpha\omega}\bigl(f_p((1+\e)Z)-f_p(Z)\bigr)+\sqrt{\omega p}B\,\sqrt{\f p\omega}\bigl(f_p((1+\e)Z)-f_p(Z)\bigr)\\
&\leq& \f\omega2(\alpha A^2+pB^2)+\f{\al+p}{2\omega}\bigl(f_p((1+\e)Z)-f_p(Z)\bigr)^2\\
&=& \f12 D(W,Z) + C\,(f_p((1+\e)Z)-f_p(Z))^2\\
&\leq&\f12 D(W,Z) + C\sup_{J}|f'|\,\e^2,
\end{eqnarray*}
where $J=[Z_\infty-\rho-\e_\eta,Z_\infty+\rho+\e_\eta].$
As a consequence we have
\begin{eqnarray*}
\f{d}{dt}L(W,Z)&\leq&-\f12D(W,Z)+C\e^2\\
&\leq&-2b L(W,Z)+C\|\varrho_0^{-1}u_0-\U(\mu;\cdot)\|^2e^{-2ah(t)},
\end{eqnarray*}
and the Gr\"onwall lemma gives
$$L(W,Z)\leq L(W_0,Z_0)e^{-2bt}+C\|\varrho_0^{-1}u_0-\U(\mu;\cdot)\|^2e^{-2bt}\int_0^te^{-2ah(s)+2bs}\,ds.$$
Since $\rho$ has been chosen to be less than $1=W_\infty,$ we can choose $b<a(1-\rho)$ in~\eqref{eq:entropy-entropy}
so that $b<a(1-\rho)<aW$ for all $(W,Z)$ in the stable neighbourhood ${\mathcal V}_\eta(W_\infty,Z_\infty).$
Then, since $\dot h=W$ and $h(0)=0,$ we have $-2ah(s)+2bs<-2\bigl(a(1-\rho)-b\bigr)s$ for all time $s>0,$ so there exists a constant $C>0$ such that
$$L(W,Z)\leq L(W_0,Z_0)e^{-2bt}+C\|\varrho_0^{-1}u_0-\U(\mu;\cdot)\|^2e^{-2bt}.$$
Now we use the equivalences~\eqref{eq:hopitalF} and \eqref{eq:hopitalG} to ensure the existence of $C>0$ such that
$$(W-1)^2+(\mu-f_p(Z))^2\leq C(\mu-f_p(Z_0))^2e^{-2bt}+C\|\varrho_0^{-1}u_0-\U(\mu;\cdot)\|^2e^{-2bt}.$$
Because $f'(I_\infty)\neq0,$ we can also find a constant $C>0$ such that
$$(W-1)^2+(Z-Z_\infty)^2\leq C(Z_0-Z_\infty)^2e^{-2bt}+C\|u_0-Z_\infty\U(\mu;\cdot)\|^2e^{-2bt}.$$
Using the Cauchy-Schwarz inequality as in \eqref{eq:Z-norm}, we find that
$$(Z_0-Z_\infty)^2\leq C\|u_0-Z_\infty\U(\mu;\cdot)\|^2,$$
so
$$(W-1)^2+(Z-Z_\infty)^2\leq C\|u_0-Z_\infty\U(\mu;\cdot)\|^2e^{-2bt}.$$
This inequality ensures that the terms $(i)$ and $(ii)$ decrease to zero exponentially fast.
It only remains to prove the same result for $(iii),$ and for this we use the explicit formula~\eqref{eq:explicitU}.
We obtain
\begin{align*}
\int_0^\infty \bigl(&\U(W^{-k}\mu^{-k}x)-\U(\mu^{-k}x)\bigr)^2(x+x^r)\,dx\\
&=C\int\left(e^{-\f{\beta}{\gamma\mu}(W^{-k}x)^\gamma}-e^{-\f{\beta}{\gamma\mu}x^\gamma}\right)^2(x+x^r)\,dx\\
&=C\int\left(e^{-\f{2\beta}{\gamma\mu}(W^{-k}x)^\gamma}+e^{-\f{2\beta}{\gamma\mu}x^\gamma}-2e^{-\f{\beta}{\gamma\mu}(1+W^{-1})x^\gamma}\right)(x+x^r)\,dx\\
&=C\int e^{-\f{2\beta}{\gamma\mu}y^\gamma}(W^ky+W^{rk}y^r)W^kdy+\int e^{-\f{2\beta}{\gamma\mu}x^\gamma}(x+x^r)\,dx\\
&\hspace{.4cm}-2\int e^{-\f{2\beta}{\gamma\mu}z^\gamma}\left(\left(\f{1+W^{-1}}{2}\right)^{-k}z+\left(\f{1+W^{-1}}{2}\right)^{-rk}z^r\right)\,\left(\f{1+W^{-1}}{2}\right)^{-k}dz\\
&=C\psi_1(W)\int e^{-\f{2\beta}{\gamma\mu}x^\gamma}x\,dx+C\psi_r(W)\int e^{-\f{2\beta}{\gamma\mu}x^\gamma}x^r\,dx
\end{align*}
where
$$\psi_r(W):=W^{(r+1)k}+1-2\left(\f{1+W^{-1}}{2}\right)^{-(r+1)k}.$$
Due to a Taylor expansion, we find that, locally,
$$|\psi_r(W)|\leq C(W-1)^2,$$
and so
$$\int_0^\infty \bigl(\U(W^{-k}\mu^{-k}x)-\U(\mu^{-k}x)\bigr)^2(x+x^r)\,dx\leq C\|u_0-Z_\infty\U(\mu;\cdot)\|^2e^{-2bt}.$$
Finally there exists a constant $C>0$ such that, for $\|u_0-Z_\infty\U(\mu;\cdot)\|$ small enough so that $(W,Z)$ stays in the neighbourhood ${\mathcal V}_\eta(W_\infty,Z_\infty),$ we have
\begin{eqnarray*}
\|u(t,\cdot)-Z_\infty\U(\mu;\cdot)\|&\leq& \|u(t,\cdot)-Q\U(W\mu;\cdot)\|+\|Q\U(W\mu;\cdot)-Z_\infty\U(\mu;\cdot)\|\\
&\leq& C\|u_0-Z_\infty\U(\mu;\cdot)\|e^{-bt}
\end{eqnarray*}
and we have shown the local exponential stability of the nontrivial steady states which satisfy $f'(I_\infty)<0$ in the case when $\kappa\equiv2.$

\

When $f'(I_\infty)>0,$ the steady state $(W_\infty,Z_\infty)$ is a saddle point of $L$ so it is unstable.
\end{proof}

\

We remark that the structure of the reduced system~\eqref{eq:ODEAB} is different for $p<1$ and $p>1.$
The nontrivial steady states are focuses in the case when $p<1$ and nodes for $p\geq1$ (see Figure~\ref{fig:convergence} for a numerical illustration in the case of Corollary~\ref{co:stability}).

\

\begin{figure}[h]

\begin{minipage}{0.45\textwidth}
\begin{center}
\includegraphics[width=.9\textwidth]{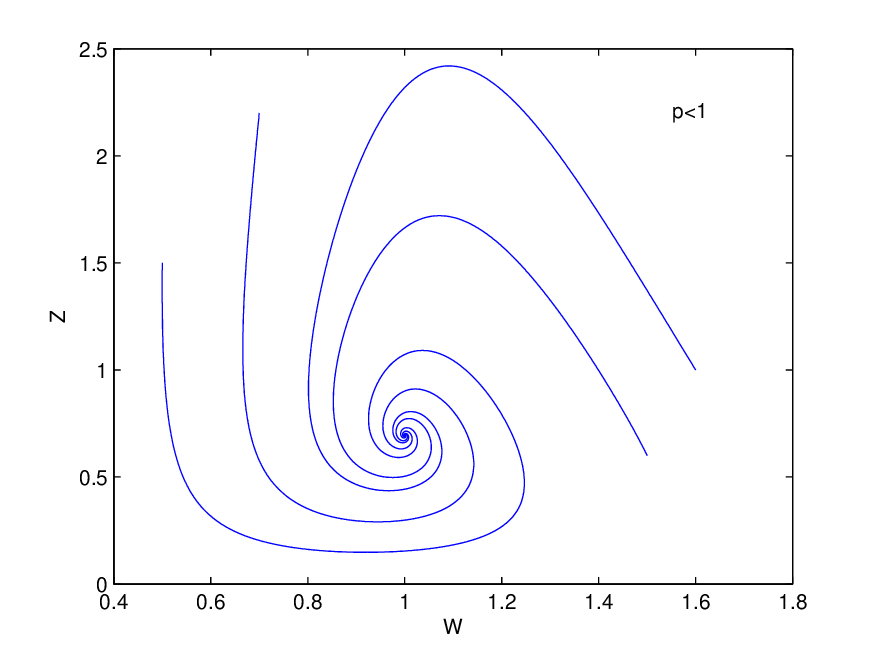}
\end{center}
\end{minipage}\hfill
\begin{minipage}{0.53\textwidth}
\begin{center}
\includegraphics[width=.9\textwidth]{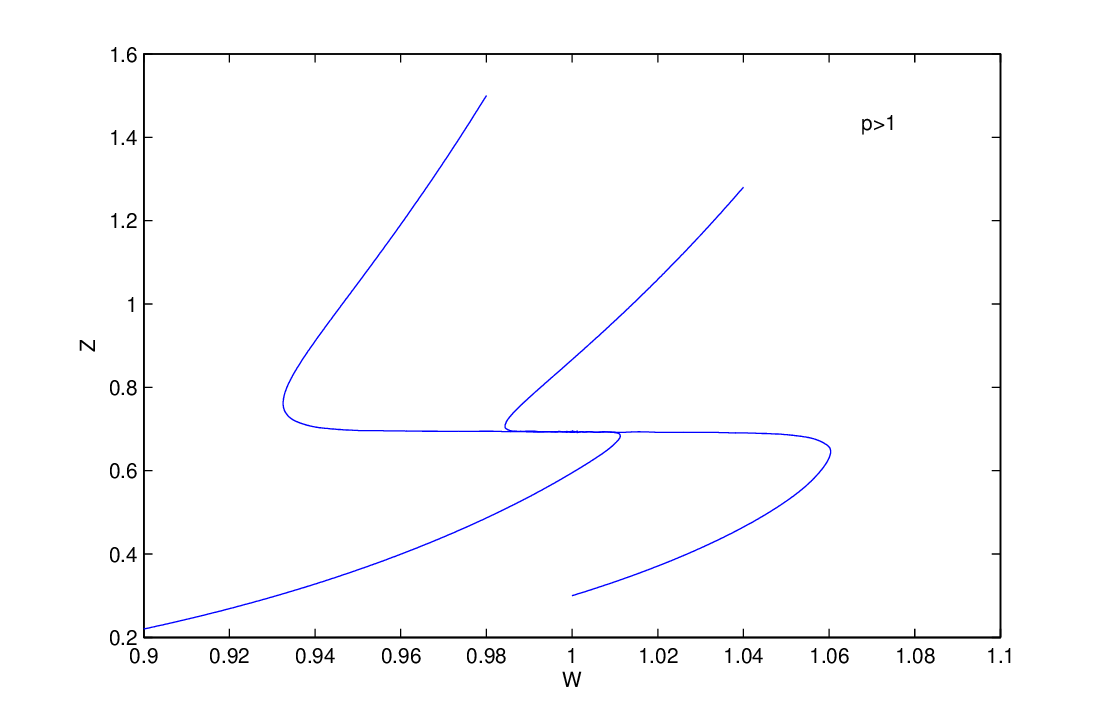}
\end{center}
\end{minipage}
\caption{Solutions to System~\eqref{eq:ODEAB} are plotted in the phase plane $(W,Z)$ for two different values of parameter $p.$
The other coefficients are $\gamma=0.1,$ $\mu=1$ and $f(x)=2e^{-x}.$
We can see that the steady state is a focus for $p<1$ (left) and a node for $p>1$ (right).}\label{fig:convergence}

\end{figure}

\

\section{Nonlinear Drift and Death Terms: Stable Persistent Oscillations}\label{sec:oscilld2}

We have seen in Theorem~\ref{th:nonlindrift:convergence} that any solution to the nonlinear equation~\eqref{eq:nonlinear} converges to a steady state.
Can this result be extended to Equation~\eqref{eq:nonlinear2} where the death rate is also nonlinear?
The result in Theorem~\ref{th:nonlindriftdeath} answers this question negatively.
Indeed it ensures the existence of functions $f$ and $g,$ and parameters $p$ and $q,$ such that Equation~\eqref{eq:nonlinear2} admits periodic solutions.
More precisely we prove, using the Poincar\'e-Bendixon theorem,
that any solution with an initial distribution in the eigenmanifold ${\mathcal E}$ which is not a steady state converges to a nontrivial periodic solution.
Then we extend this result by surrounding this set of initial distributions by an open neighbourhood in ${\mathcal H}.$

In the proof, we need to know the dependency of some quantities on the parameters $p$ and $q$.
Since we do not know the dependencies of $M_p=\int x^p\U(x)\,dx$ on $p,$ we consider an equation slightly different from \eqref{eq:nonlinear2}, namely
\beq\label{eq:nonlinear2bis}\f{\p}{\p t} u(t,x) = - f\left(\f{\int x^p u(t,x)}{\int x^p\U(x)}\right) \f{\p}{\p x} \bigl(xu(t,x)\bigr) - g\left(\f{\int x^q u(t,x)}{\int x^q\U(x)}\right) u(t,x) + {\mathcal F}_\gamma u(t,x).\eeq
Clearly the existence of functions $f$ and $g$ for which persistent oscillations appear in Equation~\eqref{eq:nonlinear2bis} ensures the same result for Equation~\eqref{eq:nonlinear2} (up to a dilation of $f$ and $g$).
Now let us make general assumptions on the two function $f$ and $g$ which allow one to obtain periodic oscillations.
Consider differentiable increasing functions $f$ and $g$ which satisfiy Assumption~\eqref{as:fandg} and define on $\R_+$ the function
\beq\label{def:psi}\psi(W):=f\left(W^{k(p-q)}g^{-1}(W)\right).\eeq
To ensure the existence and uniqueness of a nontrivial equilibrium, assume that
\beq\label{as:uniqueequilibrium}\exists !\,W_\infty\geq0,\quad \psi(W_\infty)=W_\infty\quad\text{and moreover}\quad \psi'(W_\infty)<1.\eeq
This steady state is unstable if, denoting $Q_\infty:=W_\infty^{-kq}g^{-1}(W_\infty),$ we have
\beq\label{as:instabilitycond}Q_\infty\left(pW_\infty^{kp}f'(W_\infty^{kp}Q_\infty)-W_\infty^{kq}g'(W_\infty^{kq}Q_\infty)\right)-\f{W_\infty}{k}>0.\eeq
Under these conditions, the solutions to Equation~\eqref{eq:nonlinear2bis} with an initial distribution close to the set ${\mathcal E}\setminus\{Q_\infty\U(W_\infty;\cdot)\}$ exhibit asymptotically periodic behaviors.
More precisely, we have the following result.

\medskip

\begin{theorem}
\label{th:PDEoscill}
Consider increasing differentiable functions $f$ and $g$ satisfying conditions~\eqref{as:fandg}, \eqref{as:uniqueequilibrium}, and \eqref{as:instabilitycond},
a parameter $\gamma\in(0,2],$ and a fragmentation kernel $\kappa$ which satisfies Assumption~\eqref{as:boundfragker}.
Then there exists an open neighbourhood ${\mathcal V}$ of ${\mathcal E}\setminus\{Q_\infty\U(W_\infty;\cdot)\}$ in ${\mathcal H}$ such that,
for any initial distribution $u_0\in{\mathcal V},$ there exist periodic functions $W(t)$ and $Q(t)$ such that
\beq\label{eq:convperiodic}\|u(t,\cdot)-Q(t)\U(W(t);\cdot)\|_{\mathcal H}\xrightarrow[t\to\infty]{}0.\eeq
\end{theorem}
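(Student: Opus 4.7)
The plan follows the two-step structure of the proof of Theorem~\ref{th:convergence}: first reduce the PDE restricted to the invariant manifold $\mathcal{E}$ to an autonomous planar ODE system and invoke the Poincar\'e--Bendixon theorem there; then propagate the limit-cycle behaviour to an open $\mathcal{H}$-neighbourhood of $\mathcal{E}\setminus\{Q_\infty\U(W_\infty;\cdot)\}$ using the spectral gap theorem of \cite{CCM}.

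For $u_0 = Q_0\,\U(W_0;\cdot)\in\mathcal{E}$, Corollary~\ref{co:nu1} guarantees that $u(t,x) = Q(t)\,\U(W(t);x)$ remains in $\mathcal{E}$, where $(W,Q)$ satisfies the autonomous system
\begin{equation*}
\dot W = \tfrac{W}{k}\bigl(f(W^{kp}Q) - W\bigr), \qquad \dot Q = Q\bigl(W - g(W^{kq}Q)\bigr),
\end{equation*}
obtained by substituting the self-similar moments $\int x^\alpha u = W^{k\alpha} Q \int x^\alpha \U$ into the coefficients. Solving $W = g(W^{kq}Q)$ for $Q = W^{-kq}g^{-1}(W)$ and plugging into $W = f(W^{kp}Q)$ reduces the nontrivial equilibrium condition to $W = \psi(W)$, so by Assumption~\eqref{as:uniqueequilibrium} the unique positive equilibrium is $(W_\infty,Q_\infty)$. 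A direct Jacobian computation shows that $\operatorname{tr} J(W_\infty,Q_\infty)$ equals the left-hand side of~\eqref{as:instabilitycond}, while $\psi'(W_\infty)<1$ implies $\det J(W_\infty,Q_\infty)>0$; hence~\eqref{as:instabilitycond} makes $(W_\infty,Q_\infty)$ an unstable source. Using that $f$ is bounded with $f(0)>0$ and that $g(0)=0$, $g(\infty)=\infty$, one constructs a compact positively invariant region $K\subset(0,\infty)^2$ containing $(W_\infty,Q_\infty)$ in its interior by checking that the vector field points inward on the boundary (small/large $W$ and $Q$). Poincar\'e--Bendixon then yields a periodic orbit $\Gamma\subset K$ as the $\omega$-limit of every trajectory starting in $K\setminus\{(W_\infty,Q_\infty)\}$; in particular the outermost cycle in $K$ is one-sidedly asymptotically stable.

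To extend the result to a $\mathcal{H}$-neighbourhood, I mimic the second step of the proof of Theorem~\ref{th:convergence}. Given $u_0\in\mathcal{H}^+$, let $(W,Q)$ solve the same planar system but driven by the actual moments of $u$, and set $v(h(t),x) := W^k(t)\,u(t,W^k(t)x)\exp(-\int_0^t(\tau W - \mu R)\,ds)$ with $\dot h = W$. By Theorem~\ref{th:nu1}, $v$ solves the linear equation~\eqref{eq:linear}, so the spectral gap theorem of \cite{CCM} yields $\|\varrho_0^{-1}v(h(t),\cdot) - \U(\mu;\cdot)\|_\mathcal{H}\leq C\|\varrho_0^{-1}u_0 - \U\|_\mathcal{H}\,e^{-ah(t)}$. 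As in~\eqref{eq:ep}, Cauchy--Schwarz then bounds the discrepancies $\varepsilon_p(t), \varepsilon_q(t)$ between $\int x^\alpha u/M_\alpha$ and $W^{k\alpha}Q$ (for $\alpha=p,q$) by $C\|\varrho_0^{-1}u_0-\U(\mu;\cdot)\|_\mathcal{H}\,e^{-ah(t)}$, so the reduced ODE for $(W,Q)$ differs from the unperturbed one only by a term vanishing exponentially. Since $\Gamma$ is asymptotically orbitally stable for the unperturbed dynamics, a standard vanishing-perturbation argument inside an annular trapping region around $\Gamma$ forces $(W(t),Q(t))$ to approach some periodic orbit $(W_\ast(t),Q_\ast(t))$, and the conclusion~\eqref{eq:convperiodic} follows from the triangle inequality together with the norm estimate analogous to~\eqref{eq:u-QU}.

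The main obstacles I anticipate are the explicit construction of the invariant region $K$ — the naive rectangle need not be invariant, because the two ODEs are coupled through $W^{kp}Q$ and $W^{kq}Q$, so one may need a trapping region adapted to the nullclines — and, more seriously, upgrading the existence of a periodic orbit given by Poincar\'e--Bendixon to enough orbital stability for the perturbation step to apply. The cleanest route is to argue that every trajectory in the punctured neighbourhood $K\setminus\{(W_\infty,Q_\infty)\}$ has the same $\omega$-limit, by a monotone-cross-section argument in an annular region between $(W_\infty,Q_\infty)$ and the outer boundary of $K$; once the attracting cycle is identified, the exponential decay of $\varepsilon_p,\varepsilon_q$ makes the lift to $\mathcal{H}$ routine.
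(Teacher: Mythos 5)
Your first step matches the paper's reduction to System~\eqref{eq:nonlinODE2} and the Jacobian trace/determinant computation, and the use of boundedness plus Poincar\'e--Bendixon is the same. However, your second step contains a genuine gap, which you yourself flag as the ``more serious'' obstacle: you want to argue that the limit cycle $\Gamma$ from the first step is asymptotically orbitally stable (or that every trajectory in $K\setminus\{(W_\infty,Q_\infty)\}$ has the same $\omega$-limit), and then run a vanishing-perturbation argument around $\Gamma$. Poincar\'e--Bendixon does not give you this: it only tells you that each trajectory's $\omega$-limit set is a closed orbit, with no uniqueness or stability assertion, and there could in principle be nested limit cycles. The theorem's statement carefully says ``there exist periodic functions $W(t)$ and $Q(t)$'' depending on $u_0$, precisely because one cannot pin down a single attracting cycle.

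The paper avoids this entirely by invoking Lemma~\ref{lm:ODE}: for any bounded solution of the perturbed system $\dot X = F(X;\varepsilon(t))$ with $\varepsilon(t)\to 0$, a compactness/Ascoli argument produces a solution $X^0$ of the unperturbed system with the same $\omega$-limit set. Thus one does not need $\Gamma$ to be asymptotically stable, only that (i) the trajectory is eventually trapped in a compact set that excludes a small ball around $(W_\infty,Q_\infty)$ (achieved via the outgoing flux on a sphere around the source, robust under small $\varepsilon_p,\varepsilon_q$, together with the a priori bounds keeping $(W,Q)$ away from the coordinate axes), and (ii) the first step guarantees that any unperturbed trajectory in that punctured region has a limit cycle as its $\omega$-limit set. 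This is both weaker to verify and sufficient for~\eqref{eq:convperiodic}. To repair your proof, replace the orbital-stability/vanishing-perturbation argument with a statement and proof of this compactness lemma and apply it as the paper does; the rest of your outline, including the $\varepsilon_p,\varepsilon_q$ estimates via the spectral gap theorem and the final triangle inequality in $\mathcal{H}$, is correct.
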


\medskip

Before proving this theorem, we check that, if functions $f$ and $g$ satisfy either~\eqref{as:example1} or~\eqref{as:example2},
then Assumptions~\eqref{as:uniqueequilibrium} and \eqref{as:instabilitycond} are satisfied for well chosen parameters $p$ and $q.$
Thus Theorem~\ref{th:nonlindriftdeath} is a consequence of Theorem~\ref{th:PDEoscill}.

\

\noindent{\bf Example 1.} [Assumption~\eqref{as:example1}]
Assume that there exists $C>0$ such that for all $x\geq0,$ $g(x)\leq C\,xg'(x).$ Then $\psi(W)=W$ has a unique solution for $k(q-p)>C.$
Indeed, if we compute the derivative of $\psi$ we find
$$\psi'(W)=W^{k(p-q)}\left(k(p-q)\f{g^{-1}(W)}{W}+(g^{-1})'(W)\right)\,f'\left(W^{k(p-q)}g^{-1}(W)\right)$$
and $g(x)\leq C\,xg'(x)$ implies that $x(g^{-1})'(x)\leq C\,g^{-1}(x).$
So if $k(q-p)>C,$ $\psi$ decreases and Assumption~\eqref{as:uniqueequilibrium} is fulfilled.
If moreover $f(1)=g(1)=1,$ then the unique nontrivial equilibrium is given by $W_\infty=1.$
Then condition~\eqref{as:instabilitycond} is satisfied for $p>\f{g'(1)+\f1k}{f'(1)}.$

\

\noindent{\bf Example 2.} [Assumption~\eqref{as:example2}]
Consider the case $g(x)=x$ and $p=q,$ and assume that $f(x)-x$ has a unique root $x_0$ and $f'(x_0)-1<0.$
Then $\psi(W)=f(W)$ and Assumption~\eqref{as:uniqueequilibrium} is satisfied.
Moreover, condition~\eqref{as:instabilitycond} writes $pf'(W_\infty)>1+\f1k,$ so it is satisfied for $p$ large enough.

\

Now we give a lemma useful for the proof of Theorem~\ref{th:PDEoscill}.

\medskip

\begin{lemma}\label{lm:ODE}
Consider a dynamical system in $\R^n$ with a parameter $\e(t):$
\beq\label{eq:DySy}\dot X=F(X;\e(t)),\eeq
with $F\in{\mathcal C}(\R^n\times\R^n).$
Assume that for any vanishing parameter $\|\e(t)\|\xrightarrow{t\to\infty}0$ the solutions to Equation~\eqref{eq:DySy} are bounded.
Then for any solution $X^\e$ associated to $\|\e(t)\|\xrightarrow{t\to\infty}0,$ there exists a solution $X^0$ associated to $\e\equiv0$ such that $X^\e$ and $X^0$ have the same $\omega-$limit set.
\end{lemma}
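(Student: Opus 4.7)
The plan is to construct $X^0$ as a locally uniform limit of time-shifted copies of $X^\e$ via Arzel\`a--Ascoli, then verify that the two $\omega$-limit sets coincide. Throughout, denote by $\omega_\e$ the $\omega$-limit set of $X^\e$, which by the boundedness hypothesis is a nonempty compact (and connected) subset of a ball $K\subset\R^n$ containing the whole trajectory of $X^\e$.

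First I would pick $p\in\omega_\e$ and $t_n\to+\infty$ with $X^\e(t_n)\to p$, and introduce the shifts
$$Y_n(s):=X^\e(t_n+s),\qquad s\in[-t_n,+\infty),$$
which satisfy $\dot Y_n(s)=F(Y_n(s);\e(t_n+s))$. Since the $Y_n$ take values in $K$ and $\e$ is bounded, continuity of $F$ on $K\times\{\|\e\|\le\sup_t\|\e(t)\|\}$ forces the derivatives $\dot Y_n$ to be uniformly bounded in $n$ and in $s$ on every fixed compact interval; hence $(Y_n)$ is equibounded and equicontinuous on each such interval, and a diagonal Arzel\`a--Ascoli extraction produces a subsequence converging locally uniformly on $\R$ to a continuous $X^0:\R\to K$. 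Passing to the limit in the integral formulation $Y_n(s)=Y_n(0)+\int_0^s F(Y_n(r);\e(t_n+r))\,dr$, using continuity of $F$ together with $\e(t_n+r)\to 0$ uniformly on compact $r$-intervals, yields
$$X^0(s)=p+\int_0^s F(X^0(r);0)\,dr,$$
so that $X^0$ is an autonomous solution through $p$.

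For every $s\in\R$, the point $X^0(s)=\lim_n X^\e(t_n+s)$ is accumulated along the trajectory of $X^\e$ at times $t_n+s\to+\infty$, so $X^0(s)\in\omega_\e$; hence the whole orbit of $X^0$ lies in the closed set $\omega_\e$, and this immediately gives $\omega(X^0)\subseteq\omega_\e$. For the reverse inclusion $\omega_\e\subseteq\omega(X^0)$ one cannot start from an arbitrary $p$: the first construction, applied at every point of $\omega_\e$, shows that $\omega_\e$ is a compact invariant set for the autonomous flow $\phi^0$, so Birkhoff's recurrence theorem furnishes a recurrent point $p\in\omega_\e$; taking $X^0$ to be the autonomous orbit through such a $p$ and combining recurrence of $p$ with the connectedness of $\omega_\e$ yields, by a standard topological-dynamics argument, $\omega(X^0)=\omega_\e$.

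The main obstacle is precisely this reverse inclusion. The Arzel\`a--Ascoli step only produces some autonomous orbit lying inside $\omega_\e$, which a priori might accumulate on a proper minimal subset; the careful choice of the base point $p$ as a recurrent point of $\phi^0|_{\omega_\e}$, and the exploitation of the connected structure of $\omega_\e$, is where the real work lies.
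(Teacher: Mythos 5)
Your Arzel\`a--Ascoli construction reproduces the paper's proof essentially verbatim: shift the trajectory by times $t_n\to\infty$, use boundedness of $X^\e$ and $\dot X^\e$ to extract a locally uniform limit $X^0$, and pass to the limit in the integral equation to see that $X^0$ solves the autonomous system. The paper then simply declares ``we take $W^0:=W_\infty$, that ends the proof'' and never examines the two inclusions $\omega(X^0)\subseteq\omega_\e$ and $\omega_\e\subseteq\omega(X^0)$. You correctly observe that only the forward inclusion is automatic (the orbit of $X^0$ lies inside the closed invariant set $\omega_\e$) and that the reverse inclusion is where the content actually is; in that respect your account is more honest than the paper's.

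Your proposed fix for the reverse inclusion, however, does not close the gap. Birkhoff's theorem supplies a recurrent point $p\in\omega_\e$ for the autonomous flow restricted to $\omega_\e$, so that $p$ and therefore the orbit closure of $p$ sit inside $\omega(X^0)$; but recurrence of $p$ does not make the orbit of $p$ dense in $\omega_\e$, and connectedness does not repair this. A compact connected invariant set --- even a chain transitive one, which is what the Markus--Thieme theory of asymptotically autonomous systems actually guarantees for $\omega_\e$ --- need not contain any dense orbit. The standard example is a closed planar annulus bounded by two periodic orbits, with every interior orbit spiraling from one boundary circle to the other: the recurrent points are exactly those on the two boundary circles, and the orbit through any such $p$ has only that one circle, not the whole annulus, as its $\omega$-limit set. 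What your argument needs is a \emph{transitive} point, and its existence is an additional hypothesis rather than a consequence of recurrence plus connectedness. In the paper's application the obstruction disappears for a different reason: the system is planar, the trajectory is kept away from the unique unstable equilibrium, and a Poincar\'e--Bendixson argument then forces $\omega_\e$ to be a single periodic orbit, on which every point is trivially transitive --- but that invokes structure well beyond the lemma's stated hypotheses. As a statement about general dynamical systems in $\R^n$ the lemma is doubtful, and neither your argument nor the paper's one-line proof establishes it.
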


\

\begin{proof}[Proof of Lemma~\ref{lm:ODE}]

Let $X(t)$ be a solution to System~\eqref{eq:DySy} with $\|\e(t)\|\to0.$
By assumption, $X(t)$ is bounded, so $\dot X(t)$ is also bounded since $F$ is continuous.
Now consider a sequence $\{t_k\}_{k\in\N}$ which tends to infinity and define the sequence $\{X_k(\cdot)\}$ by $X_k(t)=X(t+t_k).$
This sequence is bounded in $W^{1,\infty}(\R_+),$ so there exists a subsequence which converges to $X_\infty(\cdot).$
This limit is a solution to Equation~\eqref{eq:DySy} with $\e\equiv0.$
We take $X^0:=X_\infty,$ which ends the proof of Lemma~\ref{lm:ODE}.
\end{proof}

\

\begin{proof}[Proof of Theorem~\ref{th:PDEoscill}]
We divide the proof in two parts: first the result for $u_0\in{\mathcal E}$ and then the existence of a neighbourhood ${\mathcal V}$ of ${\mathcal E}$ in ${\mathcal H}$ where the result persists.

\

\noindent{\bf First step: $u_0\in{\mathcal E}.$}\\
For $u_0\in{\mathcal E}\setminus\{0\},$ there are $W_0>0$ and $Q_0>0$ such that
$$u_0(x)=Q_0\U(W_0;x).$$
Then, if $u(t,x)$ is the solution to Equation~\eqref{eq:nonlinear2bis} and $W$ is the solution to
$$\dot W=\f Wk\left(f\left(\f{M_p[u](t)}{M_p[\U]}\right)-W\right)$$
with $W(0)=W_0,$ the relation holds for all $t>0$ and $x>0:$
$$u(t,x)=Q(t)\U(W(t);x),$$
where $Q(t):=Q_0 e^{\int_0^t(W(s)-g(M_q[u](s)/M_q[\U]))\,ds}.$
Then we can compute
$$M_p[u](t)=\int_0^\infty x^pu(t,x)\,dx=W^{kp}(t)Q(t)\,M_p[\U],$$
and finally we obtain the reduced system of ODEs satisfied by $(W,Q):$
\begin{equation}\label{eq:nonlinODE2}
\left\{\begin{array}{rcl}
\dot W&=&\dis\f W k\left(f\left(W^{kp}Q\right)-W\right),
\vspace{.2cm}\\
\dot Q&=&\dis Q\left(W-g\left(W^{kq}Q\right)\right).
\end{array}\right.
\end{equation}
We prove that System~\eqref{eq:nonlinODE2} has bounded solutions and a unique positive steady state which is unstable.
Then we use the Poincar\'e-Bendixon theorem to ensure the convergence to a limit cycle.

The fact that $0<f(0)\leq f\leq f(\infty)<\infty$ and that $g$ increases from $0$ to the $\infty$ ensures that the solution remains bounded.
Let $(W_\infty,Q_\infty)$ be a positive steady state.
It satisfies
$$W_\infty=f\left(W^{kp}Q\right)=g\left(W^{kq}Q\right)$$
and so, since $g$ is invertible, $Q_\infty=W_\infty^{-kq}g^{-1}(W_\infty).$
Then $W_\infty$ is solution to the equation
$$W_\infty=f\left(W_\infty^{k(p-q)}g^{-1}(W_\infty)\right)=\psi(W_\infty),$$
and Assumption~\eqref{as:uniqueequilibrium} ensures the uniqueness of such a solution.
Now look at the stability of this positive steady state.
We write system~\eqref{eq:nonlinODE2} in the form
$$\left(\begin{array}{c}
\dot W\\
\dot Q
\end{array}\right)
=F\left(\begin{array}{c}
W\\
Q
\end{array}\right),$$
so we have
$$Jac(F)_{eq}=\left(\begin{array}{cc}
pW_\infty^{kp}Q_\infty f'(W_\infty^{kp}Q_\infty)-\f1k W_\infty&\f{1}{k}W_\infty^{kp+1}f'(W_\infty^{kp}Q_\infty)\\
Q_\infty-kqW_\infty^{kq-1}Q_\infty^2 g'(W_\infty^{kq}Q_\infty)&-W_\infty^{kq}Q_\infty g'(W_\infty^{kq}Q_\infty)
\end{array}\right).$$
The trace of this matrix is
$$T=Q_\infty\left(pW_\infty^{kp}f'(W_\infty^{kp}Q_\infty)-W_\infty^{kq}g'(W_\infty^{kq}Q_\infty)\right)-\f{W_\infty}{k}$$
and the determinant is
$$D=\f Wk Q\left( W^{kq}g'(W^{kq}Q)- W^{kp}f'(W^{kp}Q)\right)+(q-p)W^{k(p+q)}Q^2f'(W^{kp}Q)g'(W^{kq}Q).$$
We know from Assumption~\eqref{as:uniqueequilibrium} that $\psi'(W_\infty)<1$ and, if we compute $\psi'(W),$ we find
$$\psi'(W)=\left[k(p-q)W^{k(p-q)-1}g^{-1}(W)+\f{W^{k(p-q)}}{g'(g^{-1}(W))}\right]f'(W^{k(p-q)}g^{-1}(W)).$$
Since $g^{-1}(W)=W^{kq}Q$ we finally obtain
$$D=\f1k W_\infty^{kq+1}Q_\infty\, g'(W_\infty^{kq}Q_\infty)(1-\psi'(W_\infty))>0.$$
Thus when $T>0,$ namely when Assumption~\eqref{as:instabilitycond} is satisfied, the two eigenvalues have positive real parts and the positive steady state is unstable.
Now we prove that $(W,Q)$ remains away from the boundaries of $(\R_+)^2.$
For this we write that
$$\forall t>0,\quad \underline W:=\min(W_0,f(0))\leq W(t)\leq\max(W_0,f(\infty)):=\overline W,$$
and then
$$Q\geq\min(Q_0,{\overline W}^{-kp}g^{-1}(\underline W)).$$
Since $f(0)>0,$ $\underline W>0$ for $W_0>0$ so any solution with $W_0>0$ and $Q_0>0$ stays a positive distance from the boundaries of $(\R_+)^2.$
Then the Poincar\'e-Bendixon theorem (see~\cite{hofbauer} for instance) ensures that any solution to System~\eqref{eq:nonlinODE2} with $W_0>0,$ $Q_0>0,$ and $(W_0,Q_0)\neq(W_\infty,Q_\infty)$ converges to a limit cycle.

\

\noindent{\bf Second step: Existence of ${\mathcal V}.$}\\
Let $u_0\not\equiv0$ in ${\mathcal H}$ and build from $u(t,x),$ a solution to Equation~\eqref{eq:nonlinear2bis} with initial distribution $u_0,$ a function $v$ by
$$v(h(t),x)=W^k(t)u(t,W^k(t)x)e^{\int_0^t(g(M_q[u](s)/M_q[\U])-W(s))\,ds},$$
with $W$ a solution to
$$\dot W=\f Wk\left(f\left(\f{M_p[u]}{M_p[\U]}\right)-W\right),$$
and $h$ solution to $\dot h=W$ with $h(0)=0.$
We have already seen in Section~\ref{ssec:GRE} that $h:\R_+\to\R_+$ is one to one since $h(t)\geq k\ln(1+\f tk).$
We take $W(0)=1$ to have $v(t=0,\cdot)=u(t=0,\cdot)=u_0.$
Due to Theorem~\ref{th:nu1} we know that $v$ is a solution to
$$\p_tv(t,x)+\p_x\bigl(x\,v(t,x)\bigr)+v(t,x)={\mathcal F}_\gamma v(t,x),$$
and the GRE ensures the convergence
$$v(t,x)\xrightarrow[t\to\infty]{}\left(\int\phi(x)u_0(x)\,dx\right)\U(x).$$
As a consequence we have the equivalences, for any $p\geq0,$
$$M_p[u](t)\underset{t\to\infty}{\sim}\varrho_0 M_p[\U]\,W^{kp} e^{\int_0^t(W(s)-g(M_q[u](s)/M_q[\U]))\,ds}$$
so, if we define $Q(t):=\varrho_0 e^{\int_0^t(W(s)-g(M_q[u](s)/M_q[\U])\,ds},$ we find that the reduced system~\eqref{eq:nonlinODE2} is ``asymptotically equivalent'' to Equation~\eqref{eq:nonlinear2bis}.
More precisely, defining
$$\e_p(t)=\f{M_p[u](t)}{M_p[\U]W^{kp}(t)Q(t)}-1$$
as in the proof of Theorem~\ref{th:convergence}, we have that $\e_p\to0$ and $(W,Q)$ is solution to
\begin{equation}\label{eq:nonlinODE2e}
\left\{\begin{array}{rcl}
\dot W&=&\dis\f W k\left(f\left((1+\e_p)W^{kp}Q\right)-W\right),
\vspace{.2cm}\\
\dot Q&=&\dis Q\left(W-g\left((1+\e_q)W^{kq}Q\right)\right).
\end{array}\right.
\end{equation}
Now we prove that if $W_0$ and $Q_0$ are positive and $(W_0,Q_0)\neq(W_\infty,Q_\infty),$ then if $\|u_0-Q_0\U(W_0;\cdot)\|_{\mathcal H}$ is small enough,
the solution $u$ to Equation~\eqref{eq:nonlinear2bis} converges to a periodic solution.
Denote by $d$ the distance between $(W_0,Q_0)$ and $(W_\infty,Q_\infty).$
Since $(W_\infty,Q_\infty)$ is a source for System~\eqref{eq:nonlinODE2}, there exists a ball with radius $\rho<d$ such that the flux is outgoing, namely
\beq\label{eq:outgoflux}\forall\,(W,Q)\in\p B((W_\infty,Q_\infty),\rho),\quad F(W,Q)\cdot {\bf n}>0,\eeq
where ${\bf n}$ is the outgoing normal of $B((W_\infty,Q_\infty),\rho).$
Then, if we define by $F(W,Q;\e_p,\e_q)$ the flux of Equation~\eqref{eq:nonlinODE2e},
we have by continuity of $f$ and $g$ that there exists $\e_0$ such that~\eqref{eq:outgoflux} remains true for $F(W,Q;\e_p,\e_q)$ provided that $\e_p$ and $\e_q$ stay less than $\e_0.$
But we know from the proof of Theorem~\ref{th:convergence} that there exists a constant $C_p>0$ such that for all time $t>0,$ $\e_p\leq C_p\|u_0-Q_0\U(W_0;\cdot)\|.$
So for $\|u_0-Q_0\U(W_0;\cdot)\|\leq\f{\e_0}{C_p+C_q},$ the solution to System~\eqref{eq:nonlinODE2e} cannot converge to the positive steady state $(W_\infty,Q_\infty).$
Thanks to the same arguments, if $\|u_0-Q_0\U(W_0;\cdot)\|$ is small enough, then $(W,Q)$ remains away from the boundaries of $(\R_+)^2.$
We obtain due to Lemma~\ref{lm:ODE} that for $\|u_0-Q_0\U(W_0;\cdot)\|$ small enough, $(W(t),Q(t))$ converges to a limit cycle $(\widetilde W(t),\widetilde Q(t)).$
Then we write
$$\|u-\widetilde Q\U(\widetilde W;\cdot)\|\leq\|u-Q\U(W;\cdot)\|+\|Q\U(W;\cdot)-\widetilde Q\U(\widetilde W;\cdot)\|$$
and we conclude as in the proof of Theorem~\ref{th:convergence} that the solution $u$ to Equation~\ref{eq:nonlinear2} converges in ${\mathcal H}$ to $\widetilde Q\,\U(\widetilde W;\cdot).$
Finally we have proved, for any $(W_0,Q_0)\in(\R_+^*)^2\setminus\{(W_\infty,Q_\infty)\},$
the existence of a ball centered in $Q_0\U(W_0;\cdot)$ such that any solution to Equation~\eqref{eq:nonlinear2bis} with an initial distribution in this ball converges to a periodic solution.
Then Theorem~\ref{th:PDEoscill} is proved for ${\mathcal V}$ the union of all these balls.

\end{proof}

\bigskip

To illustrate the convergence to a periodic solution for solutions to Equation~\eqref{eq:nonlinear2bis},
we plot in Figure~\ref{fig:oscillations} a solution to Equation~\eqref{eq:nonlinODE2} with an initial distribution close to the steady state $(W_\infty,Q_\infty)$
and for coefficients which satisfy the assumptions of Theorem~\ref{th:PDEoscill}.

\

\begin{figure}[h]

\begin{minipage}{0.49\textwidth}
\begin{center}
\includegraphics[width=.9\textwidth]{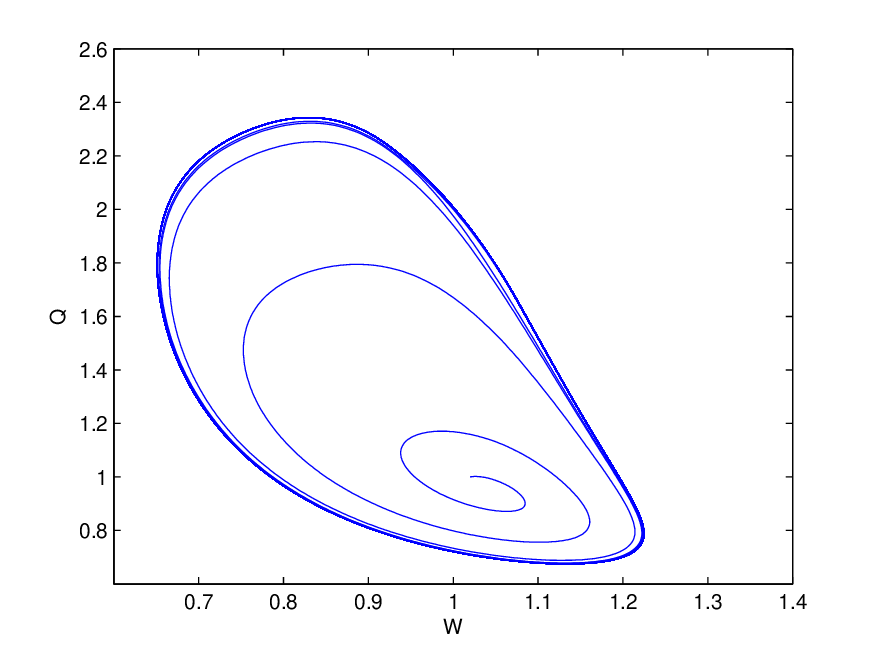}
\end{center}
\end{minipage}\hfill
\begin{minipage}{0.49\textwidth}
\begin{center}
\includegraphics[width=.9\textwidth]{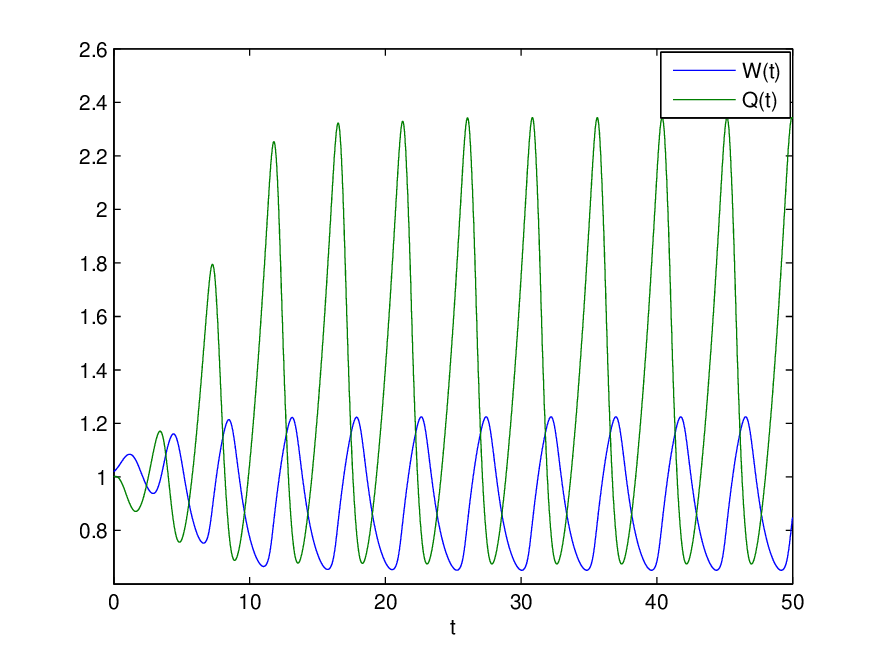}
\end{center}
\end{minipage}
\caption{A solution to System~\eqref{eq:nonlinODE2} is plotted in the phase plane $(W,Q)$ (left) and as a function of the time (right).
The coefficients are $\gamma=1,$ $p=2,$ $q=5,$ $f(x)=1+e^{-1}-e^{-x^4},$ and $g(x)=0.9 x.$}\label{fig:oscillations}

\end{figure}

\

\section{The Prion Equation: Existence of Periodic Solutions}\label{sec:prion}

Prion diseases are believed to be due to self-replication of a pathogenic protein through a polymerization process not yet very well understood (see \cite{Lenuzza} for more details).
To investigate the replication process of this protein, a mathematical PDE model was introduced by \cite{Greer}.
We recall this model under a form slightly different from the original one (see \cite{CL1,DGL} for the motivations to consider this form):
\begin{equation}
\label{eq:prion}
\left\{
\begin{array}{rll}
\dfrac{dV(t)}{dt}&=&\displaystyle \lambda- V(t)\left[\delta+ \int_{0}^{\infty} \tau(x) u(t,x) \; dx\right],
\vspace{.2cm}\\
\dfrac{\partial}{\partial t} u(t,x) &=& \displaystyle - V(t) \frac{\partial}{\partial x} \big(\tau(x) u(t,x)\big) - \mu(x) u(t,x) + {\mathcal F}u(t,x).
\end{array} \right.
\end{equation}
In this equation, $u(t,x)$ represents the quantity of polymers of pathogenic proteins of size $x$ at time $t,$ and $V(t)$ the quantity of normal proteins (also called monomers).
The polymers lengthen by attaching monomers with the rate $\tau(x),$ die with the rate $\mu(x),$ and split into smaller polymers with respect to the fragmentation operator ${\mathcal F}.$
The quantity of monomers is driven by an ODE with a death parameter $\delta$ and production rate $\lambda.$
This ODE is quadratically coupled to the growth-fragmentation equation because of the polymerization mechanism, which is assumed to follow the mass action law.

This system admits a trivial steady state, also called disease-free equilibrium since it corresponds to a situation where no pathogenic polymer is present:
\mbox{$V=\f\lambda\delta$} and \mbox{$u\equiv0.$}
The stability of this steady state has been investigated in \cite{CL2,CL1,SimonettWalker,Walker} under general assumptions on the coefficients.
It depends on the sign of the principal eigenvalue of the linear growth-fragmentation with a frozen transport term $V=\f\lambda\delta.$
The existence of nontrivial steady states (also called endemic equilibria) has also been investigated, and it is proved in~\cite{CDG} that several can exist.
But the stability (even linear) of these nontrivial steady states is a difficult and still open problem for general coefficients.
The only existing results concern the ``constant case'' ($\tau$ constant, $\beta$ linear and $\kappa$ constant) initially considered by \cite{Greer},
since then the model reduces to a closed system of ODEs.
In this case, the problem has been entirely solved by \cite{Engler,Greer,Pruss}:
the disease-free steady state is globally stable when it is the only equilibrium, and, when an endemic equilibrium exists, this endemic equilibrium is unique and globally stable.

A new, more general model has been introduced in \cite{Greer2} and takes into account the incidence of the {\it total mass} of polymers $P(t):=\int xu(t,x)\,dx$ on the polymerization process.
More precisely, they consider that the presence of many polymers reduces the attaching process of monomers to polymers
by multiplying the polymerization rate by $\f{1}{1+\omega P(t)}$ with $\omega$ a positive parameter.
Then they prove similar results about the existence and stability of steady states, still in the case of constant parameters.

Here we look at a generalization of the influence of polymers on the polymerization rate by considering the system
\begin{equation}
\label{eq:Webbgen}
\left\{
\begin{array}{rll}
\dfrac{dV(t)}{dt}&=&\displaystyle - V(t) f\left(\int x^pu\right)\int_{0}^{\infty} \tau(x) u(t,x) \; dx - \delta V(t) + \lambda\,,
\vspace{.2cm}\\
\dfrac{\partial}{\partial t} u(t,x) &=& \displaystyle - V(t)f\left(\int x^pu\right) \frac{\partial}{\partial x} \big(\tau(x) u(t,x)\big) - \mu\, u(t,x) + {\mathcal F} u(t,x),
\end{array} \right.
\end{equation}
where $p\geq0$ and $f:\R_+\to\R_+$ is a differentiable function.
In this framework, the model of \cite{Greer2} corresponds to $p=1$ and $f(P)=\f{1}{1+\omega P},$ together with $\tau$ constant, $\beta$ linear, and $\kappa$ constant.
Using the reduction method to ODEs, we prove that such a system can exhibit periodic solutions.
For this we consider the following system, where $M_p=M_p[\U],$
\begin{equation}
\label{eq:prionoscill}
\left\{
\begin{array}{rll}
\dfrac{dV(t)}{dt}&=&\displaystyle - V(t) f\left(\mu^{-kp}\f{M_1}{M_p}\int x^pu\right)\int_{0}^{\infty} x\, u(t,x) \; dx - \delta V(t) + \lambda\,,
\vspace{.2cm}\\
\dfrac{\partial}{\partial t} u(t,x) &=& \displaystyle - V(t)f\left(\mu^{-kp}\f{M_1}{M_p}\int x^pu\right) \frac{\partial}{\partial x} \big(x\, u(t,x)\big) - \mu\, u(t,x) + {\mathcal F}_\gamma u(t,x),
\end{array} \right.
\end{equation}
which is a particular case of System~\eqref{eq:Webbgen}, with coefficients satisfying the assumptions of Theorem~\ref{th:nu1} and up to a dilation of $f.$
We prove that, under Assumption~\eqref{as:fandgprion} on the incidence function $f,$ there exist values of the coefficients for which System~\eqref{eq:prionoscill} admits nontrivial periodic solutions.
This result is stated in the following theorem, which is a more detailed version of Theorem~\ref{th:prion}.

\medskip

\begin{theorem}\label{th:oscillprion}
Under the assumptions of Theorem~\ref{th:prion}, there exists $p>0$ for which Equation~\eqref{eq:prionoscill} admits a solution of the form
$$\bigl(V(t),\,u(t,x)=Q(t)\,\U(W(t);x)\bigr),$$
with $V,$ $W,$ and $Q$ nontrivial periodic functions.
\end{theorem}

\

\begin{proof}

\noindent{\bf First step: reduced dynamic in ${\mathcal E}$}\\
We look at the dynamic of System~\eqref{eq:prionoscill} on the invariant eigenmanifold ${\mathcal E}.$
For any initial condition $u_0$ in ${\mathcal E},$ there exist $Q_0$ and $W_0$ such that $u_0$ writes as
$$u_0(x)=\f{1}{M_1}Q_0\,\U(W_0;x).$$
Consider the solution to System~\eqref{eq:prionoscill} corresponding to this initial data and define $W$ as the solution to
\begin{equation}\label{eq:systODEprion1}
\left\{\begin{array}{cll}
\dot W&=&\dis\f Wk\left(f\left(\mu^{-kp}\f{M_1}{M_p}\int x^pu\right)V- W\right),
\vspace{.2cm}\\
\dot W(0)&=&W_0.
\end{array}\right.
\end{equation}
Then we know from Theorem~\ref{th:nu1} that the solution satisfies
$$u(t,x)=\f{Q_0}{M_1}\U(W(t);x)e^{\int_0^t(W(s)-\mu)\,ds},$$
which allows one to compute
$$\int_0^\infty x^pu(t,x)\,dx=Q_0\f{M_p}{M_1}W^{kp}e^{\int_0^t(W(s)-\mu)\,ds}$$
and
$$\int_0^\infty x\,u(t,x)\,dx=Q_0W^k\,e^{\int_0^t(W(s)-\mu)\,ds}.$$
Thus, defining $Q(t):=Q_0\,e^{\int_0^t(W(s)-\mu)\,ds},$ Equation~\eqref{eq:systODEprion1} becomes
$$\dot W=\f Wk\left(f\left((\mu^{-1}W)^{kp}Q\right)V-W\right),$$
and System~\eqref{eq:prionoscill} reduces to
\begin{equation}\label{eq:systODE4}
\left\{\begin{array}{rcl}
\dot V&=&\dis\lambda-V\left(\delta+f\left((\mu^{-1}W)^{kp}Q\right)W^k Q\right),
\vspace{.2cm}\\
\dot W&=&\dis\f Wk\left(f\left((\mu^{-1}W)^{kp}Q\right)V-W\right),
\vspace{.2cm}\\
\dot Q&=&\dis Q\left(W-\mu\right).
\end{array}\right.
\end{equation}
Now we prove that System~\eqref{eq:systODE4} admits a unique nontrivial steady state which undergoes a supercritical Hopf bifurcation when $p$ increases from $0.$ 

\

\noindent{\bf Second step: Hopf bifurcation for the reduced system}\\
First we look for a positive steady state of System~\eqref{eq:systODE4}.
Such a steady state is unique and given by
$$W_\infty=\mu,$$
$$V_\infty=\f{1}{\delta}\left(\lambda-\mu^{k+1}Q_\infty\right),$$
where $Q_\infty$ satisfies
$$f\left(Q_\infty\right)=\f{\delta\mu}{\lambda-\mu^{k+1}Q_\infty}=:g(Q_\infty).$$
Such a $Q_\infty$ exists and is unique by Assumption~\eqref{as:fandgprion}, and moreover it satisfies $\lambda-\mu^{k+1}Q_\infty>0$ and so $V_\infty$ is positive.
Finally, there exists a unique positive steady state.
Now the method consists in considering the power $p$ as a bifurcation parameter and to prove that the unique positive steady state undergoes a supercritical Hopf bifurcation when $p$ increases.
The linear stability of the steady state is given by the eigenvalues of the Jacobian matrix
$$Jac_{eq}=\left(\begin{array}{ccc}
-\delta-\mu^kQf(Q) & -k\mu^kVQ(f(Q)+pQf'(Q)) & -\mu^kV(f(Q)+Qf'(Q))\\
\f{\mu}{k} f(Q) & p\mu VQf'(Q)-\f{\mu}{k} & \f{\mu}{k}Vf'(Q)\\
0 & Q & 0
\end{array}\right),$$
where the $_\infty$ indices are suppressed for the sake of clarity.
The trace of this matrix is
$$T=-\delta - \mu^kQf(Q)-\f\mu k+p\mu VQf'(Q),$$
which is negative for $p<p_1$ and positive for $p>p_1$ with
$$p_1:=\f{\delta+\mu/k+\mu^kQf(Q)}{\mu VQf'(Q)}>0.$$
The determinant is
$$D=\f{\mu}{k}VQ\left(\delta f'(Q)-\mu^kf^2(Q)\right).$$
It is independent of $p$ and negative since $f'(x_0)<g'(x_0)$ and
$$g'(x_0)=\f{\delta\mu^{k+2}}{(\lambda-\mu^{k+1}x_0)^2}=\f{\mu^k}{\delta}f^2(x_0).$$
The sum of the three $2\times2$ principal minors is
$$M=-\delta pVQf'(Q)+\f{\mu\delta}{k}+\mu^k\Bigl(\mu+\f1k\Bigr) Qf(Q)-\f\mu k VQf'(Q).$$
To use the Routh-Hurwitz criterion, let define $\psi(p):=MT-D$ and look at its sign.
For $p=0$ we have
\begin{align*}\psi(0)=-\f\mu k\biggl[\delta^2+\delta Qf(Q)+&\f{\mu\delta}{k}+\left(\delta(k+\mu^{-1})-\mu\right)\mu^kQf(Q)\\
&+VQ\left(\mu^{k-1}(k+\mu^{-1})f^2(Q)-f'(Q)\right)\left(\mu^kQf(Q)+\f\mu k\right)\biggr],
\end{align*}
and it is negative since $\mu\leq(k+\mu^{-1})\delta$ and $f'(Q)<g'(Q)=\f{\mu^k}\delta f^2(Q)\leq\mu^{k-1}(k+\mu^{-1})f^2(Q).$
For $p=p_1,$ it is positive because $\psi(p_1)=-D>0.$
Now we investigate the variations of $\psi$ between $0$ and $p_1.$
The first derivative of $T,$ $D,$ and $M$ are given by
$$T'(p)=\mu VQf'(Q),\quad M'(p)=-\delta VQf'(Q),\quad D'(p)=0,$$
and the second derivatives are all null:
$$T''(p)=M''(p)=D''(p)=0.$$
So we have
$$\psi''(p)=2M'(p)T'(p)<0$$
and $\psi$ is concave.
Thus there exists a unique $p_0\in(0,p_1)$ such that $\psi(p_0)=0.$
Now we can use the Routh-Hurwitz criterion (see \cite{hofbauer} for instance).
For $0\leq p<p_0$ we have $T<0,$ $D<0,$ and $MT<D,$ so the steady state is linearly stable with one real negative eigenvalue and two complex conjugate eigenvalues with a negative real part.
For $p_0<p<p_1$ we have $T<0,$ $D<0,$ and $MT>D,$ so the steady state is linearly unstable with one real negative eigenvalue and two complex conjugate eigenvalues with a positive real part.
The two conjugate eigenvalues cross the imaginary axis when $p=p_0$ so there is a Hopf bifurcation at this point.
To prove that a periodic solution appears with this bifurcation, it remains to check that the complex eigenvalues cross the imaginary axis with a positive speed (see \cite{Francoise} for instance).
Denote by $a\pm ib$ the two conjugate eigenvalues and $c<0$ the real one.
We have to prove that the derivative $a'(p_0)>0.$ 
For this we express $\psi(p)$ in terms of $a(p),$ $b(p),$ and $c(p),$ and we use the concavity of $\psi.$
We have for any $p$
$$T=2a+c,\quad D=c(a^2+b^2),\quad M=a^2+b^2+2ac,$$
so
$$\psi(p)=2a(a^2+b^2)+4a^2c+2ac^2.$$
Then, using that $a(p_0)=0$ by the definition of $p_0,$ we obtain
$$\psi'(p_0)=2(b^2+c^2)a'.$$
But $\psi'(p_0)>0$ because $\psi$ is concave and increasing on a neighbourhood of $p_0,$ so necessarily $a'(p_0)>0.$
This proves the existence of a periodic solution $(V,W,Q)$ to System~\eqref{eq:systODE4} for a parameter $p\geq p_0$ close to $p_0.$
Then the functions $V(t)$ and $Q(t)\,\U(W(t),x)$ are periodic and solve System~\eqref{eq:prionoscill}.
\end{proof}

\

To know whether such a periodic solution is stable is difficult, even for the reduced dynamics~\eqref{eq:systODE4}.
Nevertheless we give in Figure~\ref{fig:oscillations_prion} evidence that it should be the case.
This simulation is made with parameters and a function $f$ satisfying Assumption~\eqref{as:fandgprion}, for a value of parameter $p>p_0.$
It seems to indicate that the periodic solution persists for $p$ away from $p_0.$

\

\begin{figure}[h]

\begin{minipage}{0.49\textwidth}
\includegraphics[width=\textwidth]{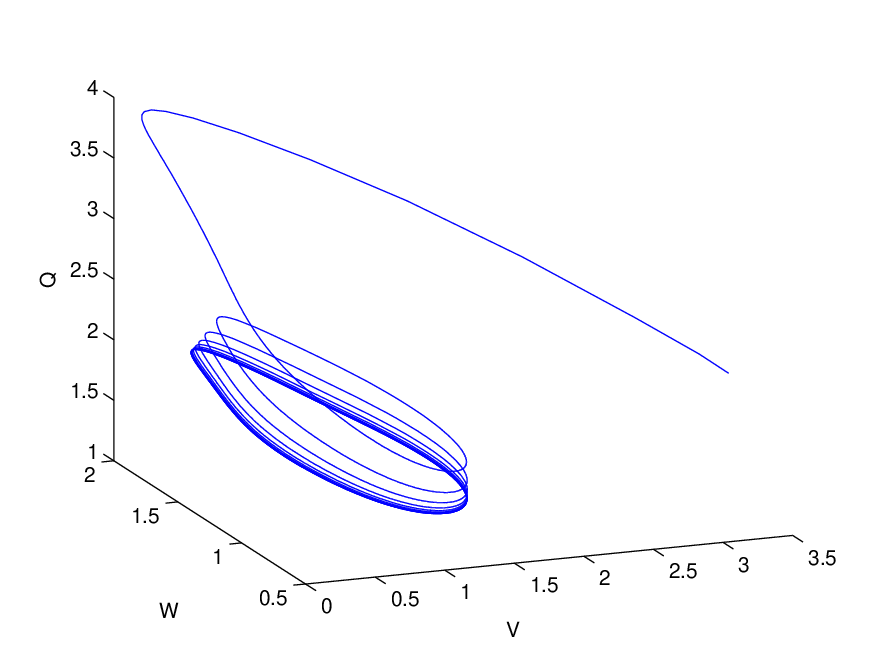}
\end{minipage}\hfill
\begin{minipage}{0.49\textwidth}
\includegraphics[width=\textwidth]{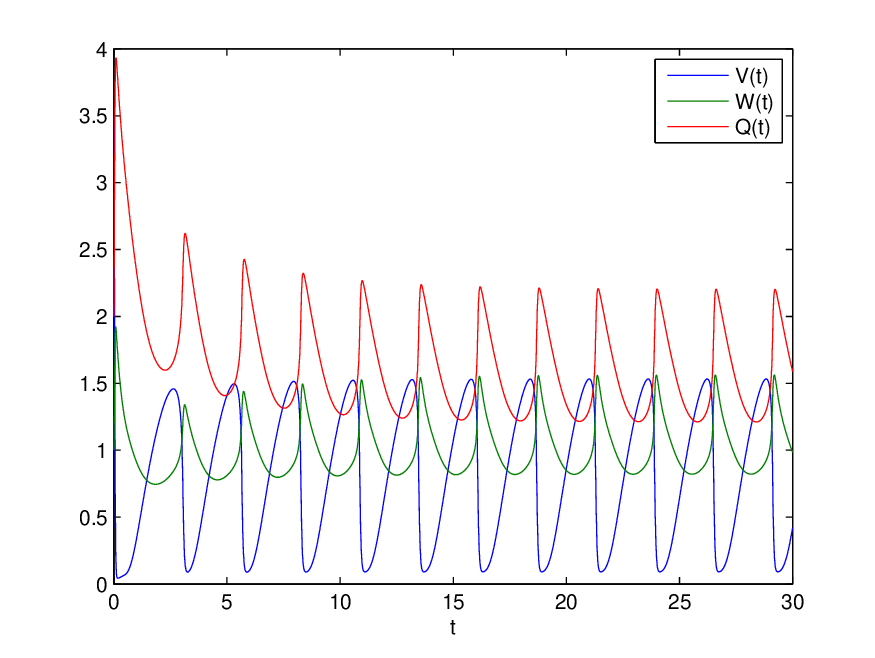}
\end{minipage}

\

\caption{A solution to System~\eqref{eq:systODE4} is plotted in the phase plane $(W,Q)$ (left) and as a function of the time (right).
The coefficients are $\lambda=0.9,$ $\delta=0.2,$ $\mu=\gamma=1,$ $f(x)=6.3(1.1-e^{\f{-x^2}{20}}),$ and $p=4.$}\label{fig:oscillations_prion}

\end{figure}

\

\section{Comparison between Perron and Floquet Eigenvalues}\label{sec:Floquet}

In this Section, we assume that the time-dependent terms $V(t)$ and $R(t)$ of the growth-fragmentation equation are $T$-periodic controls.
Periodic controls are usually used in structured equations to model optimization problems.
In the case of prion diseases (see Section~\ref{sec:prion}), there exists an amplification protocol called PMCA
(Protein Misfolded Cyclic Amplification; see \cite{Lenuzza} and references therein for more details)
which consists of periodically {\it sonicating} a sample of prion polymers in order the break them into smaller ones and thus increase their quantity.
Between these phases of sonication, the sample is flooded with a large quantity of monomers in order to allow a fast polymerization process.
This protocol can be modeled by introducing in the growth-fragmentation equation a periodic control in front of the fragmentation operator \cite{CDG,CalvezGabriel,GabrielPhD}.
Then a problem is to find a periodic control which maximizes the proliferation rate of the polymers in the sample.
Mathematically this leads to the problem of optimizing the Floquet eigenvalue of the growth-fragmentation equation, namely the eigenvalue associated to periodic coefficients (see~\cite{BP} for instance).
Before solving this difficult question, a first step is to compare the Floquet eigenvalue to the Perron eigenvalue associated to constant coefficients,
for instance the mean value of the periodic control, and to know whether the Floquet one can be better than the Perron one.
Such concerns are also investigated in the context of circadian rhythms for the optimization of chronotherapy (see \cite{Lepoutre,Lepoutre2,Clairambault}).
The population is an age structured population of cells and the model is a system of renewal equations.
The death and birth rates are assumed to be periodic, and the Floquet eigenvalue is compared to the Perron eigenvalue associated to geometrical or arithmetical time average of the periodic coefficients.
Comparison of results obtained show that the Floquet eigenvalue can be greater or less than the Perron one depending on parameters.

Here the controls are on the growth and death coefficients, and we compare results between Floquet and Perron eigenvalues in the case where $\nu=1$ or $\nu=0$ and $\gamma=1.$
The Floquet eigenelements $(\Lambda_F,\U_F)$ associated to periodic controls are defined by two properties:
$\U_F(t,x)e^{\Lambda_Ft}$ is a solution to Equation~\eqref{eq:Floquet} and $\U_F(t,x)$ is a $T$-periodic function of the time.
For any $T$-periodic function $P(t),$ we use the notation
$$\overline P:=\f1T\int_0^T P(t)\,dt.$$
To ensure the uniqueness of Floquet eigenfunction, we impose $\overline{\int_0^\infty \U_F(t,x)\,dx}=1.$
Then we have the following comparison results.

\medskip

\begin{proposition}
Assume that conditions~\eqref{as:powerlaw}-\eqref{as:selfsimfrag} are satisfied with $\nu=1,$ and define
from $V(t)$ the $T$-periodic control on growth, $W(t),$ as the periodic solution to
\beq\label{eq:ODEWF}\dot W=\f {\tau W}k(V-W).\eeq
Then
\beq\label{eq:FloquetU}\exists C>0,\qquad\U_F(t,x)=C\,\U(W(t);x)\,e^{\int_0^t(\Lambda(W(s),R(s))-\Lambda_F)\,ds},\eeq
and
\beq\label{eq:FloquetLb}\Lambda_F=\Lambda(\overline V, \overline R)=\overline{\Lambda(V,R)}.\eeq
\end{proposition}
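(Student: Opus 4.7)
The plan is to construct the Floquet eigenfunction as a direct application of Corollary~\ref{co:nu1}: take $W$ to be a $T$-periodic positive solution of the logistic ODE~\eqref{eq:ODEWF} (which for $\nu=1$ coincides with~\eqref{eq:WODE2}), and use the associated explicit solution to Equation~\eqref{eq:Floquet} as the eigenfunction, after identifying the correct exponential factor to strip off.

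The first step is the existence of such a $W$. The change of variable $Y=1/W$ turns~\eqref{eq:ODEWF} into the linear equation $\dot Y=-\frac{\tau}{k}V\,Y+\frac{\tau}{k}$; since $V>0$ is $T$-periodic, the monodromy factor $e^{-\frac{\tau}{k}\int_0^T V}$ lies strictly in $(0,1)$, so the condition $Y(T)=Y(0)$ uniquely determines a positive periodic $Y(0)$, hence a unique positive $T$-periodic $W$. Feeding this $W$ into Corollary~\ref{co:nu1} yields a solution
\[ u(t,x)=\U(W(t);x)\,e^{\int_0^t\Lambda(W(s),R(s))\,ds} \]
to Equation~\eqref{eq:Floquet}. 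Factoring $e^{\Lambda_F t}$ out of $u$ and defining $\U_F$ by~\eqref{eq:FloquetU} then gives the candidate Floquet eigenfunction: since $W$ is $T$-periodic the factor $\U(W(t);x)$ is $T$-periodic in $t$, and the remaining exponential is $T$-periodic exactly when $\int_0^T(\Lambda(W(s),R(s))-\Lambda_F)\,ds=0$, i.e.\ when $\Lambda_F=\overline{\Lambda(W,R)}$.

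To identify this value with $\Lambda(\overline V,\overline R)$ and with $\overline{\Lambda(V,R)}$, I would invoke the computation already made in the proof of Corollary~\ref{co:nu1}: for $\nu=1$ one has $\Lambda(1,0)=\tau$, so the self-similarity~\eqref{eq:VRdependency} collapses to the linear formula $\Lambda(V,R)=\tau V-\mu R$. Linearity gives $\overline{\Lambda(V,R)}=\Lambda(\overline V,\overline R)=\tau\overline V-\mu\overline R$ for free. The one genuinely nontrivial step is the identity $\overline W=\overline V$: dividing~\eqref{eq:ODEWF} by $W>0$ and integrating over $[0,T]$ gives
\[ 0=\bigl[\ln W\bigr]_0^T=\frac{\tau T}{k}(\overline V-\overline W), \]
the left-hand side vanishing by $T$-periodicity of $W$. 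Chaining the equalities yields~\eqref{eq:FloquetLb}, and the normalization $\overline{\U}_F=1$ is then enforced by multiplying $\U_F$ by the appropriate positive constant. I do not anticipate a real obstacle here: the proposition is essentially a reformulation of Corollary~\ref{co:nu1} in the $T$-periodic setting, the log-derivative trick on the logistic equation providing the only quantitative ingredient beyond what has already been established.
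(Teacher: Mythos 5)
Your proof is correct and follows essentially the same route as the paper: both invoke Corollary~\ref{co:nu1} to produce the explicit solution $\U(W(t);x)\,e^{\int_0^t\Lambda(W(s),R(s))\,ds}$, both extract $\Lambda_F=\overline{\Lambda(W,R)}$ from $T$-periodicity of the Floquet eigenfunction, and both reduce this to $\Lambda(\overline V,\overline R)$ via linearity of $\Lambda(\cdot,\cdot)$ in the $\nu=1$ case and the log-derivative identity $\int_0^T(V-W)=0$. The only addition you make is the existence of the positive periodic $W$ (via the Bernoulli substitution $Y=1/W$), which the paper takes for granted in the statement; this is harmless and, if anything, slightly more complete.
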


\

\begin{proof}
Due to Corollary~\ref{co:nu1}, we have that
$$\exists\, C>0,\qquad\U_F(t,x)e^{\Lambda_Ft}=C\,\U(W(t);x)e^{\int_0^t\Lambda(W(s),R(s))\,ds}$$
and so, integrating in $x,$ we find
$$e^{\int_0^t\Lambda(W(s),R(s))\,ds-\Lambda_Ft}=C^{-1}\int_0^\infty\U_F(t,x)\,dx.$$
Then, by periodicity, we have
$$e^{\int_0^T\Lambda(W(s),R(s))\,ds-\Lambda_FT}=1,$$
which gives
$$\Lambda_F=\f1T\int_0^T\Lambda(W(s),R(s))\,ds=\f1T\int_0^T (\tau W(s)-\mu R(s))\,ds.$$
Due to ODE~\eqref{eq:ODEWF} we have
$$\int_0^TV-W=0,$$
and so
$$\Lambda_F=\f1T\int_0^T (\tau V(s)-\mu R(s))\,ds=\f1T\int_0^T\Lambda(V(s),R(s))\,ds.$$
\end{proof}

\

In the case $\nu=0$ and $\gamma=1,$ we cannot ensure the existence of Floquet eigenelements with our method.
Nevertheless, due to Corollary~\ref{co:nu0gamma1}, we can compare the eigenvalues of the reduced system~\eqref{eq:ODEclosed}, which is satified by $M_0[u]$ and $M_1[u].$

\medskip

\begin{proposition}
Assume that $\tau(x)=\tau,$ $\beta(x)=\beta x,$ $\mu(x)=\mu,$ and that $\kappa$ is symmetric.
Then we have the comparison
\beq\label{eq:FloPer}\overline{\Lambda(V,R)}\leq\Lambda_F\leq\Lambda(\overline V,\overline R).\eeq
\end{proposition}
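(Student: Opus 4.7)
My plan is to recast the inequality as a statement about the dominant Floquet exponent $\Lambda_F$ of the periodic $2\times 2$ linear system~\eqref{eq:ODEclosed} furnished by Corollary~\ref{co:nu0gamma1}, and then read it off a scalar Riccati equation. The matrix in~\eqref{eq:ODEclosed} has nonnegative off-diagonal entries (a periodic cooperative system) and is irreducible because $\beta>0$ and $\tau V(t)>0$. Hence its monodromy is an entrywise positive $2\times 2$ matrix, to which Perron-Frobenius yields a positive dominant eigenvalue $e^{\Lambda_F T}$ together with a positive eigenvector. Propagating that eigenvector produces a solution of the form $(P_0(t),P_1(t))^{T}e^{\Lambda_F t}$ with $P_0,P_1$ strictly positive and $T$-periodic.

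Substituting this ansatz into~\eqref{eq:ODEclosed} and setting $c(t):=P_1(t)/P_0(t)>0$ (which is $T$-periodic) one obtains
\begin{equation*}
\Lambda_F=-\mu R(t)+\beta c(t)-(\ln P_0)'(t),\qquad \dot c=\tau V(t)-\beta c(t)^{2}.
\end{equation*}
Averaging the first identity over one period kills the total-derivative term, giving the representation
\begin{equation*}
\Lambda_F=-\mu\overline R+\beta\overline c.
\end{equation*}
This reduces the proposition to proving $\sqrt{\tau\beta}\,\overline{\sqrt V}\le\beta\overline c\le\sqrt{\tau\beta\overline V}$.

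For the upper bound, averaging the Riccati equation and using periodicity of $c$ yields $\beta\,\overline{c^{2}}=\tau\overline V$, and Jensen's inequality $(\overline c)^{2}\le\overline{c^{2}}$ gives $\beta(\overline c)^{2}\le\tau\overline V$, i.e. $\Lambda_F\le\Lambda(\overline V,\overline R)$. For the lower bound I will apply concavity of the square root in the form $\sqrt{a+b}\le\sqrt a+b/(2\sqrt a)$ with $a=\beta c^{2}$ and $b=\dot c$ (both admissible since $c>0$ and $\tau V=\dot c+\beta c^{2}>0$):
\begin{equation*}
\sqrt{\tau V}=\sqrt{\dot c+\beta c^{2}}\le\sqrt\beta\,c+\frac{\dot c}{2\sqrt\beta\,c}.
\end{equation*}
Averaging kills the remainder term because $\dot c/c=(\ln c)'$ is the derivative of a periodic function; multiplying by $\sqrt\beta$ then gives $\overline{\sqrt{\tau\beta V}}\le\beta\overline c$, which rearranges to $\overline{\Lambda(V,R)}\le\Lambda_F$.

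The only genuinely delicate step is the first one: justifying the positive periodic Floquet eigenvector so that the Riccati reduction and the representation $\Lambda_F=-\mu\overline R+\beta\overline c$ are legitimate. Perron-Frobenius for cooperative periodic systems (equivalently, shifting $A(t)\mapsto A(t)+c_{0}I$ with $c_{0}$ large so that $A(t)+c_{0}I\geq 0$ entrywise, applying Perron-Frobenius to the nonnegative monodromy, and shifting back) handles this cleanly. Once the representation is established, both inequalities are essentially one instance each of Jensen applied to $x\mapsto x^{2}$ and $x\mapsto\sqrt x$ respectively, and equality holds iff $V$ is constant.
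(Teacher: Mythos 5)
Your proof is correct, and it takes a genuinely different route from the paper's. The paper's argument goes through Corollary~\ref{co:nu0gamma1} and the self-similar structure: it sets $W$ to be the periodic solution of $\dot W=\frac{\Lambda(W,0)}{k}(V-W)$ (which with $\nu=0,\gamma=1$ reads $\dot W=2\sqrt{\tau\beta}\sqrt W(V-W)$), derives $\Lambda_F=\overline{\sqrt{\beta\tau W}}-\mu\overline R$, and then obtains the two inequalities from the zero-average identities $\int_0^T\dot W/W=0$ and $\int_0^T\dot W/\sqrt W=0$ combined with Cauchy--Schwarz (lower bound) and Jensen (upper bound). You instead treat~\eqref{eq:ODEclosed} as a cooperative periodic $2\times2$ system, invoke Perron--Frobenius on the monodromy to produce the positive periodic Floquet eigenvector, and reduce to the Riccati equation $\dot c=\tau V-\beta c^2$ for $c=P_1/P_0$, getting $\Lambda_F=-\mu\overline R+\beta\overline c$ and $\beta\overline{c^2}=\tau\overline V$. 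Your $c$ and the paper's $W$ are the same object in disguise (one checks $c=\sqrt{\tau W/\beta}$ intertwines the two scalar ODEs), so the averaging identities are equivalent; but your derivation is more self-contained, since the paper never actually establishes existence of the positive periodic $W$ whereas your Perron--Frobenius step does exactly that. Your upper-bound argument is the same Jensen step, and your lower-bound argument (tangent-line concavity of $\sqrt{\cdot}$ applied to $\tau V=\beta c^2+\dot c$, killing the logarithmic-derivative remainder by periodicity) is a clean substitute for the paper's Cauchy--Schwarz with the weights $\sqrt V/W^{1/4}$ and $W^{1/4}$; both are instances of convexity and both yield the sharp constant, with equality iff $V$ is constant.
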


\

\begin{proof}
Define $W$ as the periodic solution to
$$\dot W=\f{\Lambda(W,0)}{k}(V-W).$$
We know thanks to Corollary~\ref{co:nu0gamma1} that
$${\mathcal M}_0[W](t)=M_0[\U] e^{\int_0^t\Lambda(W(s),R(s))\,ds}$$
and
$${\mathcal M}_1[W](t)=M_1[\U] W^{k}(t) e^{\int_0^t\Lambda(W(s),R(s))\,ds}$$
solve System~\eqref{eq:ODEclosed}.
As a consequence
$$\Lambda_F=\f1T\int_0^T\Lambda(W(s),R(s))\,ds=\f1T\int_0^T(\sqrt{\beta\tau W(s)}-\mu R(s))\,ds.$$
Using the ODE satisfied by $W,$ we have
$$0=\int_0^T\f{\dot W}{W}=\f{\sqrt{\beta\tau}}{k}\int_0^T\f{V-W}{\sqrt{W}},$$
and we obtain that
$$\int_0^T\sqrt{W}=\int_0^T\f{V}{\sqrt{W}}.$$
Then the Caucy-Schwartz inequality gives
$$\int_0^T\sqrt{V}=\int_0^T\f{\sqrt{V}}{W^{\f14}}W^{\f14}\leq\sqrt{\int_0^T\f{V}{\sqrt{W}}}\sqrt{\int_0^T\sqrt{W}}=\int_0^T\sqrt{W},$$
and so
$$\f1T\int_0^T\Lambda(V,R)=\f1T\int_0^T\sqrt{\beta\tau V}-\mu R\leq\f1T\int_0^T\sqrt{\beta\tau W}-\mu R=\f1T\int_0^T\Lambda(W,R)=\Lambda_F.$$
To obtain the second inequality in \eqref{eq:FloPer} we write, using the ODE satisfied by $W,$
$$0=\int_0^T\f{\dot W}{\sqrt{W}}=\f{\sqrt{\beta\tau}}{k}\int_0^T V-W,$$
and so
$$\int_0^T V=\int_0^T W.$$
Thus we have, using the Jensen inequality,
$$\f1T\int_0^T\sqrt{W(s)}\,ds\leq\sqrt{\f1T\int_0^TW(s)\,ds}=\sqrt{\f1T\int_0^TV(s)\,ds},$$
and finally
$$\Lambda_F=\f1T\int_0^T(\sqrt{\beta\tau W(s)}-\mu R(s))\,ds\leq\sqrt{\f1T\int_0^T\beta\tau V(s)\,ds}-\f1T\int_0^T \mu R(s)\,ds=\Lambda(\overline V,\overline R).$$
\end{proof}

\

\section*{Conclusion and Perspectives}

We have introduced a new reduction method to investigate the long-time behavior of some nonlinear growth-fragmentation equations.
It allowed us to prove convergence and stability results when there is only one nonlinearity in the growth term,
and to prove the possible existence of nontrivial periodic solutions in cases when there are two competing nonlinearities.
The method is based on the study of exact solutions, whose existence requires powerlaw coefficients and a self-similar structure of the fragmentation kernel.
A further work would be to investigate more general growth-fragmentation equations for which no exact solution is available.

Consider for instance a generalization of Equation~\eqref{eq:nonlinear}, namely
\beq\label{eq:nonlinear_gen}\f{\p}{\p t} u(t,x) = - f\left(\int \psi(x) u(t,x)\,dx\right) \f{\p}{\p x} \bigl(\tau(x)\,u(t,x)\bigr) - \mu(x) u(t,x) + {\mathcal F} u(t,x),\eeq
where $\psi(x),\ \tau(x),\ \mu(x),$ and $\beta(x)$ are general positive functions, and $b(x,y)$ a general kernel without self-similar structure.
Is there convergence of any solution of Equation~\eqref{eq:nonlinear_gen} to a steady state?
This problem is a first step before tackling the same question for the original prion model~\eqref{eq:prion}, which is still an open problem for general coefficients.

Based on the study in this paper and on numerical simulations (see below),
we can conjecture that any bounded solution to Equation~\eqref{eq:nonlinear_gen} converges to a steady state (no oscillating solutions).
To ensure that any solution remains bounded, it should be sufficient to assume that
$$\limsup_{I\to\infty}\,\Lambda(f(I),1)<0,$$
which is a generalization of the second condition in Assumption~\eqref{as:f}.
To prove that, under this condition, all the solutions converge to a steady state, nonlinear entropy methods must be developed, which is a very challenging problem.

\

\underline{Numerical simulations.}
We choose coefficients which do not have the homogeneity of powerlaws and a fragmentation kernel which is not self-similar.
Then we numerically solve Equation~\eqref{eq:nonlinear_gen} and plot the quantity $\int_0^\infty\psi(x)u(t,x)\,dx$ along time for various initial distributions.
The convergence of this quantity to a constant (see Figure~\ref{fig:general}) indicates that the solution $u(t,x)$ converges to a steady state.
Indeed if the transport term is constant in time, we obtain a linear equation and the General Relative Entropy ensures the convergence of the solution to an eigenfunction.

\begin{figure}[h]
\centering\includegraphics[width=.58\textwidth]{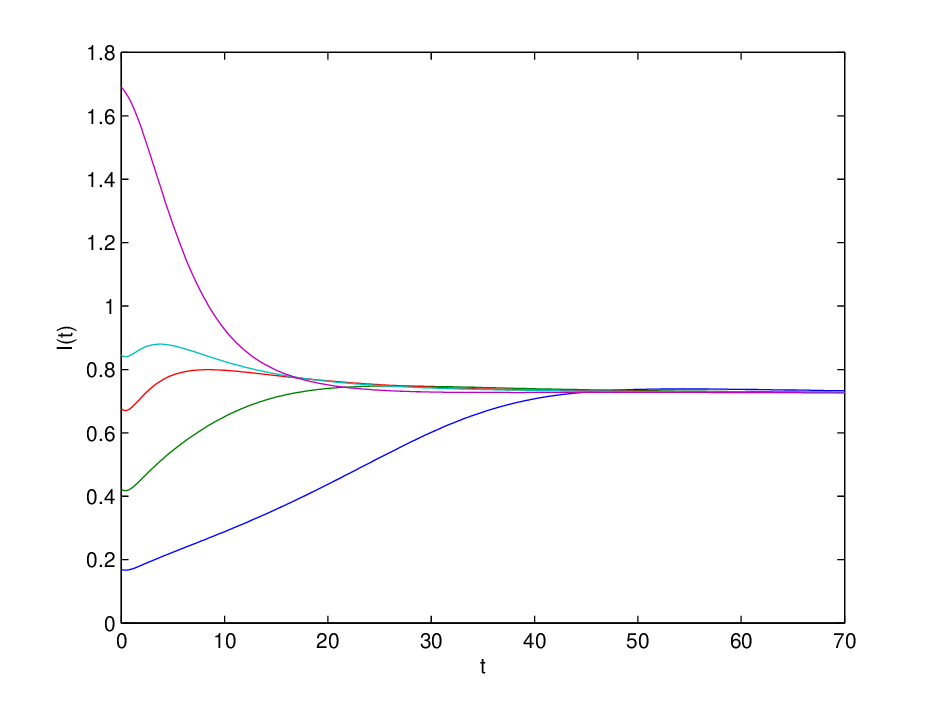}

\

\caption{We plot the evolution of $I(t):=\int_0^\infty\psi(x)u(t,x)\,dx$ for solutions $u(t,x)$ to Equation~\eqref{eq:nonlinear_gen} associated to various initial distributions.
The coefficients are $f(I)=1+\exp(-I^4),$ $\psi(x)=1+\sin(x),$ $\tau(x)=\f{x^2}{1+x^2},$ $\mu(x)=\ln(1+x),$ $\beta(x)=x\ln(1+x),$
and $b(x,y)=\beta(x)\f{1}{\sqrt{\pi}\text{erf}\left(\f x2\right)}\exp\left(-\left(x-\f y2\right)^2\right),$ where erf is the error function.
The fragmentation kernel chosen like this is equivalent to the homogeneous fragmentation $b_0(x,y)=\f{\beta(x)}{x}$ when $x\to0,$
and equivalent to the mitosis kernel $b_\infty(x,y)=\beta(x)\delta_{x=2y}$ when $x\to+\infty,$ and thus it is not self-similar.
We see that for any initial distribution, the $\psi th$-moment $I(t)$ converges to a constant.}\label{fig:general}

\end{figure}

\bigskip

{\bf Acknowledgements}

The author would like to thank Jean-Pierre Fran\c{c}oise for helpful discussions about limit cycles,
and Philippe Lauren\c cot for his corrections and suggestions.

\

%
%

\bibliographystyle{abbrv}
\bibliography{Prion}

\end{document}